\newcommand{\footremember}[2]{%
    \footnote{#2}
    \newcounter{#1}
    \setcounter{#1}{\value{footnote}}%
}
\newcommand{\footrecall}[1]{%
    \footnotemark[\value{#1}]%
}
\definecolor{myblue}{rgb}{0,0.08,0.45}
\DeclareMathOperator*{\argmin}{arg\,min}
\DeclareMathOperator*{\dist}{dist}
\DeclareMathOperator*{\diag}{diag}
\definecolor{mypurple}{rgb}{0.67, 0.12, 0.47}
\newcommand{\jd}[1]{{\color{mypurple}{\textbf{JD:} #1}}}
\newcommand{\xc}[1]{{\color{brown}{\textbf{XC:} #1}}}
\newtheorem{theorem}{Theorem}
\newtheorem{remark}{Remark}
\newtheorem{assumption}{Assumption}
\def\1{\bm{1}}
\def\rr{{\mathbb{R}}}
\def\vzero{{\bm{0}}}
\def\vg{{\bm{g}}}
\def\vx{{\bm{x}}}
\def\vy{{\bm{y}}}
\def\mA{{\bm{A}}}
\def\mI{{\bm{I}}}
\def\mQ{{\bm{Q}}}
\def\mLambda{{\bm{\Lambda}}}
\def\mQh{\widehat{\bm{Q}}}
\def\mQt{\widetilde{\bm{Q}}}
\DeclareMathAlphabet{\mathsfit}{\encodingdefault}{\sfdefault}{m}{sl}
\SetMathAlphabet{\mathsfit}{bold}{\encodingdefault}{\sfdefault}{bx}{n}
\def\gB{{\mathcal{B}}}
\def\gD{{\mathcal{D}}}
\def\gF{{\mathcal{F}}}
\def\gS{{\mathcal{S}}}
\def\sR{{\mathbb{R}}}
\newcommand{\E}{\mathbb{E}}
\newcommand{\innp}[1]{\left\langle #1 \right\rangle}
\newcommand{\ee}{\mathbb{E}}
\title{Cyclic Block Coordinate Descent With Variance Reduction \\
for Composite Nonconvex Optimization}
\author{Xufeng Cai\footremember{wisc}{Department of Computer Sciences, University of Wisconsin-Madison. XC (\href{mailto:xcai74@wisc.edu}{xcai74@wisc.edu}), CS (\href{mailto:chaobing.song@wisc.edu}{chaobing.song@wisc.edu}), SJW (\href{mailto:swright@cs.wisc.edu}{swright@cs.wisc.edu}), JD (\href{mailto:jelena@cs.wisc.edu}{jelena@cs.wisc.edu}).}
\and Chaobing Song\footrecall{wisc}
\and Stephen J. Wright\footrecall{wisc}
\and Jelena Diakonikolas\footrecall{wisc}
}
\date{}
\begin{document}
\maketitle
% If your paper is accepted and the title of your paper is very long,
% the style will print as headings an error message. Use the following
% command to supply a shorter title of your paper so that it can be
% used as headings.
%
%\runningtitle{I use this title instead because the last one was very long}

% If your paper is accepted and the number of authors is large, the
% style will print as headings an error message. Use the following
% command to supply a shorter version of the authors names so that
% they can be used as headings (for example, use only the surnames)
%
%\runningauthor{Surname 1, Surname 2, Surname 3, ...., Surname n}

\begin{abstract}
    Nonconvex optimization is central in solving many machine learning problems, in which block-wise structure is commonly encountered. In this work, we propose cyclic block coordinate methods for nonconvex optimization problems with non-asymptotic gradient norm guarantees. Our convergence analysis is based on a gradient Lipschitz condition with respect to a Mahalanobis norm, inspired by a recent progress on cyclic block coordinate methods. In deterministic settings, our convergence guarantee matches the guarantee of (full-gradient) gradient descent, but with the gradient Lipschitz constant being defined w.r.t.~a Mahalanobis norm. In stochastic settings, we use recursive variance reduction to decrease the per-iteration cost and match the arithmetic operation complexity of current optimal stochastic full-gradient methods, with a unified analysis for both finite-sum and infinite-sum cases. We prove a faster linear convergence result when a Polyak-{\L}ojasiewicz (P{\L}) condition holds. To our knowledge, this work is the first to provide non-asymptotic convergence guarantees --- variance-reduced or not --- for a cyclic block coordinate method in general composite (smooth + nonsmooth) nonconvex settings. Our experimental results demonstrate the efficacy of the proposed cyclic scheme in training deep neural nets. 
    % Finally, our theoretical findings are supported by experiments that demonstrate the efficacy of the studied methods.
% The abstract paragraph should be indented 1/2~inch (3~picas) on both left and
% right-hand margins. Use 10~point type, with a vertical spacing of 11~points.
% The word \textsc{Abstract} must be centered, in small caps, and in point size 12. Two
% line spaces precede the abstract. The abstract must be limited to one
% paragraph.
\end{abstract}

\section{Introduction}

Exploiting structural information in machine learning (ML) problems is key to enabling optimization at extreme scale. 
Important examples of such structure are block separability, giving rise to block coordinate methods, and finite/infinite sum structure, giving rise to stochastic, possibly variance-reduced optimization methods. 
In this work, we explore both these types of structure to develop novel optimization methods with fast convergence. 

We focus on nonconvex optimization problems of the form
%
%Large-scale nonconvex optimization widely arises in recent trends of machine learning (ML). Due to the escalating scale of these problems, it is often expensive or intractable to compute full gradient information that most traditional first-order methods rely on at each iteration, thus asking for a more efficient way to (partially) update the problem variables with small amount of data. On the other hand, most modern ML models, notably deep neural networks, are highly nonconvex, which is  NP-hard to solve and may involve many bad local minima. Hence, how to attain an approximate solution with global convergence guarantee is of special interests. 
%
%In this paper, we study the following nonconvex problem: 
\begin{align}
\min_{\vx\in\sR^d} F(\vx)  = f(\vx) + r(\vx),     \label{eq:prob}
\end{align}
where 
% $\vx = \left[\substack{\vx^1 \\ \vx^2 \\ \vdots \\ \vx^m}\right]
$\vx \in\sR^d$ can be partitioned into $m$ disjoint blocks $\vx = (\vx^1, \dotsc, \vx^m)$ with $\vx^j\in \sR^{d_j}$ for $j \in [m]$ and $\sum_{j=1}^m d_j = d$; $f(\vx)$ is a smooth nonconvex function; $r(\vx) = \sum_{j=1}^m r^j(\vx^j)$ is block separable, extended-valued, closed convex function such that each $r^j(\cdot)$ (and thus the separable sum $r$) admits an efficiently computable proximal operator. 
We consider in particular  the finite-sum variant of \eqref{eq:prob}:
\begin{align}
\min_{\vx \in \rr^d} F(\vx) = f(\vx) +  r(\vx) = \frac{1}{n}\sum_{i=1}^n f_i(\vx) + r(\vx), \label{eq:sto-opt}
\end{align}
in which $f(\vx)$ is nonconvex and smooth and $n$ is usually very large. Without loss of generality, due to the central limit theorem, we use $n = +\infty$ in \eqref{eq:sto-opt} to refer to the following stochastic (infinite-sum) optimization setting: 
\begin{align}\label{pro:stoc}
\min_{\vx \in \rr^d} F(\vx) = \E_{\xi\sim\gD}[f(\vx; \xi)] + r(\vx),
\end{align}
where $\xi$ is a random variable from an unknown distribution $\gD$. 
Problems of the form~\eqref{eq:sto-opt} and \eqref{pro:stoc} commonly arise in machine learning, especially in (regularized, empirical, or population) risk minimization.  
 
\subsection{Motivation and Related Works}

Both block coordinate and variance-reduced stochastic gradient methods are prevalent in machine learning, due to their effectiveness in handling large problem instances; see e.g.,~\citet{gorbunov2020unified,wright2015coordinate,allen2016even,nesterov2012efficiency,allen2017katyusha,johnson2013accelerating,diakonikolas2018alternating,nakamura2021block,li2020page,beck2013convergence,hong2017iteration,xu2015block,chen2016accelerated} and references therein. 

Block coordinate  methods can be classified into three main categories according to the order in which blocks of coordinates are selected: 
(i) greedy, or Gauss-Southwell methods~\citep{nutini2015coordinate}, which in each iteration selects the block of coordinates that lead to the highest progress in minimizing the objective function; 
(ii) randomized block coordinate methods, which select blocks of coordinates at random (with replacement), according to some pre-defined probability distribution~\citep{nesterov2012efficiency}; and 
(iii) cyclic block coordinate methods, which update the coordinate blocks in a cyclic order~\citep{beck2013convergence}.
(A combination of (ii) and (iii) known as random-permutations methods uses a cyclic approach but randomly reshuffles the order in which the blocks are updated at the start of each cycle.)
Greedy methods can be quite effective in practice when their selection rule can be implemented efficiently, but they are applicable only to very specialized problems. 
Thus, most of the focus has been on randomized and cyclic methods. %\xc{In particular, the cyclic methods have been extensively used and studied in many areas such as variational inference~\citep{hoffman2013stochastic, blei2017variational, plummer2020dynamics}, non-negative matrix factorization~\citep{li2017provable,vandaele2016efficient}, $k$-means clustering~\citep{tzikas2008variational, nie2021coordinate} and phase retrieval~\citep{zeng2020coordinate}.}

From a theoretical standpoint, randomized methods have received much more attention than cyclic methods. The reason is that the randomly selected block of gradient coordinates can be related to the full gradient by taking the expectation, which allows their analysis to be reduced to the related to the analysis of standard first-order methods; see 
%% Indeed, this is the underlying idea guiding the convergence of randomized methods
\citet{nesterov2012efficiency,nesterov2017efficiency,allen2016even,diakonikolas2018alternating}. 
By contrast, cyclic methods are much more challenging to analyze, as it is unclear how to relate the partial gradient to the full one. 
Obtaining non-asymptotic convergence guarantees for such methods was initially considered nearly impossible~\citep{nesterov2012efficiency}. 
Despite much of the progress on the theoretical front~\citep{beck2013convergence,saha2013nonasymptotic,gurbuzbalaban2017cyclic,lee2019random,wright2020analyzing,li2017faster,sun2021worst}, most of the literature addressing cyclic methods deals with convex (often quadratic) objective functions and provides convergence guarantees that are typically worse by a factor polynomial in the dimension $d$ than the equivalent guarantees for randomized methods. 
For nonconvex objectives, there are few existing guarantees, and these require additional assumptions such as multiconvexity (i.e., that the function is convex over a coordinate block when other blocks of coordinates remain fixed) and the Kurdyka-{\L}ojasiewicz (K{\L}) property, or else provide convergence guarantees that are only asymptotic~\citep{xu2013block,xu2015block,xu2017globally, zeng2014cyclic}. 
On the other hand, the recent work by \citet{song2021fast} avoids the explicit dependence on the dimension by introducing a novel Lipschitz condition that holds w.r.t.~a Mahalanobis norm. 
This condition is the inspiration for the methods proposed in our work. 
The techniques in \citet{song2021fast} cannot be applied directly to the current context of composite nonconvex problems, as they address monotone variational inequalities. An entirely separate analysis framework is required, and is presented here.

% \xc{
% $l_q$ minimization with $0 < q < 1$ (convergence, but without rate) see e.g.,~\cite{zeng2014cyclic}

% As a special case of the CCD methods: ADMM with VR, see e.g.~\cite{zheng2016fast} using SVRG in convex settings;~\cite{driggs2020spring} using SAGA and SARAH with convergence rate. 

% Importance and Applications of CCD: coordinate ascent variational inference (CAVI) ~\citep{plummer2020dynamics}; EM algorithm and K-Means~\citep{tzikas2008variational}

% Cite the following paper for CAVI
% Variational Inference: A Review for Statisticians~\cite{blei2017variational}

% Also see Stochastic Variational Inference~\citep{hoffman2013stochastic}

% Alternating gradient descent for Non-negative Matrix Factorization~\citep{li2017provable}

% A very recent paper for Cyclic Coordinate Method (exactly solving subproblems) 
% Coordinate Descent Method for k-means (PAMI)~\citep{nie2021coordinate}

% Coordinate Descent Algorithms for Phase Retrieval (a nonconvex problem with convergence proof)~\citep{zeng2020coordinate}

% Cyclic coordinate descent (not just two blocks) for Nonnegative Matrix Factorization~\citep{vandaele2016efficient}

% A competitive paper to concern:

% Birgin, E. G., and J. M. Martínez. "Block coordinate descent for smooth nonconvex constrained minimization." Computational Optimization and Applications 83.1 (2022): 1-27.

% }

From the implementation viewpoint, randomized methods require generating pseudo-random numbers from a pre-defined probability distribution to determine which coordinate block should be selected in each iteration.
This operation may dominate the arithmetic cost when the coordinate update is cheap. 
Cyclic methods are simple, intuitive, and more efficient for implementation, and often demonstrate better empirical performance than the randomized methods~\citep{beck2013convergence, chow2017cyclic, sun2021worst}.
They are thus the default algorithms for many software packages such as SparseNet~\citep{mazumder2011sparsenet} and GLMNet~\citep{friedman2010regularization} in high-dimensional computational statistics and have found wide applications in areas such as variational inference~\citep{blei2017variational, plummer2020dynamics}, non-negative matrix factorization~\citep{vandaele2016efficient}, $k$-means clustering~\citep{nie2021coordinate}, and phase retrieval~\citep{zeng2020coordinate}. 
More recent literature has also sought to combine the favorable properties of stochastic optimization methods (such as SGD) with block coordinate updates; see, e.g.,~\citet{xu2015block, nakamura2021block, fu2020block, chen2016accelerated, lei2020asynchronous, wang2016randomized}), which address nonconvex problems of the form \eqref{eq:sto-opt} and \eqref{pro:stoc}. Compared with traditional stochastic gradient methods, which simultaneously update all variables using Gauss-Jacobi-style iterations, block-coordinate variants of stochastic gradient update the variables sequentially with Gauss-Seidel-style iterations, thus usually taking fewer iterations to converge (see e.g.,~\citet{xu2015block}). One common approach to further improve sample complexity in stochastic optimization is to use variance reduction, which for block coordinate methods in nonconvex settings has been done in~\citet{chen2016accelerated, chauhan2017mini, lei2020asynchronous}. However, to the best of our knowledge, non-asymptotic convergence results have only been established for randomized methods with variance reduction~\citep{chen2016accelerated, lei2020asynchronous}. 
We are not aware of work that incorporates variance reduction techniques with cyclic methods in nonconvex settings. Even in the {\em convex} setting, the only work we are aware of that combines variance reduction with a cyclic method is~\citet{song2021fast}, but this paper utilizes SVRG-style variance reduction, whose applicability in nonconvex settings is unclear.

\subsection{Contributions}
Our main contributions can be summarized as follows.

\paragraph{Proximal Cyclic block Coordinate Descent (P-CCD).} We provide a non-asymptotic convergence analysis for the standard P-CCD method in deterministic nonconvex settings, based on a Lipschitz condition w.r.t.~a Mahalanobis norm, inspired by the recent work by~\citet{song2021fast}. However, the techniques are completely disjoint and their results (for monotone variational inequalities) neither imply ours (for nonconvex minimization), nor the other way around. Our Lipschitz condition, which implies block (coordinate) smoothness, is more general. The comparison between the new Lipschitz condition and the standard (Euclidean-norm) Lipschitz condition is discussed in Section~\ref{sec:prelim}. 
We show that P-CCD has the same sublinear convergence rate as full gradient methods, and achieves linear convergence under a P{\L} condition.
To the best of our knowledge, these are the first such results for a cyclic method in the composite nonconvex setting~\eqref{eq:prob}, where standard tools such as monotonicity (convex inequalities) used in the earlier paper cannot be used to establish convergence.
%\xc{parameter-free version?}

\paragraph{Variance-Reduced P-CCD.} We propose a stochastic gradient variant of P-CCD with recursive variance reduction for solving nonconvex problems of the form~\eqref{eq:sto-opt} and \eqref{pro:stoc}. The recursive variance reduction technique of \citet{li2020page} was used prior to our work only in the full-gradient setting, and the extension to the cyclic block coordinate setting requires addressing nontrivial technical obstacles such as establishing a new potential function and controlling additional error terms arisen from the cyclic update rule.
We prove its non-asymptotic convergence using an analysis that unifies the finite-sum and infinite-sum settings, which also matches the arithmetic operation complexity of optimal stochastic full-gradient methods for nonconvex minimization. 
A faster, linear convergence rate is attained under a P{\L} condition. To our knowledge, our work is the first to incorporate variance reduction into cyclic methods in nonconvex settings while providing non-asymptotic convergence guarantees. 

\paragraph{Numerical Experiments.} We apply our proposed cyclic algorithms to train LeNet on the CIFAR-10 dataset, and compare them with SGD and the PAGE algorithm~\citep{li2020page}. Our preliminary results demonstrate that the cyclic methods converge faster with better generalization than full gradient methods when using large batch sizes, thus shedding light on the possibility of remedying the drawbacks of large-batch methods~\citep{keskar2017on}. 
% We apply our proposed algorithms to train LeNet on the CIFAR-10 dataset, and compare them with SGD and the PAGE algorithm~\citep{li2020page}. Our preliminary numerical results demonstrate that variance-reduced P-CCD has performance better than full gradient methods with large batch size, thus shedding light on the possibility of more efficient partial variable update in neural network training. \xc{large batch}
% We apply our proposed algorithm to train LeNet on the MNIST and CIFAR-10 datasets. 
% We compare it with SGD and the PAGE algorithm~\citep{li2020page}. 
% Our preliminary numerical results demonstrate that variance-reduced P-CCD has performance at least comparable to and sometimes better than full gradient methods, thus shedding light on the possibility of more efficient partial variable update in neural network training. 
% However, we did not further model how to efficiently compute or approximate the block gradients in a cyclic manner for neural networks in this paper. \jd{The last sentence is problematic here.}

\subsection{Further Related Work}
Both block coordinate methods and variance reduction techniques in stochastic optimization have been subjects of much research.
For conciseness, we review only the additional literature that is most closely related to our work. 

\paragraph{Block Coordinate Descent.} Block coordinate methods have been widely used in both convex and nonconvex applications such as feature selection in high-dimensional computational statistics~\citep{wu2008coordinate, friedman2010regularization, mazumder2011sparsenet} and empirical risk minimization in machine learning~\citep{nesterov2012efficiency, lin2015accelerated, allen2016even, alacaoglu2017smooth, diakonikolas2018alternating, xu2015block}. 
% In general, based on the order in which (blocks of) coordinates are selected and updated~\citep{shi2016primer}, the BCD methods can be categorized into (i) greedy (or Gauss-Southwell) methods which greedily select the coordinates which lead to the largest progress, (ii) randomized methods and (iii) cyclic methods as mentioned above. Since greedy methods require full gradient information for greedy selection at each iteration, more attention has been drawn to random and cyclic methods, except in some specialized settings~\citep{nutini2015coordinate}. Further, it has been demonstrated by \cite{beck2013convergence, chow2017cyclic, sun2021worst} that the CCD methods can have better empirical performance than the randomized BCD methods. 
The convergence of block coordinate methods has been extensively studied for various settings, see e.g.,~\cite{grippof1999globally, tseng2001convergence, razaviyayn2013unified, xu2015block, song2021fast} and  references therein. 
In nonconvex settings,  asymptotic convergence of block coordinate methods was established in~\cite{chen2021global,xu2017globally}. 
In terms of non-asymptotic convergence guarantees, \cite{chen2016accelerated} provides such a result for a randomized method under a sparsity constraint and restricted strong convexity, while~\cite{xu2017globally,xu2013block} provides results for cyclic methods under the K{\L} property. 
For a stochastic gradient variant of a cyclic method, \emph{asymptotic} convergence was analyzed by~\cite{xu2015block}. %\xc{We further remark that ADMM algorithms, which are closely related to the cyclic methods~\citep{tibshirani2017dykstra}, have been incorporated with variance reduction and established non-asymptotic convergence in both convex and nonconvex settings (see e.g., \cite{zheng2016fast, driggs2020spring}).}

\paragraph{Variance Reduction.} To address the issue of the constant variance of the (minibatch) gradient estimator, several variance reduction methods have been proposed.
% The convergence rate of standard stochastic gradient descent methods with minibatching is limited by the constant variance of the (minibatch) gradient estimator. To address this issue, several variance reduction methods have been proposed during the past decade. % for different optimization settings. %These have primarily been studied for (convex and nonconvex) minimization problems.
SAG~\citep{schmidt2017minimizing} was the first stochastic gradient method with a linear convergence rate for strongly convex finite-sum problems, and was based on a biased gradient estimator. \citet{johnson2013accelerating} and \citet{defazio2014saga} improved SAG by proposing unbiased estimators of SVRG-type and SAGA-type, respectively. These estimators were further enhanced with Nesterov acceleration~\citep{allen2017katyusha, song2020variance} and applied to nonconvex finite-sum/infinite-sum problems \citep{reddi2016stochastic,lei2017non}. 
For nonconvex stochastic (infinite-sum) problems, the recursive variance reduction estimators SARAH \citep{nguyen2017sarah} and SPIDER \citep{fang2018spider,zhou2018finding,zhou2018stochastic} were proposed to attain the optimal oracle complexity of $\mathcal{O}(1/\epsilon^3)$ for finding an {$\epsilon$-approximate} stationary point. 
% Both estimators are referred to as ``recursive'' variance reduction estimators, as they are biased when taking expectation w.r.t.~current randomness but unbiased w.r.t.~all the randomness in history.
PAGE~\citep{li2020page} and STORM~\citep{cutkosky2019momentum} further simplified SARAH and SPIDER by reducing the number of loops and avoiding large minibatches. 
% \cite{arjevani2020second} further extended this line of work by incorporating second-order information and dynamic batch sizes.

%\paragraph{Shuffled SGD.} 

\section{Preliminaries}\label{sec:prelim}
We consider a real $d$-dimensional Euclidean space $(\rr^d, \|\cdot\|)$, where $\|\cdot\| = \sqrt{\innp{\cdot, \cdot}}$ is induced by the (standard) inner product associated with the space and $d$ is finite. For any given positive integer $m$, we use $[m]$ to denote the set $\{1, 2, \dots, m\}$. We assume that we are given a positive integer $m \leq d$ and a partition of the coordinates $[d]$ into nonempty sets $\gS^1, \gS^2, \dots, \gS^m$. We let $\vx^j$ denote the subvector of $\vx$ indexed by the coordinates contained in $\gS^j$ and let $d^j := |\gS^j|$ denote the size of the set $\gS^j,$ for $j \in [m].$ To simplify the notation, we assume that the partition into sets $\gS^1, \gS^2, \dots, \gS^m$ is ordered, in the sense that for $1 \leq j< j' \leq m,$ $\max_{i \in \gS^j} i < \min_{i' \in \gS^{j'}} i'.$ This assumption is without loss of generality, as our results are invariant to permutations of the coordinates.  %Throughout the paper, we assume that the variable $\vx \in \rr^d$ has a block-wise structure, i.e. $\vx = (\vx^1, \vx^2, \dots, \vx^m)$ is composed of subvectors $\vx^j \in \rr^{d_j}$ $(j \in [m])$ orderly, where $\sum_{j = 1}^m d_j = d$. The assumed ordering of coordinates here is without loss of generality, as all the results in this paper are invariant to permutations of the coordinates. 
Given a matrix $\mA$, we let $\|\mA\| := \sup\{\mA\vx: \vx \in \rr^d, \|\vx\| \leq 1\}$ denote the standard operator norm. For a positive definite symmetric matrix $\mA,$ $\|\cdot\|_{\mA}$ denotes the Mahalanobis norm defined by $\|\vx\|_{\mA} = \sqrt{\langle \mA\vx, \vx \rangle}.$ We use $\mI_{d}$ to denote the identity matrix of size $d\times d;$ when the context is clear, we omit the subscript. 
For a sequence of positive semidefinite $d \times d$ matrices $\{\mQ^j\}_{j = 1}^m$, we define $\mQh^j$ by
$$
    (\mQh^j)_{t, k} = 
    \begin{cases}
            (\mQ^j)_{t, k}, & \text{ if } \min\{t, k\} > \sum_{\ell=1}^{j-1} d^{\ell},\\
            0, & \text{ otherwise,}
    \end{cases}
$$
which corresponds to the matrix $\mQ^j$ with first $j - 1$ blocks of rows and columns set to zero. Similarly, we define $\mQt^j$ by
$$
    (\mQt^j)_{t, k} = 
    \begin{cases}
            (\mQ^j)_{t, k}, & \text{ if } \max\{t, k\} \le \sum_{\ell=1}^{j-1} d^{\ell},\\
            0, & \text{ otherwise. }
    \end{cases}
$$
In other words, $\mQt^j$ corresponds to $\mQ^j$ with all but its first $j - 1$ blocks of rows and columns set to zero.
% \xc{Incorrect: that is $\mQt^j = \mQ^j - \mQh^j$ is the complement of $\mQh^j$ w.r.t.~$\mQ^j$. } 

We use $\nabla^j f(\vx)$ to denote the subvector of the gradient $\nabla f(\vx)$ indexed by the elements of $\gS^j$. For a block-separable convex function $r(\vx) = \sum_{j = 1}^m r^j(\vx^j)$, we use 
$r'(\vx)$ and $r^{j, '} (\vx^j)$ to denote the elements in the subdifferential sets $\partial r(\vx)$ and $\partial r^{j}(\vx^j)$ for $j \in [m]$, respectively.

Throughout the paper, we make use of the following assumptions. The first assumption is standard and rules out degenerate problem instances. 

\begin{assumption}
$F(\vx)$ is bounded below and $\vx^*$ is a global minimum of $F.$
\end{assumption}

% \paragraph{New Lipschitz Condition.} 
%To obtain improved convergence results, we introduce a new block-wise Lipschitz condition with respect to a Mahalanobis norm.

The following two assumptions are the gradient Lipschitz conditions used in the analysis of our algorithms. These conditions are not standard, due to the choice of weighted norms $\|\cdot\|_{\mLambda_j}, \|\cdot\|_{\mLambda_j^{-1}},$ and $\|\cdot\|_{\mQ^j}.$ Assumption~\ref{assmpt:Q-hat} is inspired by a similar Lipschitz condition introduced by~\citet{song2021fast}, the main difference with that paper being to use a more general norm $\|\cdot\|_{\mLambda_j^{-1}}$ for the gradients. 
% \xc{The techniques are completely disjoint and their results (for monotone variational inequalities) neither imply ours (for nonconvex minimization), nor the other way around.}

\begin{assumption}\label{assmpt:coordinate-smooth}
For all $\vx$ and $\vy$ that differ only in the $j$\textsuperscript{th} block, where $j\in[m]$, $f(\cdot)$ satisfies the following: 
\begin{align}
\|\nabla^j f(\vx) -  \nabla^j f(\vy)\|_{\mLambda_j^{-1}}\le \|\vx^j-\vy^j\|_{\mLambda_j}, \label{eq:coordinate-smooth}
\end{align}
where $\mLambda_j\in\sR^{d_j\times d_j}$ is a positive definite diagonal matrix.
\end{assumption}
Observe that when $\mLambda_j = {L_j}\mI_{d^j}$, Assumption~\ref{assmpt:coordinate-smooth} becomes the standard block Lipschitz condition~\citep{nesterov2012efficiency}. %Our choice of more general diagonal matrices $\mLambda_j$ is made to better explore local geometry of the problem. 
%That is, we assume that $f$ is $1$-smooth w.r.t. the norm $\|\cdot\|_{\mLambda_j}$ if restricted to $j$\textsuperscript{th} block coordinate. By Assumption \ref{assmpt:coordinate-smooth}, we have for all $\vx$ and $\vy$ that only differ in the $j$\textsuperscript{th} block ($j\in[m]$),
%\begin{align}\label{ineq:coordinate-smooth}
%f(\vy) - f(\vx) \le \langle \nabla^j f(\vx), \vy^j - \vx^j\rangle + \frac{1}{2}\|\vy^j - \vx^j\|_{\mLambda_j}^2.      
%\end{align}

\begin{assumption}\label{assmpt:Q-hat}
There exist symmetric positive semidefinite $d \times d$ matrices $\mQ^j,$ $1\leq j \leq m,$ such that each $\nabla^j f(\cdot)$ is $1$-Lipschitz continuous w.r.t.~the seminorm $\|\cdot\|_{\mQ^j}$. That is,  $\forall \vx, \vy \in \sR^d$, we have
\begin{equation}\label{eq:block-Lipschitz}
\|\nabla^j f(\vx) - \nabla^j f(\vy)\|_{\mLambda_j^{-1}}^2 \leq % (\vx-\vy)^T\mQ^j(\vx-\vy) = 
\|\vx - \vy\|_{\mQ^j}^2.
\end{equation}
% \jd{What is the meaning of the expectation here?} \cb{it is w.r.t. the randomness of $\nabla^j f(\vx)$.}
%
% where $\mF^i(\vx)$ is the $d^i$-dimensional vector comprised of the $S^i$ coordinates of $\mF(\vx).$ 

% and $\|\sum_{j=1}^m \mQh^j\|\le \hat{L}^2$.
% and let $ \mP =  \sum_{j=1}^m {\mQh}^j.$  We assume that $\|\mP\| \le L^2 <\infty\; (L>0)$. For the $j$\textsuperscript{th} principal submatrix $\mP^j\in\sR^{d_j\times d_j}$ with $(P^j)_{t, k} = \mP_{\sum_{l=1}^{j-1}d^l + t,  \sum_{l=1}^{j-1}d^l + k} (1\le t \le d^j, 1\le k \le d^j)$, we assume that $\|\mP^j\| \le L_j^2 \; (L_j\ge  0). $
% We assume that $\Big\|\sum_{j=1}^m {\mQh}^j \Big\|  = L^2 < \infty.$ Let 
% That is, $\nabla^j f(\vx)$ corresponds to the matrix $\mQ^i$ with the first $i - 1$ blocks of rows and columns set to zero.
\end{assumption}
We remark that matrices $\mQ^j$ do \emph{not} need to be known to the algorithm. 
Observe that if $f$ is $L$-smooth w.r.t.~the Euclidean norm, i.e., if $\|\nabla f(\vx) - \nabla f(\vy)\| \leq L\|\vx - \vy\|$, then Assumption~\ref{assmpt:Q-hat} can be satisfied with $\mLambda_j = L\mI_{d^j}$ and $\mQ^j = L \mI_{d}$ for $j \in [m]$. Indeed,  in this case we have,  
%\begin{equation*}
%    \|\nabla^j f(\vx) - \nabla^j f(\vy)\|_{\mLambda_j^{-1}} 
    %= \frac{1}{\sqrt{L}}\|\nabla^j f(\vx) - \nabla^j f(\vy)\| 
%    \leq \frac{1}{\sqrt{L}}\|\nabla f(\vx) - \nabla f(\vy)\| \leq \sqrt{L}\|\vx - \vy\| = \|\vx^j - \vy^j\|_{\mLambda_j}
%\end{equation*}
%for $\vx, \vy$ only different in the $j$\textsuperscript{th} block. Also, 
 $\forall \vx, \vy \in \rr^d$, 
$
    \|\nabla^j f(\vx) - \nabla^j f(\vy)\|_{\mLambda_j^{-1}}^2 
    % = \frac{1}{L}\|\nabla^j f(\vx) - \nabla^j f(\vy)\|^2 
    \leq \frac{1}{L}\|\nabla f(\vx) - \nabla f(\vy)\|^2 \leq L\|\vx - \vy\|^2 = \|\vx - \vy\|_{\mQ^j}^2.
$
However, the more general matrices $\mLambda_j$ and $\mQ^j$ in Assumptions~\ref{assmpt:coordinate-smooth} and \ref{assmpt:Q-hat} provide more flexibility in exploiting the problem geometry. For further discussion and comparison to Euclidean Lipschitz constants, see~\citet{song2021fast}. 

In the following, we let $\mLambda$ be the diagonal matrix composed of positive diagonal blocks $\mLambda_j$ for $j \in [m]$, i.e., $\mLambda = \diag(\mLambda_1, \mLambda_2, \ldots, \mLambda_m)$ and provide the appropriate assumption about the P{\L}-condition w.r.t.~the norm $\|\cdot\|_{\mLambda}$. This assumption is used only when proving linear convergence of our algorithms, not throughout the paper. 
The constant in this assumption need not be known. %\xc{Lipschitz constant comparison?}
\begin{assumption}\label{assmpt:PL-condition}
We say that $F$ satisfies the P{\L} condition w.r.t. $\|\cdot\|_{\mLambda}$ with parameter $\mu >0$, 
% \xc{Is it ok to directly assume that $F$ satisfies PL condition? Or will it be better we assume that $f$ satisfies Pl condition while $r$ satisfies other kinds of assumption, like strong convexity?} 
if for all $\vx \in \sR^d$, 
\begin{equation}
    % \|\nabla F(\vx)\|_{\mLambda^{-1}}^2 = 
    {\dist}^2(\partial F(\vx), \vzero) \geq 2\mu\big(F(\vx) - F(\vx^*)\big),
\end{equation}
with $\dist^2(\partial F(\vx), \vzero) := \underset{r'(\vx) \in \partial r(\vx)}{\inf}\|\nabla f(\vx) + r'(\vx)\|_{\mLambda^{-1}}^2$. 
\end{assumption}

\paragraph{Stochastic settings.}
%\jd{Needs updating; will come back to it.}
\indent In the stochastic setting of our problem, we consider the finite sum nonconvex optimization problem described by~\eqref{eq:sto-opt}. 
Our analysis also handles the case of stochastic optimization problems of the form \eqref{pro:stoc} by taking $n \rightarrow \infty$. 
To avoid using separate notation for the two settings (finite and infinite sum), we state the assumptions and the results for problems \eqref{eq:sto-opt} and treat \eqref{pro:stoc} as the limiting case of \eqref{eq:sto-opt} when $n \rightarrow \infty$.  
\begin{assumption}\label{assmpt:finite-var}
For any $\vx \in \sR^d$,  
\begin{align}
\E_i\big[ \|\nabla f_i(\vx) -  \nabla f(\vx)\big\|^2_{\mLambda^{-1}} \big] \le \sigma^2, 
\end{align}
where $i$ is drawn uniformly at random from $[n].$ 
\end{assumption}

\begin{assumption}\label{assmpt:coordinate-smooth-sto}
For all $\vx$ and $\vy$ that only differ in the $j$\textsuperscript{th} block for $j\in[m]$,  
\begin{align}
\E_i\big[\|\nabla^j f_i(\vx) -  \nabla^j f_i(\vy)\|_{\mLambda_j^{-1}}\big]\le \|\vx^j-\vy^j\|_{\mLambda_j},
\end{align}
where $i$ is drawn uniformly at random from $[n]$ and  $\mLambda_j \in \sR^{d_j\times d_j}$ is a positive definite diagonal matrix.
\end{assumption}
Assumption~\ref{assmpt:coordinate-smooth-sto} implies Assumption~\ref{assmpt:coordinate-smooth}, due to the finite-sum assumption and uniform sampling. For simplicity, we use the same matrix  $\mLambda_j$ for both smoothness conditions.
% By Assumption \ref{assmpt:coordinate-smooth}, we have Lemma \ref{lem:coordinate-smooth-sto}. 
% \begin{lemma}\label{lem:coordinate-smooth-sto}
% If Assumption \ref{assmpt:coordinate-smooth-sto} holds, then for all $\vx$ and $\vy$ that only differ in the $j$\textsuperscript{th} block ($j\in[m]$),  we have 
% \begin{align}
% \|\nabla^j f(\vx) -  \nabla^j f(\vy)\|_{\mLambda_j^{-1}} \le \|\vx^j-\vy^j\|_{\mLambda_j}. \label{eq:coordinate-smooth-sto}    
% \end{align}
% and thus 
% \begin{align}
% f(\vx)  \le f(\vy) + \langle \nabla^j f(\vy), \vx^j - \vy^j\rangle + \frac{1}{2}\|\vx^j-\vy^j\|_{\mLambda_j}^2.      
% \end{align}
% \end{lemma}

% In the following, we let $\mLambda$ is the diagonal matrix composed by $\mLambda_1, \mLambda_2, \ldots, \mLambda_m$, i.e., $\mLambda = \begin{bmatrix}
%  \mLambda_1 &   \vzero & \cdots &  \vzero\\
%  \vzero & \mLambda_2 & \cdots & \vzero \\
% \vdots & \vdots & \vdots  & \vdots \\
% \vzero &  \vzero & \vzero &  \mLambda_m 
% \end{bmatrix}.$

\begin{assumption}\label{assmpt:vr-Q-hat-tilde}
There exist positive semidefinite matrices $\mQ^j,$ $1\leq j \leq m$ such that each $\nabla^j f$ is expected $1$-Lipschitz continuous w.r.t.~the seminorm $\|\cdot\|_{\mQ^j},$ i.e., $\forall \vx, \vy \in \sR^d,$
\begin{equation}\label{eq:block-Lipschitz-vr}
\E_i\big[\|\nabla^j f_i(\vx) - \nabla^j f_i(\vy)\|_{\mLambda_j^{-1}}^2\big] \le 
% (\vx-\vy)^T\mQ^j(\vx-\vy) = 
\|\vx - \vy\|_{\mQ^j}^2,
\end{equation}
where $i$ is drawn uniformly at random from $[n].$
\end{assumption}
Similarly, Assumption~\ref{assmpt:vr-Q-hat-tilde} implies Assumption~\ref{assmpt:Q-hat}, so we use the same matrix $\mQ^j$ for both cases. 
Finally, we introduce a useful result on variance bound from~\cite{zheng2016fast} for our later convergence analysis, with the proof provided in Appendix~\ref{appx:prelim} for completeness.
\begin{restatable}{lemma}{finiteVR}\label{lem:batch-vr}
Let $\gB$ be the set of $|\gB| = b$ samples from $[n]$, drawn without replacement and uniformly at random. Then, $\forall \vx \in \rr^d$ and $j \in [m]$,
\begin{equation}\label{eq:vr}
\begin{aligned}
\;& \E_{\mathcal{B}}\Big[\big\|\frac{1}{b}\sum_{i\in\gB}\nabla^j f_i(\vx) - \nabla^j f(\vx)\big\|^2\Big] \\
= \:& \frac{n-b}{b(n-1)}\E_i\big[ \| \nabla^j f_i(\vx) - \nabla^j f(\vx)\|^2 \big].   
\end{aligned}
\end{equation}
%\cb{Not very well for the statement.}
% \cb{The expectation on the RHS is not a g}
\end{restatable}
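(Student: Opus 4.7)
The plan is to reduce to a direct computation of the second moment by centering the summands. Define $\vv_i := \nabla^j f_i(\vx) - \nabla^j f(\vx)$ for $i \in [n]$. Since $\nabla^j f(\vx) = \frac{1}{n}\sum_{i=1}^n \nabla^j f_i(\vx)$, the centered vectors satisfy the key identity $\sum_{i=1}^n \vv_i = \vzero$. The quantity to compute is then $\E_{\gB}\bigl[\bigl\|\frac{1}{b}\sum_{i\in\gB} \vv_i\bigr\|^2\bigr]$.

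Next I expand the squared norm and pass the expectation inside, writing
\begin{equation*}
\E_{\gB}\Bigl[\Bigl\|\frac{1}{b}\sum_{i\in\gB} \vv_i\Bigr\|^2\Bigr] = \frac{1}{b^2} \sum_{i=1}^n \sum_{k=1}^n \Pr[i\in\gB,\, k\in\gB]\, \langle \vv_i, \vv_k\rangle.
\end{equation*}
For sampling without replacement, $\Pr[i\in\gB] = b/n$ for each $i$, and $\Pr[i\in\gB, k\in\gB] = \frac{b(b-1)}{n(n-1)}$ for $i \neq k$. Splitting the double sum into diagonal and off-diagonal parts yields
\begin{equation*}
\E_{\gB}\Bigl[\Bigl\|\frac{1}{b}\sum_{i\in\gB} \vv_i\Bigr\|^2\Bigr] = \frac{1}{b^2}\Bigl[\frac{b}{n}\sum_{i=1}^n \|\vv_i\|^2 + \frac{b(b-1)}{n(n-1)}\sum_{i\neq k}\langle \vv_i, \vv_k\rangle\Bigr].
\end{equation*}

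The centering then kicks in: since $\sum_i \vv_i = \vzero$, we have $0 = \bigl\|\sum_i \vv_i\bigr\|^2 = \sum_i \|\vv_i\|^2 + \sum_{i\neq k}\langle \vv_i, \vv_k\rangle$, so $\sum_{i\neq k}\langle \vv_i, \vv_k\rangle = -\sum_i \|\vv_i\|^2$. Substituting this and simplifying the scalar coefficient via $\frac{b}{n} - \frac{b(b-1)}{n(n-1)} = \frac{b(n-b)}{n(n-1)}$ gives
\begin{equation*}
\E_{\gB}\Bigl[\Bigl\|\frac{1}{b}\sum_{i\in\gB} \vv_i\Bigr\|^2\Bigr] = \frac{n-b}{b\,n(n-1)}\sum_{i=1}^n \|\vv_i\|^2 = \frac{n-b}{b(n-1)}\,\E_i\bigl[\|\vv_i\|^2\bigr],
\end{equation*}
which is the claimed identity after substituting back the definition of $\vv_i$.

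There is no significant obstacle; the only subtlety is correctly computing the pairwise inclusion probability $\frac{b(b-1)}{n(n-1)}$ for sampling without replacement (as opposed to $b^2/n^2$ for sampling with replacement), which is precisely what produces the finite-population correction factor $\frac{n-b}{n-1}$. I would note in passing that the norm in the statement is arbitrary (Euclidean here), so the identity holds verbatim for any Hilbert norm; the argument uses only the inner product structure and linearity of expectation.
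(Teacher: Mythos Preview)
Your proof is correct and follows essentially the same approach as the paper: expand the squared norm, separate diagonal and off-diagonal terms, compute the pairwise inclusion probability $\frac{b(b-1)}{n(n-1)}$ for sampling without replacement, and use the centering identity $\sum_i \vv_i = \vzero$ to collapse the cross terms. The paper's presentation uses indicator functions rather than inclusion probabilities directly, but the computation is identical, and your closing remark about the argument working for any inner-product norm is exactly the content of the paper's Remark following the proof.
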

% For notational convenience, we use Eq.~\eqref{eq:vr} also for the infinite-sum case, as $\E_{\mathcal{B}}\Big[\big\|\frac{1}{b}\sum_{i\in\gB}\nabla^j f_i(\vx) - \nabla^j f(\vx)\big\|^2\Big] \leq \frac{1}{b}\E_i\big[ \| \nabla^j f_i(\vx) - \nabla^j f(\vx)\|^2 \big]$ is implied by Eq.~\eqref{eq:vr} since $\frac{n - b}{n - 1} \leq 1$, and taking the limit $n \rightarrow \infty$ has no effect.
% \jd{Can you please add the proof for this lemma (or is it sth standard)? I don't immediately see how to prove it.} 
% \xc{see Proposition~1 in \cite{zheng2016stochastic}. I will check that proof later.}

% \xc{Define how we measure complexity? computation cost or number of arithmetic operations?}

\section{P-CCD}\label{sec:bccd}

As a warmup, in this section we provide a novel analysis of the standard Proximal Cyclic Block Coordinate Descent (P-CCD) algorithm (Algorithm~\ref{alg:ccd}) for the deterministic setting, adapted to our choice of block norms $\|\cdot\|_{\mLambda_j}.$ P-CCD cycles through the $m$ blocks of variables, updating one block at a time. 
When $m = 1$, P-CCD is the standard proximal gradient method, while when $m = d$, P-CCD is a proximal version of cyclic coordinate descent. 
\begin{algorithm}[h!]
\caption{Proximal Cyclic Block Coordinate Descent (P-CCD)}\label{alg:ccd}
\begin{algorithmic}[1]
\STATE \textbf{Input:} $m, K, \vx_{0}, \mLambda_1, \mLambda_2, \ldots, \mLambda_m$
\FOR{$k = 1$ to $K$} 
\FOR{$j = 1$ to $m$} %
\STATE $\vx_{k-1,  j} = (\vx^{1}_{k}, \ldots, \vx^{j-1}_{k}, \vx^{j}_{k-1},\ldots   \vx^{m}_{k-1})  $
% \STATE $\vx_k^j = \vx_{k-1}^j - \frac{1}{2L}\nabla^j f(\vx_{k-1,  j})$
\STATE $\vx_k^j = \argmin_{\vx^j\in\sR^{d_j}}\Big\{\langle \nabla^j f(\vx_{k-1,  j}), \vx^j\rangle + \frac{1}{2}\|\vx^j - \vx_{k-1}^j\|_{\mLambda_j}^2 + r^j(\vx^j)\Big\}$
\ENDFOR
%\STATE $\vx_{k} = \vx_{k,1} = \vx_{k-1, m+1} = ( \vx^{1}_{k}, \ldots,\ldots \vx^{m}_{k} )$ 
\ENDFOR
 \STATE \textbf{return} $\argmin_{\vx_k}\|\vx_k - \vx_{k-1}\|_{\mLambda} \; (k\in[K])$
\end{algorithmic}	
\end{algorithm}

% \xc{return $\argmin_{\vx_k}\|\vx_k - \vx_{k-1}\|_{\mLambda} (k\in[K])$?}
To analyze the convergence of Algorithm~\ref{alg:ccd}, we first define 
$$\hat{L} := \Big\|\mLambda^{-1/2} \big(\sum_{j=1}^m \hat{\mQ}^j\big)  \mLambda^{-1/2}\Big\|,$$ 
to simplify the notation. This constant appears in the analysis but is not used by the algorithm. 
% \textcolor{red}{Observe that if $\nabla f$ is $L$-Lipschitz continuous w.r.t.~the Euclidean norm, and choose $\mLambda = \mQ^j = L\mI_d$ (the worst case), we have $\hat{L} = \frac{1}{L}\|\sum_{j=1}^m \hat{\mQ}^j\| \leq \frac{1}{L}\|\sum_{j=1}^m \mQ^j\| \leq \sqrt{m}$. In practice it often happens that $\|\sum_{j=1}^m \hat{\mQ}^j\| \ll\|\sum_{j=1}^m \mQ^j\|$ and $\hat{L} \ll \sqrt{m}$ \citep{song2021fast}. }

The analysis is built on two key lemmas.
% : Lemma~\ref{lem:ccd-grad} and Lemma~\ref{lem:ccd-descent}. 
Lemma~\ref{lem:ccd-grad} bounds the norm of the gradient by the distance between successive iterates, using the generalized Lipschitz condition w.r.t.~a Mahalanobis norm, as stated in Assumption~\ref{assmpt:Q-hat}. Lemma~\ref{lem:ccd-descent} then bounds the sum of successive squared distances between iterates by the initial optimality gap, similar to a result that is typically proved for the (full-gradient) proximal method. Jointly, these two lemmas  lead to a guarantee of a proximal method, but with the generalized Lipschitz constant $\hat{L}$ replacing the traditional full-gradient Lipschitz constant encountered in full-gradient methods.
%We then use Assumption~\ref{assmpt:Q-hat} and the optimality condition for Line~$5$ in Algorithm~\ref{alg:ccd} to bound the gradient norm of each iterate $\vx_k$ by its distance to the previous iterate $\vx_{k - 1}$, as summarized in the following Lemma~\ref{lem:ccd-grad}.

\begin{restatable}{lemma}{lemmaccdgrad}\label{lem:ccd-grad}
Under Assumption~\ref{assmpt:Q-hat}, the iterates $\{\vx_k\}$ generated by Algorithm~\ref{alg:ccd} satisfy 
% \jd{need to define $r'$. Is the statement $\forall r'(\vx_k) \in \partial r(\vx_k)$?}\xc{Defined in Preliminaries. Comment out.}
\begin{align}\notag
  %\|\nabla f(\vx_k) + r'(\vx_k)\|_{\mLambda^{-1}}^2 
 {\dist}^2(\partial F(\vx_k), \vzero) \le \;&  2(\hat{L} + 1) \|\vx_k - \vx_{k-1}\|^2_{\mLambda}.
\end{align}
\end{restatable}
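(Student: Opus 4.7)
The plan is to start from the first-order optimality condition of the block proximal subproblem, extract an explicit element of $\partial r(\vx_k)$, and bound the resulting gradient-plus-subgradient residual in the $\mLambda^{-1}$ norm block by block. The block-Lipschitz bound of Assumption~\ref{assmpt:Q-hat} will turn the gradient differences into $\mQ^j$-seminorms, and the sparsity pattern encoded in $\hat{\mQ}^j$ will convert these back into $\hat{\mQ}^j$-seminorms of the cleaner quantity $\vx_k - \vx_{k-1}$, at which point the definition of $\hat{L}$ closes the argument.

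First, writing the optimality condition of the update for block $j$ yields the particular subgradient
\begin{equation*}
r^{j,'}(\vx_k^j) \;=\; -\nabla^j f(\vx_{k-1,j}) \;-\; \mLambda_j(\vx_k^j - \vx_{k-1}^j) \;\in\; \partial r^j(\vx_k^j).
\end{equation*}
Collecting these across $j\in[m]$ gives a specific $r'(\vx_k)\in\partial r(\vx_k)$, so $\dist^2(\partial F(\vx_k),\vzero) \le \sum_{j=1}^m \|\nabla^j f(\vx_k) + r^{j,'}(\vx_k^j)\|_{\mLambda_j^{-1}}^2$. Applying $\|a+b\|^2\le 2\|a\|^2+2\|b\|^2$ block-by-block and using the identity $\|\mLambda_j v\|_{\mLambda_j^{-1}}^2 = \|v\|_{\mLambda_j}^2$ gives the split
\begin{equation*}
\|\nabla^j f(\vx_k) + r^{j,'}(\vx_k^j)\|_{\mLambda_j^{-1}}^2 \;\le\; 2\|\nabla^j f(\vx_k) - \nabla^j f(\vx_{k-1,j})\|_{\mLambda_j^{-1}}^2 \;+\; 2\|\vx_k^j - \vx_{k-1}^j\|_{\mLambda_j}^2,
\end{equation*}
and Assumption~\ref{assmpt:Q-hat} upper-bounds the first term by $\|\vx_k - \vx_{k-1,j}\|_{\mQ^j}^2$.

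The crucial step, and the one I expect to be the main technical obstacle, is to rewrite the $\mQ^j$-seminorms of the staggered differences $\vx_k - \vx_{k-1,j}$ as $\hat{\mQ}^j$-seminorms of the single difference $\vx_k - \vx_{k-1}$. By construction, $\vx_k - \vx_{k-1,j}$ vanishes on the first $j-1$ blocks and agrees with $\vx_k - \vx_{k-1}$ on blocks $j,\dots,m$; since $\hat{\mQ}^j$ zeros out exactly the first $j-1$ block rows and columns of $\mQ^j$, both the matrix-vector product and the inner product only ``see'' entries in the last $m-j+1$ blocks, which gives $\|\vx_k - \vx_{k-1,j}\|_{\mQ^j}^2 = \|\vx_k - \vx_{k-1}\|_{\hat{\mQ}^j}^2$. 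Summing over $j$ then yields $\sum_{j=1}^m \|\vx_k - \vx_{k-1}\|_{\hat{\mQ}^j}^2 = \langle (\sum_{j=1}^m \hat{\mQ}^j)(\vx_k - \vx_{k-1}), \vx_k - \vx_{k-1}\rangle$, and the change of variables $\vz = \mLambda^{1/2}(\vx_k - \vx_{k-1})$ combined with the definition $\hat{L} = \|\mLambda^{-1/2}(\sum_{j=1}^m \hat{\mQ}^j)\mLambda^{-1/2}\|$ bounds this by $\hat{L}\|\vx_k - \vx_{k-1}\|_{\mLambda}^2$. The remaining term sums to $\sum_{j=1}^m \|\vx_k^j - \vx_{k-1}^j\|_{\mLambda_j}^2 = \|\vx_k - \vx_{k-1}\|_{\mLambda}^2$, and adding the two contributions gives the desired $2(\hat{L}+1)\|\vx_k - \vx_{k-1}\|_{\mLambda}^2$ bound.
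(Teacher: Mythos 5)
Your proposal is correct and follows essentially the same route as the paper's proof: extract the subgradient $r^{j,'}(\vx_k^j) = -\nabla^j f(\vx_{k-1,j}) - \mLambda_j(\vx_k^j - \vx_{k-1}^j)$ from the block optimality condition, split via Young's inequality, apply Assumption~\ref{assmpt:Q-hat}, use the block-sparsity identity $\|\vx_k - \vx_{k-1,j}\|_{\mQ^j}^2 = \|\vx_k - \vx_{k-1}\|_{\hat{\mQ}^j}^2$, and close with the definition of $\hat{L}$. All steps, including the key identity relating the staggered difference to $\hat{\mQ}^j$, match the paper's argument.
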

To bound $\|\vx_k - \vx_{k-1}\|^2_{\mLambda}$, we prove the following descent lemma induced by the block-wise smoothness in Assumption~\ref{assmpt:coordinate-smooth} and by telescoping cyclically over the blocks.
\begin{restatable}{lemma}{lemmaccddescent}\label{lem:ccd-descent}
Under Assumptions~\ref{assmpt:coordinate-smooth}~and~\ref{assmpt:Q-hat}, the iterates $\{\vx_k\}$ generated by Algorithm~\ref{alg:ccd} satisfy
\begin{align}
\sum_{i=1}^k \|\vx_i - \vx_{i-1}\|_{\mLambda }^2\le  2(F(\vx_0)  - F(\vx^*)).
\end{align}
% where $\vx^* = \argmin_{\vx\in\sR^d} F(\vx).$
\end{restatable}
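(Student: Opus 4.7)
The plan is to establish a per-block descent inequality and then telescope, first over the blocks within one outer iteration and then over the outer iterations.

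First I would exploit the fact that the update at step 5 of Algorithm~\ref{alg:ccd} is the minimizer of a $1$-strongly convex function (in $\|\cdot\|_{\mLambda_j}$). Plugging in $\vx_{k-1}^j$ as the comparison point in the strong-convexity inequality and rearranging gives
\begin{equation*}
\langle \nabla^j f(\vx_{k-1,j}),\, \vx_k^j - \vx_{k-1}^j\rangle + r^j(\vx_k^j) - r^j(\vx_{k-1}^j) \;\le\; -\|\vx_k^j - \vx_{k-1}^j\|_{\mLambda_j}^2.
\end{equation*}
Next I would apply Assumption~\ref{assmpt:coordinate-smooth}, which (via a standard integration argument along the segment $[\vx_{k-1,j},\,\vx_{k-1,j+1}]$ that differs only in block $j$) yields the block descent lemma
\begin{equation*}
f(\vx_{k-1,j+1}) \;\le\; f(\vx_{k-1,j}) + \langle \nabla^j f(\vx_{k-1,j}),\, \vx_k^j - \vx_{k-1}^j\rangle + \tfrac{1}{2}\|\vx_k^j - \vx_{k-1}^j\|_{\mLambda_j}^2,
\end{equation*}
where $\vx_{k-1,j+1} = (\vx_k^1,\dots,\vx_k^j, \vx_{k-1}^{j+1},\dots,\vx_{k-1}^m)$ extends the notation so that $\vx_{k-1,1} = \vx_{k-1}$ and $\vx_{k-1,m+1} = \vx_k$.

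Combining the two displays above and adding $r(\vx_{k-1,j+1}) - r(\vx_{k-1,j}) = r^j(\vx_k^j) - r^j(\vx_{k-1}^j)$ to both sides (using block separability of $r$) gives the per-block contraction
\begin{equation*}
F(\vx_{k-1,j+1}) \;\le\; F(\vx_{k-1,j}) - \tfrac{1}{2}\|\vx_k^j - \vx_{k-1}^j\|_{\mLambda_j}^2.
\end{equation*}
Telescoping this over $j = 1,\dots,m$, and using that $\mLambda = \diag(\mLambda_1,\dots,\mLambda_m)$ so that $\|\vx_k - \vx_{k-1}\|_{\mLambda}^2 = \sum_{j=1}^m \|\vx_k^j - \vx_{k-1}^j\|_{\mLambda_j}^2$, yields the per-cycle descent
\begin{equation*}
F(\vx_k) \;\le\; F(\vx_{k-1}) - \tfrac{1}{2}\|\vx_k - \vx_{k-1}\|_{\mLambda}^2.
\end{equation*}

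Finally, telescoping over $i = 1,\dots,k$ and using $F(\vx_k) \ge F(\vx^*)$ from Assumption~\ref{assmpt:PL-condition}'s background boundedness, I obtain the claimed inequality. There is no real obstacle here beyond bookkeeping; the only subtle point is verifying the separability identity for $r$ on the intermediate iterates $\vx_{k-1,j}$ so that the $F$-descent (rather than just an $f$-descent) holds at each inner step, which is what lets the partial-block smoothness of $f$ combine cleanly with the nonsmooth term. Assumption~\ref{assmpt:Q-hat} is not actually invoked in this argument and could in principle be dropped from the hypothesis.
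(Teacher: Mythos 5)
Your argument is correct and follows essentially the same route as the paper's proof: a per-block descent inequality obtained from Assumption~\ref{assmpt:coordinate-smooth} together with the optimality of the prox step (you phrase this via strong convexity of the subproblem, the paper via the subgradient inequality and the fixed-point relation $\vx_k^j = \vx_{k-1}^j - \mLambda_j^{-1}(\nabla^j f(\vx_{k-1,j}) + r^{j,'}(\vx_k^j))$, which is equivalent), followed by telescoping over blocks within a cycle and then over iterations; your observation that Assumption~\ref{assmpt:Q-hat} is never invoked is also consistent with the paper's proof. The only slip is attributing $F(\vx_k) \ge F(\vx^*)$ to Assumption~\ref{assmpt:PL-condition}: it follows from Assumption~1 ($\vx^*$ is a global minimizer of $F$), and the P{\L} condition plays no role in this lemma.
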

Proofs of Lemmas~\ref{lem:ccd-grad}~and~\ref{lem:ccd-descent} are deferred to Appendix~\ref{appx:bccd}. The next result describes the convergence of Algorithm~\ref{alg:ccd}.
\begin{theorem}\label{thm:ccd}
Under Assumptions~\ref{assmpt:coordinate-smooth}~and~\ref{assmpt:Q-hat}, let $\vx^*$ be a global minimizer of \eqref{eq:prob} and $\{\vx_k\}$ be the iterates generated by Algorithm~\ref{alg:ccd}. Then after $K$ (outer-loop) iterations, we have
\begin{align}\notag
\min_{k\in[K]}    {\dist}^2(\partial F(\vx_k), \vzero) %\|\nabla f(\vx_k) + r'(\vx_k)\|_{\mLambda^{-1}}^2 
\le\;& \frac{4(\hat{L} + 1)(F(\vx_{0}) - F(\vx^*))}{K}. 
\end{align}
\end{theorem}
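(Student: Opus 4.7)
The plan is to combine the two key lemmas (Lemmas~\ref{lem:ccd-grad} and \ref{lem:ccd-descent}) directly, using the standard averaging trick that bounds a minimum by an average. Both lemmas have already done the heavy lifting, so the theorem should follow from a short, essentially routine argument.

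First I would apply Lemma~\ref{lem:ccd-grad} at iteration $k$ to obtain, for every $k \in [K]$,
\[
\dist^2(\partial F(\vx_k), \vzero) \le 2(\hat{L}+1)\|\vx_k - \vx_{k-1}\|_{\mLambda}^2.
\]
Taking the minimum over $k \in [K]$ on both sides preserves the inequality, so
\[
\min_{k \in [K]} \dist^2(\partial F(\vx_k), \vzero) \le 2(\hat{L}+1)\min_{k \in [K]}\|\vx_k - \vx_{k-1}\|_{\mLambda}^2.
\]

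Next, I would upper-bound the minimum of $\|\vx_k - \vx_{k-1}\|_{\mLambda}^2$ over $k \in [K]$ by the average, and then invoke Lemma~\ref{lem:ccd-descent} with $k = K$ to control that sum:
\[
\min_{k \in [K]}\|\vx_k - \vx_{k-1}\|_{\mLambda}^2 \;\le\; \frac{1}{K}\sum_{i=1}^K \|\vx_i - \vx_{i-1}\|_{\mLambda}^2 \;\le\; \frac{2(F(\vx_0) - F(\vx^*))}{K}.
\]
Chaining the two displays yields the claimed bound
\[
\min_{k \in [K]} \dist^2(\partial F(\vx_k), \vzero) \le \frac{4(\hat{L}+1)(F(\vx_0) - F(\vx^*))}{K},
\]
which matches the theorem statement. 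The returned iterate from Algorithm~\ref{alg:ccd} is the one minimizing $\|\vx_k - \vx_{k-1}\|_{\mLambda}$, so by the first display it attains this minimum gradient-norm bound, consistent with the $\argmin$ in the algorithm's output.

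Because the two lemmas already encapsulate the nontrivial content, no real obstacle remains at the theorem level; the only thing to be careful about is to use Lemma~\ref{lem:ccd-grad} (which requires Assumption~\ref{assmpt:Q-hat}) and Lemma~\ref{lem:ccd-descent} (which requires both Assumptions~\ref{assmpt:coordinate-smooth} and \ref{assmpt:Q-hat}) under exactly the hypotheses listed in the theorem, and to note that $\hat{L}$ is defined solely in terms of $\mLambda$ and the matrices $\mQ^j$ from Assumption~\ref{assmpt:Q-hat} and therefore need not be known to the algorithm.
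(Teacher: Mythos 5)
Your proposal is correct and follows essentially the same route as the paper: both combine Lemma~\ref{lem:ccd-grad} (pointwise gradient bound) with Lemma~\ref{lem:ccd-descent} (summed distance bound) and the min-versus-average trick; the paper sums the gradient bounds first and then bounds the minimum by the average, while you take the minimum first and then average the distances, which is an equivalent rearrangement. Your additional observation that the returned iterate $\argmin_{\vx_k}\|\vx_k-\vx_{k-1}\|_{\mLambda}$ also satisfies the bound (via Lemma~\ref{lem:ccd-grad} at that index) is a correct and harmless refinement.
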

% \sw{Cool - this result looks similar to gradient descent on a nonconvex $f$. To differentiate from this classical result, need to be careful about defining and using $L$. It's possible / likely that CCD has an advantage for certain types of function, but possibly only when a different steplength $1/L_j$ is used for each block, where $L_j$ is the Lipschitz constant for that block. (Just a guess.)}  \cb{Now I use the different step size.  }

\begin{proof}
By combining Lemmas \ref{lem:ccd-grad} and \ref{lem:ccd-descent}, we have 
\begin{equation*}
% K \min_{k\in[K]} \sum_{j=1}^m  \frac{1}{L_j} \|\nabla^j f(\vx_{k-1, j+1}) + r^{j, '}(\vx_k^j)\|^2 \le\;& \sum_{k=1}^K\sum_{j=1}^m  \frac{1}{L_j}  \|\nabla^j f(\vx_{k-1, j+1}) + r^{j, '}(\vx_k^j)\|^2 \nonumber\\
% \le \;& 8(F(\vx_0) - F(\vx^*)),  \nonumber\\
\begin{aligned}
\sum_{k=1}^K  {\dist}^2(\partial F(\vx), \vzero)%\|\nabla f(\vx_k) + r'(\vx_k)\|_{\mLambda^{-1}}^2 
\leq \;& 2(\hat{L}  + 1) \sum_{k=1}^K \|\vx_k - \vx_{k-1}\|_{\mLambda }^2 \\
\leq \;& 4(\hat{L}  + 1) (F(\vx_0) - F(\vx^*)). 
\end{aligned}
\end{equation*}
It remains to use that for all $k \in [K]$, we have ${\dist}^2(\partial F(\vx_k), \vzero) \geq \min_{k' \in [K]} {\dist}^2(\partial F(\vx_{k'}), \vzero)$.%$\|\nabla f(\vx_k) + r'(\vx_k)\|_{\mLambda^{-1}}^2 \geq \min_{k\in[K]}  \|\nabla f(\vx_k) + r'(\vx_k)\|_{\mLambda^{-1}}^2$
%, then Theorem \ref{thm:ccd} is proved after simple rearrangements.
\end{proof}
In comparison with guarantees with respect to Euclidean Lipschitz constants, this guarantee is never worse than by a factor $m$. 
In the case in which $f$ is $L$-smooth  $\mLambda = \mQ^j = L\mI_d$ (the worst case), we have ${\inf}_{r'(\mathbf{x}) \in \partial r(\mathbf{x})} \|\nabla f(\mathbf{x}) + r'(\mathbf{x})\|^2 = L\textup{dist}^2(\partial F(\mathbf{x}), \bm{0})$ and $\hat{L} = \frac{1}{L}\|\sum_{j = 1}^m \hat{\bm{Q}}^j\| \leq \frac{1}{L}\|\sum_{j = 1}^m \bm{Q}^j\| \leq m$, while in practice possibly $\|\sum_{j = 1}^m \hat{\bm{Q}}^j\| \ll \|\sum_{j = 1}^m \bm{Q}^j\|$ and $\hat{L} \ll m$ (see discussions in e.g., Song and Diakonikolas, 2021). 
The same points hold for guarantees in Section~\ref{sec:vr-bccd} as well. Note that the linear dependence on $m$ cannot be improved in the worst case for standard P-CCD, even on smooth convex problems~\citep{sun2021worst, kamri2022worst}.

If $F$ further satisfies the P{\L} condition of Assumption~\ref{assmpt:PL-condition}, Algorithm~\ref{alg:ccd} can achieve a faster, linear convergence rate. We summarize this result in Corollary~\ref{thm:ccd-pl} below, deferring the proof to Appendix~\ref{appx:bccd}.
\begin{restatable}{corollary}{thmccdpl}\label{thm:ccd-pl}
Suppose that the conditions of Theorem~\ref{thm:ccd} hold and that $F$ further satisfies Assumption~\ref{assmpt:PL-condition}. 
Then we have after $K$ iterations of Algorithm~\ref{alg:ccd} that
\begin{equation}\notag
    F(\vx_K) - F(\vx^*) \leq \Big(\frac{2(\hat{L} + 1)}{2(\hat{L}  + 1) + \mu}\Big)^{K}(F(\vx_{0}) - F(\vx^*)).
\end{equation}
\end{restatable}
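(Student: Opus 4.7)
The plan is to derive a one-step contraction on the suboptimality gap $\Delta_k := F(\vx_k) - F(\vx^*)$ and then iterate it $K$ times. The ingredients are already on the table: Lemma~\ref{lem:ccd-grad} relates the (squared) subgradient at $\vx_k$ to $\|\vx_k - \vx_{k-1}\|_{\mLambda}^2$, while the proof of Lemma~\ref{lem:ccd-descent} in fact establishes a per-iteration descent inequality whose telescoped version yields the bound quoted in the lemma. The PL condition from Assumption~\ref{assmpt:PL-condition} is the bridge that turns the subgradient lower bound into a bound on the suboptimality gap.

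First I would isolate the per-iteration form of Lemma~\ref{lem:ccd-descent}. The proof (in Appendix~\ref{appx:bccd}) proceeds by summing a one-step inequality of the type
\begin{equation*}
    F(\vx_{k-1}) - F(\vx_k) \;\ge\; \tfrac{1}{2}\|\vx_k - \vx_{k-1}\|_{\mLambda}^2,
\end{equation*}
which is what one obtains by applying the block-wise Lipschitz condition in Assumption~\ref{assmpt:coordinate-smooth} to each of the $m$ inner updates and summing cyclically. Next, combining this with Lemma~\ref{lem:ccd-grad},
\begin{equation*}
    F(\vx_{k-1}) - F(\vx_k) \;\ge\; \tfrac{1}{2}\|\vx_k - \vx_{k-1}\|_{\mLambda}^2 \;\ge\; \frac{1}{4(\hat{L}+1)}\,\mathrm{dist}^2(\partial F(\vx_k), \vzero).
\end{equation*}
Finally, Assumption~\ref{assmpt:PL-condition} gives $\mathrm{dist}^2(\partial F(\vx_k), \vzero) \ge 2\mu\,\Delta_k$, so
\begin{equation*}
    \Delta_{k-1} - \Delta_k \;\ge\; \frac{\mu}{2(\hat{L}+1)}\,\Delta_k, \qquad \text{i.e.,} \qquad \Delta_k \;\le\; \frac{2(\hat{L}+1)}{2(\hat{L}+1)+\mu}\,\Delta_{k-1}.
\end{equation*}
Iterating this contraction $K$ times yields the desired $F(\vx_K) - F(\vx^*) \le \big(\frac{2(\hat{L}+1)}{2(\hat{L}+1)+\mu}\big)^{K}(F(\vx_0) - F(\vx^*))$.

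The main (and only mild) obstacle is step one: we need the per-iteration descent, not just the telescoped one, and we must make sure that applying Lemma~\ref{lem:ccd-grad} to the \emph{same} index $k$ is consistent, since Lemma~\ref{lem:ccd-grad} bounds the subgradient at $\vx_k$ by the step into $\vx_k$ (not out of it). Both facts are natural byproducts of the cyclic block update combined with the optimality condition for each block subproblem, so the chaining above goes through cleanly, and no additional regularity beyond Assumptions~\ref{assmpt:coordinate-smooth}, \ref{assmpt:Q-hat}, and \ref{assmpt:PL-condition} is needed.
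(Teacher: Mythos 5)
Your proposal is correct and follows essentially the same route as the paper's proof: it combines the per-iteration descent inequality (the one-step form established inside the proof of Lemma~\ref{lem:ccd-descent}, i.e.\ Inequality~\eqref{eq:descent2}) with Lemma~\ref{lem:ccd-grad} and the P{\L} condition to obtain the one-step contraction $\Delta_k \le \frac{2(\hat{L}+1)}{2(\hat{L}+1)+\mu}\Delta_{k-1}$, then iterates. The subtlety you flag about needing the per-iteration (rather than telescoped) descent is exactly how the paper handles it, so nothing is missing.
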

The main bottleneck in implementing P-CCD is in finding appropriate matrices $\mLambda_j$ that satisfy Assumption~\ref{assmpt:coordinate-smooth}. 
The simplest approach is to use $\mLambda_j = L_j \mI_{d_j}$ and estimate $L_j$ adaptively using the standard backtracking line search. 
This procedure can be implemented efficiently, as the analysis requires Assumption~\ref{assmpt:coordinate-smooth} to hold only between successive iterates. The use of more general diagonal matrices $\mLambda_j$ %is left for future research.  
%(The use of these matrices 
is a form of block preconditioning, which is frequently used to heuristically improve the performance of full-gradient methods.
In our neural net training experiments, for example, we use spectral normalization (see Section~\ref{sec:num-exp} for more details). %) 

\section{Variance Reduced P-CCD}\label{sec:vr-bccd}
We now consider nonconvex optimization problems of the form \eqref{eq:sto-opt}. 
When $n$ is finite, \eqref{eq:sto-opt} is a finite-sum problem and we can compute the full gradient of $F(\vx)$ with $O(nd)$ cost. Without loss of generality, we use $n = +\infty$ to denote the general stochastic optimization setting as in \eqref{pro:stoc}, where the full gradient can no longer be computed in finite time. For both settings, Algorithm~\ref{alg:vr-ccd} describes VR-CCD, the Variance-Reduced Cyclic block Coordinate Descent algorithm, which combines Algorithm~\ref{alg:ccd} with recursive variance reduction of PAGE type~\citep{li2020page} to reduce the per-iteration cost and improve the overall complexity.

\begin{algorithm*}[h!]
\caption{Variance-Reduced Cyclic Block Coordinate Descent (VR-CCD)}\label{alg:vr-ccd}
\begin{algorithmic}[1]
\STATE \textbf{Input:} $m$, $\eta$, $K$, $p$, $b$, $b'$, $\mLambda_1, \mLambda_2, \ldots, \mLambda_m$, $\vx_0 = \vx_{-1} = \vx_{-1, 1} = \cdots = \vx_{-1, m + 1}$.
\STATE $ \vg_{-1} = \frac{1}{b}\sum_{i\in\gB} \nabla f_i(\vx_{0})$. 
\FOR{$k = 1$ to $K$} 
\FOR{$j = 1$ to $m$} %
\STATE $\vx_{k-1,  j} = (\vx^{1}_{k}, \ldots, \vx^{j-1}_{k}, \vx^{j}_{k-1},\ldots   \vx^{m}_{k-1})$
\STATE $\vg_{k-1}^j = 
\begin{cases}
\frac{1}{b}\sum_{i\in\gB} \nabla^j f_i(\vx_{k-1,  j}), \text{ with probability } p \\ 
\vg_{k-2}^j +\frac{1}{b'} \sum_{i\in\gB'} (\nabla^j f_i(\vx_{k-1,  j}) - \nabla^j f_i(\vx_{k-2, j})),     \text{ with probability } 1-p
\end{cases}$
% \STATE $\vx_k^j = \vx_{k-1}^j -  \eta\vg_{k-1}^j $
\STATE $\vx_k^j = \argmin_{\vx^j\in\sR^d}\Big\{\langle \vg_{k-1}^j, \vx^j\rangle + r^j(\vx^j) +  \frac{1}{2\eta}\|\vx^j -  \vx_{k-1}^j\|_{\mLambda_j}^2\Big\}$
\ENDFOR
%\STATE  $\vx_{k} = \vx_{k,1} = \vx_{k-1, m+1} = ( \vx^{1}_{k},  \vx^{2}_{k} \ldots, \vx^{m}_{k})$
\ENDFOR
\STATE \textbf{return} $\hat{\vx}_K$ uniformly drawn from $\{\vx_k\}_{k \in [K]}$
\end{algorithmic}	
\end{algorithm*}

% \xc{Can be compressed if need more space.} 
Instead of computing the block-wise gradient at each inner iteration as P-CCD (Algorithm~\ref{alg:ccd}), 
VR-CCD maintains and updates a recursive gradient estimator $g_{k - 1}^j$ 
of PAGE type for each block gradient $j$ at outer iteration $k$ (i.e., $g_{k - 1}^j$ estimates $\nabla^j f(\vx_{k - 1, j})$). % recursively for $j$\textsuperscript{th} block at $k$\textsuperscript{th} outer iteration. 
By the definition of $g_{k - 1}^j$ in Line~$6$ of Algorithm~\ref{alg:vr-ccd}, it uses a mini-batch estimate $\frac{1}{b}\sum_{i\in\gB} \nabla^j f_i(\vx_{k-1,  j})$ with probability $p$, where $|\gB| = b$. 
With probability $1 - p$, the estimate $\vg_{k-2}^j +\frac{1}{b'} \sum_{i\in\gB'} (\nabla^j f_i(\vx_{k-1,  j}) - \nabla^j f_i(\vx_{k-2, j}))$ reuses the previous $j$\textsuperscript{th} block gradient estimator $g_{k - 2}^j$, and forms an approximation of the gradient difference $\nabla^j f(\vx_{k - 1, j}) - \nabla^j f(\vx_{k - 2, j})$ based on the minibatch $\gB'$ where $|\gB'| = b'$. 
When  $p = 1$, the PAGE estimator reduces to vanilla minibatch SGD. To lower the computational cost, it is common to take $b' \ll b$ and $p \ll 1$. 
The estimator $g_{k - 1}^j$ is then incorporated into the Lipschitz gradient surrogate function in Line~$7$ to compute the new iterate $\vx_{k}^j$. 
The variance of PAGE estimator w.r.t.~block coordinates can be bounded recursively as in Lemma~\ref{lem:vr} below, using the minibatch variance bound results in Lemma~\ref{lem:batch-vr}. The proof appears in Appendix~\ref{appx:vr-bccd}.

To simplify the notation, we use the following definitions in the statements and proofs for this section,  for $k \geq 0$:
\begin{align*}
    \tilde{L} & := \big\|\mLambda^{-1/2} \big(\sum_{j=1}^m\tilde{\mQ}^j\big) \mLambda^{-1/2}\big\|, \\
    u_k & := \sum_{j=1}^m\|\vg_{k-1}^j - \nabla^j  f(\vx_{k-1,  j})\|_{\mLambda_j^{-1}}^2, \\
    v_k & := \|\vx_k -  \vx_{k-1}\|_{\mLambda}^2, \\
    s_k & :=  {\dist}^2(\partial F(\vx_k), \vzero).
\end{align*}
\begin{restatable}{lemma}{lemmavr}\label{lem:vr}
Suppose Assumptions~\ref{assmpt:finite-var}--\ref{assmpt:vr-Q-hat-tilde} hold, then the variance $\E[u_k]$ of the gradient estimators $\{\vg_{k-1}^j\}_{j = 1}^m$ at iteration $k$ of Algorithm~\ref{alg:vr-ccd} is bounded by:
\begin{equation}\label{eq:vr-bound}
\begin{aligned}
\E[u_k] \leq \;& \frac{2p(n-b)\sigma^2}{b(n-1)} + 2\Big(\frac{p(n-b)}{b(n-1)}+\frac{1-p}{b'}\Big)\tilde{L}\E[v_k]\notag \\
& + (1-p)\E[u_{k - 1}] + \frac{2(1-p)\hat{L}}{b'} \E[v_{k - 1}].
\end{aligned}
\end{equation}
\end{restatable}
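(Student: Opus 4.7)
I will condition on which of the two branches in the definition of $\vg_{k-1}^j$ is taken, analyze the two branches separately, and combine them at the end with weights $p$ and $1-p$. The overall structure mirrors the full-gradient PAGE variance recursion of \citet{li2020page}, but the new cyclic ingredient is that the reference point for the $j$-th block estimator is the partially-updated iterate $\vx_{k-1,j}$, which itself depends on $j$. This forces us to track differences between $\vx_{k-1,j}$ and the ``clean'' endpoints $\vx_{k-2}, \vx_{k-1}, \vx_k$, and to use the block-triangular matrices $\mQt^j$ and $\mQh^j$ to recover the constants $\tilde L$ and $\hat L$ after summing over $j$.

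\textbf{Case 1 (minibatch SGD branch, probability $p$).} Since $\vg_{k-1}^j - \nabla^j f(\vx_{k-1,j})$ is a centered minibatch average, Lemma~\ref{lem:batch-vr} gives
\begin{equation*}
\E_{\gB}\bigl[\|\vg_{k-1}^j - \nabla^j f(\vx_{k-1,j})\|_{\mLambda_j^{-1}}^2\bigr] = \tfrac{n-b}{b(n-1)}\,\E_i\bigl[\|\nabla^j f_i(\vx_{k-1,j}) - \nabla^j f(\vx_{k-1,j})\|_{\mLambda_j^{-1}}^2\bigr].
\end{equation*}
To extract the $\sigma^2$ term, I add and subtract $\nabla^j f_i(\vx_{k-1}) - \nabla^j f(\vx_{k-1})$ and apply $\|a+b\|^2 \leq 2\|a\|^2 + 2\|b\|^2$. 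Summing over $j$, the first piece is bounded by $2\sigma^2$ directly from Assumption~\ref{assmpt:finite-var} (using $\mLambda = \diag(\mLambda_j)$), and the second piece, after variance-$\leq$-second-moment and Assumption~\ref{assmpt:vr-Q-hat-tilde}, is bounded by $2\sum_j \|\vx_{k-1,j} - \vx_{k-1}\|_{\mQ^j}^2$. Since $\vx_{k-1,j}$ and $\vx_{k-1}$ agree on blocks $j,\ldots,m$, that seminorm equals $\|\vx_k - \vx_{k-1}\|_{\mQt^j}^2$, which sums to $\tilde L\, v_k$ by the definition of $\tilde L$.

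\textbf{Case 2 (recursive branch, probability $1-p$).} Decomposing $\vg_{k-1}^j - \nabla^j f(\vx_{k-1,j}) = A_j + B_j$ with $A_j = \vg_{k-2}^j - \nabla^j f(\vx_{k-2,j})$ and $B_j = \tfrac{1}{b'}\sum_{i \in \gB'}\bigl(\nabla^j f_i(\vx_{k-1,j}) - \nabla^j f_i(\vx_{k-2,j})\bigr) - \bigl(\nabla^j f(\vx_{k-1,j}) - \nabla^j f(\vx_{k-2,j})\bigr)$, independence of $\gB'$ from the past makes $B_j$ mean-zero conditional on the past, so the conditional second moment splits. The $A_j$ part sums to $u_{k-1}$, while Lemma~\ref{lem:batch-vr} (with $b'$ in place of $b$, crudely bounding the prefactor by $1/b'$), variance-$\leq$-second-moment, and Assumption~\ref{assmpt:vr-Q-hat-tilde} give $\sum_j \E\bigl[\|B_j\|_{\mLambda_j^{-1}}^2 \mid \text{past}\bigr] \leq \tfrac{1}{b'}\sum_j \|\vx_{k-1,j} - \vx_{k-2,j}\|_{\mQ^j}^2$. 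The crucial algebraic step is to split $\vx_{k-1,j} - \vx_{k-2,j}$ by block index: its first $j-1$ blocks equal $\vx_k - \vx_{k-1}$ on those blocks, while its last $m-j+1$ blocks equal $\vx_{k-1} - \vx_{k-2}$ on those blocks. Applying $\|a+b\|_{\mQ^j}^2 \leq 2\|a\|_{\mQt^j}^2 + 2\|b\|_{\mQh^j}^2$ and summing over $j$ yields the bound $\tfrac{2\tilde L}{b'}v_k + \tfrac{2\hat L}{b'}v_{k-1}$.

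\textbf{Combining and the main obstacle.} Weighting Case~1 and Case~2 by $p$ and $1-p$ conditional on the past, then taking total expectation, produces exactly the claimed recursion. The main obstacle, and the place where one must be careful, is the block decomposition in Case~2: in general $\mQ^j \neq \mQt^j + \mQh^j$ because the off-diagonal block coupling the first $j-1$ and last $m-j+1$ blocks is not captured by either, which is precisely why one uses the inequality $\|a+b\|_{\mQ^j}^2 \leq 2\|a\|_{\mQt^j}^2 + 2\|b\|_{\mQh^j}^2$ in place of an equality and consequently incurs the factor of $2$ appearing in the statement. The same type of decomposition explains the factor of $2$ in the $\tilde L v_k$ coefficient coming from Case~1.
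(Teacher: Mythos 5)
Your proposal is correct and follows essentially the same route as the paper's proof: condition on the filtration, weight the two branches by $p$ and $1-p$, use Lemma~\ref{lem:batch-vr} together with variance-$\leq$-second-moment, and exploit the block support of $\vx_{k-1,j}-\vx_{k-1}$ and $\vx_{k-1}-\vx_{k-2,j}$ to convert $\mQ^j$-seminorms into $\mQt^j$- and $\mQh^j$-seminorms, which sum to $\tilde{L}\,v_k$ and $\hat{L}\,v_{k-1}$. The only cosmetic difference is in the recursive branch, where you apply Assumption~\ref{assmpt:vr-Q-hat-tilde} once to the pair $(\vx_{k-1,j},\vx_{k-2,j})$ and then use Young's inequality in the $\mQ^j$-seminorm, whereas the paper inserts $\nabla^j f_i(\vx_{k-1})$ at the gradient level and applies the assumption twice; the two orderings involve exactly the same vectors and yield identical constants.
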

The general strategy to analyze the convergence of VR-CCD can be summarized as follows. Let $$\Phi_k = a_k\ee[F(\vx_k)] + b_k\ee[u_k] + c_k\ee[v_k]$$
be a potential function, where $\{a_k\}_{k \geq 1}, \{b_k\}_{k \geq 1}, \{c_k\}_{k \geq 1}$ are non-negative sequences to be specified later in the analysis. Our goal is to show that 
\begin{equation}\label{eq:pot-fn-telescope}
    \ee[s_k] \leq \Phi_{k - 1} - \Phi_k + \mathcal{E}_k,
\end{equation} 
where $\mathcal{E}_k$ are error terms arising from the noise of the estimator. Then, by telescoping \eqref{eq:pot-fn-telescope} and controlling the error sequence $\mathcal{E}_k$, we obtain the gradient norm guarantee of Algorithm~\ref{alg:vr-ccd}. First, we make use of the following descent lemma that utilizes block-wise smoothness from  Assumption~\ref{assmpt:coordinate-smooth-sto}. Its proof is deferred to Appendix~\ref{appx:vr-bccd}.

\begin{restatable}{lemma}{lemmavrdescent}\label{lem:vr-descent}
Let Assumption~\ref{assmpt:coordinate-smooth-sto} hold and let $r^{j, '}(\vx_{k}^j) \in \partial r^j(\vx_{k}^j)$ be such that $\vx_k^j = \vx_{k-1}^j - \eta\mLambda_j^{-1}(\vg_{k-1}^j + r^{j, '}(\vx_k^j)).$ Then the iterates of Algorithm~\ref{alg:vr-ccd} satisfy  
\begin{equation}\label{eq:descent-vr}
\begin{aligned}
  F(\vx_{k}) \le \;& F(\vx_{k-1}) -\frac{1-\eta}{2\eta}v_k + \frac{\eta}{2}u_k \\
  & - \frac{\eta}{2} \sum_{j = 1}^{m}\|\nabla^j f(\vx_{k-1,  j}) + r^{j, '}(\vx_k^j)\|_{\mLambda_j^{-1}}^2. 
\end{aligned}
\end{equation}
\end{restatable}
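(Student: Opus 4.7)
The plan is to derive a per-block descent inequality, then telescope cyclically over $j = 1, \dots, m$ using the identity $\vx_{k-1, j+1} - \vx_{k-1, j}$ only changes in block $j$. Using the convention $\vx_{k-1, 1} = \vx_{k-1}$ and $\vx_{k-1, m+1} = \vx_k$ (consistent with Algorithm~\ref{alg:vr-ccd}), the telescoped sum of $f(\vx_{k-1, j+1}) - f(\vx_{k-1, j})$ collapses to $f(\vx_k) - f(\vx_{k-1})$, and a parallel telescoping for the block-separable $r$ yields $r(\vx_k) - r(\vx_{k-1})$. Combined, these produce $F(\vx_k) - F(\vx_{k-1})$ on the left-hand side.

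For a fixed block $j$, since $\vx_{k-1, j+1}$ and $\vx_{k-1, j}$ differ only in the $j$\textsuperscript{th} block, I would first apply the block-wise smoothness (Assumption~\ref{assmpt:coordinate-smooth-sto}, which yields the corresponding deterministic descent inequality) to bound
\[
f(\vx_{k-1, j+1}) \le f(\vx_{k-1, j}) + \langle \nabla^j f(\vx_{k-1, j}), \vx_k^j - \vx_{k-1}^j \rangle + \tfrac{1}{2}\|\vx_k^j - \vx_{k-1}^j\|_{\mLambda_j}^2,
\]
and then add the subgradient inequality $r^j(\vx_k^j) - r^j(\vx_{k-1}^j) \le \langle r^{j,'}(\vx_k^j), \vx_k^j - \vx_{k-1}^j\rangle$ coming from convexity of $r^j$. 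The sum of the inner products then couples $\nabla^j f(\vx_{k-1,j}) + r^{j,'}(\vx_k^j)$ with the step direction.

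The next step is to exploit the update rule $\vx_k^j - \vx_{k-1}^j = -\eta\mLambda_j^{-1}(\vg_{k-1}^j + r^{j,'}(\vx_k^j))$, which gives $\|\vx_k^j - \vx_{k-1}^j\|_{\mLambda_j}^2 = \eta^2\|\vg_{k-1}^j + r^{j,'}(\vx_k^j)\|_{\mLambda_j^{-1}}^2$. Writing $\vd_j := \vg_{k-1}^j + r^{j,'}(\vx_k^j)$ and $\ve_j := \vg_{k-1}^j - \nabla^j f(\vx_{k-1, j})$, so that $\nabla^j f(\vx_{k-1, j}) + r^{j,'}(\vx_k^j) = \vd_j - \ve_j$, the right-hand side per block becomes
\[
f(\vx_{k-1, j}) + r^j(\vx_{k-1}^j) - \tfrac{\eta(2-\eta)}{2}\|\vd_j\|_{\mLambda_j^{-1}}^2 + \eta\langle \ve_j, \mLambda_j^{-1}\vd_j\rangle.
\]
The decisive algebraic step is then the identity $-\tfrac{\eta(2-\eta)}{2}\|\vd_j\|^2 + \eta\langle \ve_j, \mLambda_j^{-1}\vd_j\rangle = -\tfrac{(1-\eta)\eta}{2}\|\vd_j\|_{\mLambda_j^{-1}}^2 + \tfrac{\eta}{2}\|\ve_j\|_{\mLambda_j^{-1}}^2 - \tfrac{\eta}{2}\|\vd_j - \ve_j\|_{\mLambda_j^{-1}}^2$, which follows from expanding the squared norm $\|\vd_j - \ve_j\|_{\mLambda_j^{-1}}^2$ and grouping the cross term. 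This is the main bookkeeping obstacle, since one must split the single-step quadratic from smoothness into three pieces: the true progress term proportional to $\|\vd_j\|_{\mLambda_j^{-1}}^2$ (contributing to $v_k$), the variance term $\|\ve_j\|_{\mLambda_j^{-1}}^2$ (contributing to $u_k$), and the prox-grad mapping term $\|\vd_j - \ve_j\|_{\mLambda_j^{-1}}^2$ that survives as the stationarity measure in the conclusion.

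Finally, summing the per-block inequalities over $j = 1, \dots, m$, the $f$-differences telescope to $f(\vx_k) - f(\vx_{k-1})$, the $r^j$ differences add to $r(\vx_k) - r(\vx_{k-1})$, and $\sum_j \|\vd_j\|_{\mLambda_j^{-1}}^2 = \eta^{-2}\sum_j \|\vx_k^j - \vx_{k-1}^j\|_{\mLambda_j}^2 = \eta^{-2}v_k$ since $\mLambda$ is block-diagonal. The term $\sum_j \|\ve_j\|_{\mLambda_j^{-1}}^2$ is exactly $u_k$, and $\sum_j \|\vd_j - \ve_j\|_{\mLambda_j^{-1}}^2$ is exactly the last sum in \eqref{eq:descent-vr}. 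Collecting coefficients yields the claimed inequality.
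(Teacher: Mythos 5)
Your proposal is correct and follows essentially the same route as the paper's proof: per-block smoothness plus convexity of $r^j$, substitution of the update rule, the three-way split of the cross term into $\|\vd_j\|_{\mLambda_j^{-1}}^2$, $\|\ve_j\|_{\mLambda_j^{-1}}^2$, and $\|\vd_j-\ve_j\|_{\mLambda_j^{-1}}^2$ (the paper does this as an inner-product polarization identity, which is the same algebra), and cyclic telescoping over $j$. The only cosmetic difference is that you telescope $f$ and the $r^j$ increments separately, while the paper carries the full composite $F(\vx_{k-1,j+1})\le F(\vx_{k-1,j})+\cdots$ through each block; the resulting coefficients match the lemma exactly.
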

In the statement of Lemma~\ref{lem:vr-descent}, there must exist $r^{j, '}(\vx_{k}^j) \in \partial r^j(\vx_{k}^j)$  such that $\vx_k^j = \vx_{k-1}^j - \eta\mLambda_j^{-1}(\vg_{k-1}^j + r^{j, '}(\vx_k^j)),$ due to the first-order optimality condition of the minimization problem that defines $\vx_k^j.$

We further bound the gradient norms of intermediate iterates within a cycle in Inequality~\eqref{eq:descent-vr}, i.e. $\sum_{j = 1}^{m}\|\nabla^j f(\vx_{k-1,  j}) + r^{j, '}(\vx_k^j)\|_{\mLambda_j^{-1}}^2$, using smoothness from Assumption~\ref{assmpt:vr-Q-hat-tilde}. 
\begin{restatable}{lemma}{lemmavrgradnorm}\label{lem:vr-grad-norm}
Let Assumption~\ref{assmpt:vr-Q-hat-tilde} hold and let $r^{j, '}(\vx_{k}^j) \in \partial r^j(\vx_{k}^j)$ be such that $\vx_k^j = \vx_{k-1}^j - \eta\mLambda_j^{-1}(\vg_{k-1}^j + r^{j, '}(\vx_k^j)).$ Then for Algorithm~\ref{alg:vr-ccd} we have 
\begin{equation}
\begin{aligned}
s_k \le 2 \hat{L} v_k + 2\sum_{j=1}^m  \|\nabla^j f(\vx_{k-1,  j}) + r^{j,'}(\vx_k) \|_{\mLambda_j^{-1}}^2. 
% \;& \|\nabla f(\vx_k) + r'(\vx_k)\|_{\mLambda^{-1}}^2 \\
% \le \;& 2 \hat{L} \|\vx_k - \vx_{k-1}\|_{\mLambda}^2
% + 2\sum_{j=1}^m  \|\nabla^j f(\vx_{k-1,  j}) + r^{j,'}(\vx_k) \|_{\mLambda_j^{-1}}^2. 
\end{aligned}
\end{equation}
\end{restatable}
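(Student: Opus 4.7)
The plan is to bound $\dist^2(\partial F(\vx_k), \vzero)$ from above by a specific element of $\partial F(\vx_k)$ constructed from the proximal step, and then split off the cyclic displacement error. Since $r$ is block separable and $\mLambda = \diag(\mLambda_1,\dots,\mLambda_m)$ is block diagonal, any choice $r'(\vx_k) = (r^{1,'}(\vx_k^1),\dots,r^{m,'}(\vx_k^m)) \in \partial r(\vx_k)$ gives
\[
\dist^2(\partial F(\vx_k),\vzero) \leq \sum_{j=1}^m \|\nabla^j f(\vx_k) + r^{j,'}(\vx_k^j)\|_{\mLambda_j^{-1}}^2.
\]
I would take $r^{j,'}(\vx_k^j)$ to be the specific subgradients guaranteed by the first-order optimality condition of the proximal update (i.e., those satisfying $\vx_k^j = \vx_{k-1}^j - \eta\mLambda_j^{-1}(\vg_{k-1}^j + r^{j,'}(\vx_k^j))$, as in Lemma~\ref{lem:vr-descent}).

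Next, I would add and subtract $\nabla^j f(\vx_{k-1,j})$ inside each norm and apply $\|a+b\|^2 \leq 2\|a\|^2 + 2\|b\|^2$ to obtain
\[
s_k \leq 2\sum_{j=1}^m \|\nabla^j f(\vx_k) - \nabla^j f(\vx_{k-1,j})\|_{\mLambda_j^{-1}}^2 + 2\sum_{j=1}^m \|\nabla^j f(\vx_{k-1,j}) + r^{j,'}(\vx_k^j)\|_{\mLambda_j^{-1}}^2.
\]
The second sum already matches the right-hand side of the statement, so everything reduces to controlling the first sum by $\hat{L}\,v_k$.

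For each $j$, I would apply the Lipschitz condition of Assumption~\ref{assmpt:Q-hat} (which is a consequence of the stochastic version Assumption~\ref{assmpt:vr-Q-hat-tilde}) to get $\|\nabla^j f(\vx_k) - \nabla^j f(\vx_{k-1,j})\|_{\mLambda_j^{-1}}^2 \leq \|\vx_k - \vx_{k-1,j}\|_{\mQ^j}^2$. The key observation — and the main technical step — is that $\vx_k - \vx_{k-1,j}$ has its first $j-1$ blocks equal to zero (since those blocks agree with $\vx_k$), while its remaining blocks coincide with the corresponding blocks of $\vx_k - \vx_{k-1}$. By the definition of $\hat{\mQ}^j$ (which zeros out the first $j-1$ blocks of rows and columns of $\mQ^j$), the quadratic form $\langle \mQ^j \vz, \vz\rangle$ with $\vz = \vx_k - \vx_{k-1,j}$ is unaffected by replacing $\mQ^j$ with $\hat{\mQ}^j$, and the same replacement then makes the first $j-1$ blocks of $\vz$ irrelevant, so it equals $\|\vx_k - \vx_{k-1}\|_{\hat{\mQ}^j}^2$. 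I expect this matching between the cyclic iterate structure and the masked matrix $\hat{\mQ}^j$ to be the trickiest bookkeeping step in the proof.

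Finally, summing over $j$ gives $\sum_{j=1}^m \|\vx_k - \vx_{k-1}\|_{\hat{\mQ}^j}^2 = \|\vx_k - \vx_{k-1}\|_{\sum_j \hat{\mQ}^j}^2$. Writing $\vy = \mLambda^{1/2}(\vx_k - \vx_{k-1})$, this becomes $\langle \mLambda^{-1/2}(\sum_j \hat{\mQ}^j)\mLambda^{-1/2}\vy, \vy\rangle$, which by definition of the operator norm is at most $\hat{L}\|\vy\|^2 = \hat{L}\,v_k$. Combining with the splitting above yields exactly the claimed bound.
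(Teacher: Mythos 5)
Your proposal is correct and follows essentially the same route as the paper's proof: choose the proximal-step subgradients, add and subtract $\nabla^j f(\vx_{k-1,j})$ with Young's inequality, apply the $\mQ^j$-Lipschitz bound (Assumption~\ref{assmpt:vr-Q-hat-tilde}, which yields the deterministic version via Jensen), exploit that $\vx_k - \vx_{k-1,j}$ vanishes on the first $j-1$ blocks to replace $\mQ^j$ by $\mQh^j$, and finish with the operator-norm bound defining $\hat{L}$. No gaps.
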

By combining Lemma~\ref{lem:vr-descent}~and~\ref{lem:vr-grad-norm} and using the recursive variance bound of the estimator from Lemma~\ref{lem:vr}, we are ready to prove a bound on iteration complexity for VR-CCD in Theorem~\ref{thm:vr-main-result}. The proof is in Appendix~\ref{appx:vr-bccd}. 
\begin{restatable}{theorem}{theoremvrmainresult}\label{thm:vr-main-result}
Suppose that 
% Assumptions~\ref{assmpt:coordinate-smooth}~and~\ref{assmpt:Q-hat} 
Assumptions~\ref{assmpt:coordinate-smooth}--\ref{assmpt:Q-hat}~and~\ref{assmpt:finite-var}--\ref{assmpt:vr-Q-hat-tilde}
hold. Let $\vx^*$ be a global minimizer of \eqref{eq:prob} and $\{\vx_k\}$ be the iterates generated by Algorithm~\ref{alg:vr-ccd}. Then, we have
\begin{equation}\label{ineq:rate}
\begin{aligned}
\;& \E\Big[{\dist}^2(\partial F(\hat{\vx}_K), \vzero)\Big] \\
\leq \;& \frac{4\Delta_0}{\eta K} + \frac{2(1-p)(n-b)\sigma^2}{pb(n-1) K} + \frac{4(n-b)\sigma^2}{b(n-1)},
\end{aligned}
\end{equation}
where $\Delta_0 = F(\vx_0) - F(\vx^*)$, and $0<\eta \le \frac{-1 + \sqrt{1+4c_0}}{2c_0}$ with $c_0 = \frac{2(1-p)\hat{L}}{pb'} + \hat{L} + 2\Big( \frac{p(n-b)}{b(n-1)}+\frac{1-p}{b'}  \Big)\frac{\tilde{L}}{p}.$ 
% After $K$ (outer-loop) iterations, let $\hat{\vx}_K$ be uniformly randomly chosen from $\{\vx_k\}_{k \in [K]}$, we obtain 
\end{restatable}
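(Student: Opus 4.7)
The plan is to construct a potential $\Phi_k = \frac{4}{\eta}\E[F(\vx_k)] + \alpha \E[u_k] + \beta \E[v_k]$ with non-negative constants $\alpha,\beta$ to be chosen, establish a per-iteration inequality of the form $\E[s_k] + \Phi_k - \Phi_{k-1} \le \mathcal{E}$ for a constant error $\mathcal{E}$, and telescope. Since $\hat{\vx}_K$ is drawn uniformly from $\{\vx_k\}_{k\in[K]}$, we have $\E[\dist^2(\partial F(\hat{\vx}_K),\vzero)] = \frac{1}{K}\sum_{k=1}^K \E[s_k]$, so dividing the telescoped sum by $K$ yields the desired bound.

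First I would combine the two descent-type lemmas. Rearranging Lemma~\ref{lem:vr-descent} gives
\begin{equation*}
\sum_{j=1}^m \|\nabla^j f(\vx_{k-1,j}) + r^{j,'}(\vx_k^j)\|_{\mLambda_j^{-1}}^2 \le \frac{2}{\eta}\bigl(F(\vx_{k-1}) - F(\vx_k)\bigr) - \frac{1-\eta}{\eta^2}\, v_k + u_k,
\end{equation*}
and substituting into Lemma~\ref{lem:vr-grad-norm} and taking expectations yields
\begin{equation*}
\E[s_k] \le \frac{4}{\eta}\bigl(\E[F(\vx_{k-1})] - \E[F(\vx_k)]\bigr) + \Bigl(2\hat{L} - \frac{2(1-\eta)}{\eta^2}\Bigr)\E[v_k] + 2\E[u_k].
\end{equation*}

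Next I would introduce the potential difference $\Phi_k - \Phi_{k-1}$ and replace the resulting $(2+\alpha)\E[u_k]$ term using Lemma~\ref{lem:vr}, which in expectation reads $\E[u_k] \le (1-p)\E[u_{k-1}] + 2C_1\tilde{L}\,\E[v_k] + 2C_2\,\E[v_{k-1}] + 2C_3$, with $C_1 = \frac{p(n-b)}{b(n-1)} + \frac{1-p}{b'}$, $C_2 = \frac{(1-p)\hat{L}}{b'}$, $C_3 = \frac{p(n-b)\sigma^2}{b(n-1)}$. After collecting terms, the coefficient of $\E[u_{k-1}]$ is $(2+\alpha)(1-p) - \alpha$, which vanishes for $\alpha = \frac{2(1-p)}{p}$, giving $2+\alpha = 2/p$. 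With this choice, the coefficient of $\E[v_{k-1}]$ becomes $\frac{4C_2}{p} - \beta$, which vanishes for $\beta = \frac{4C_2}{p} = \frac{4(1-p)\hat{L}}{pb'}$. The remaining $\E[v_k]$ coefficient equals $2c_0 - \frac{2(1-\eta)}{\eta^2}$, where $c_0$ is precisely the constant in the theorem statement, and the hypothesis $\eta \le \frac{-1+\sqrt{1+4c_0}}{2c_0}$ is algebraically equivalent to $c_0\eta^2 + \eta - 1 \le 0$, rendering this coefficient nonpositive. Hence the per-iteration inequality collapses to $\E[s_k] + \Phi_k - \Phi_{k-1} \le \frac{4C_3}{p} = \frac{4(n-b)\sigma^2}{b(n-1)}$.

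Finally I would telescope from $k=1$ to $K$. By the initialization $\vx_0 = \vx_{-1}$, we have $\E[v_0]=0$, while Lemma~\ref{lem:batch-vr} combined with Assumption~\ref{assmpt:finite-var} yields $\E[u_0] \le \frac{(n-b)\sigma^2}{b(n-1)}$. Using $\Phi_K \ge \frac{4}{\eta}F(\vx^*)$, the telescoped bound reads
\begin{equation*}
\sum_{k=1}^K \E[s_k] \le \frac{4\Delta_0}{\eta} + \frac{2(1-p)(n-b)\sigma^2}{pb(n-1)} + \frac{4K(n-b)\sigma^2}{b(n-1)},
\end{equation*}
and dividing by $K$ delivers \eqref{ineq:rate}. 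I expect the main obstacle to be identifying the unique pair $(\alpha,\beta)$ that simultaneously (i) cancels both cross terms $\E[u_{k-1}]$ and $\E[v_{k-1}]$ produced by the variance recursion and (ii) collapses the residual $\E[v_k]$ coefficient to exactly $2c_0 - \frac{2(1-\eta)}{\eta^2}$, thereby matching the theorem's step-size condition; the block-cyclic structure itself is already absorbed into the constants $\hat{L},\tilde{L}$ inherited from Lemmas~\ref{lem:vr-grad-norm}~and~\ref{lem:vr}, so no further block-specific bookkeeping is required at this stage.
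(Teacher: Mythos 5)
Your proposal is correct and follows essentially the same route as the paper's proof: your potential $\Phi_k = \tfrac{4}{\eta}\E[F(\vx_k)] + \tfrac{2(1-p)}{p}\E[u_k] + \tfrac{4(1-p)\hat{L}}{pb'}\E[v_k]$ is exactly the paper's telescoped quantity rescaled by $\tfrac{4}{\eta}$, and your choice of $(\alpha,\beta)$ reproduces the paper's step of adding $\tfrac{\eta}{2p}$ times the variance recursion and enforcing the same quadratic step-size condition $c_0\eta^2 + \eta - 1 \le 0$. The treatment of $\E[u_0]$, $\E[v_0]$, the telescoping, and the final averaging over the uniformly drawn $\hat{\vx}_K$ all match the paper's argument.
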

Note that Theorem~\ref{thm:vr-main-result} is generic for both finite-sum and infinite-sum cases. We summarize its implications for both problems in the following corollaries, for specific parameters of Algorithm~\ref{alg:vr-ccd}.
In the remaining results of this section, we assume that the assumptions of Theorem~\ref{thm:vr-main-result} hold.
The proofs are provided in Appendix~\ref{appx:vr-bccd} for completeness.

\begin{restatable}[Finite-sum]{corollary}{finiteComplexity}\label{cor:finite-sum}
Choosing $b = n$, $b' = \sqrt{n}$, and $p = \frac{b'}{b + b'}$, and setting $K = \frac{4(F(\vx_0) - F(\vx^*))}{\epsilon^2\eta}$, we have 
$
\E\Big[{\dist}^2(\partial F(\hat{\vx}_K), \vzero)\Big] \leq \epsilon^2 
$
with $\mathcal{O}\big(nd + \frac{\Delta_0d\sqrt{n(\hat{L} + \tilde{L})}}{\epsilon^2}\big)$ arithmetic operations, where $\Delta_0 = F(\vx_0) - F(\vx^*)$.
\end{restatable}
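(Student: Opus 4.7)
The plan is to specialize the generic bound of Theorem~\ref{thm:vr-main-result} to the finite-sum regime by plugging in the stated parameter choices, and then convert the iteration count into an arithmetic operation count.

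First, I would substitute $b = n$ into Theorem~\ref{thm:vr-main-result}. Every occurrence of the factor $\frac{n-b}{b(n-1)}$ vanishes, which kills the two $\sigma^2$ terms in \eqref{ineq:rate} as well as the $\tilde L$ contribution coming through $\frac{p(n-b)}{b(n-1)}$ inside $c_0$. Thus the bound collapses to
\begin{equation*}
\E\bigl[\mathrm{dist}^2(\partial F(\hat\vx_K),\vzero)\bigr]\le \frac{4\Delta_0}{\eta K},
\qquad
c_0 \;=\; \frac{2(1-p)\hat L}{p b'} \;+\; \hat L \;+\; \frac{2(1-p)\tilde L}{p b'}.
\end{equation*}
With $b' = \sqrt n$ and $p = \frac{b'}{b+b'} = \frac{\sqrt n}{n+\sqrt n}$, one has $\frac{1-p}{p} = \frac{n}{\sqrt n} = \sqrt n$, so $\frac{1-p}{pb'} = 1$, yielding $c_0 = 3\hat L + 2\tilde L = \Theta(\hat L + \tilde L)$.

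Next I would pin down the stepsize and iteration count. The admissible range $0 < \eta \le \frac{-1+\sqrt{1+4c_0}}{2c_0}$ gives $\eta = \Theta\bigl(1/\sqrt{\hat L+\tilde L}\bigr)$, so setting $K = \frac{4\Delta_0}{\epsilon^2\eta}$ indeed forces $\E[\mathrm{dist}^2(\partial F(\hat\vx_K),\vzero)] \le \epsilon^2$ and produces
\begin{equation*}
K \;=\; \mathcal{O}\!\left(\frac{\Delta_0\sqrt{\hat L+\tilde L}}{\epsilon^2}\right).
\end{equation*}

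Finally, I would account for the arithmetic cost. The initial full-gradient computation of $\vg_{-1}$ costs $\mathcal{O}(nd)$. Within a single outer iteration $k$, the inner cyclic sweep of Algorithm~\ref{alg:vr-ccd} computes, with probability $p$, a full batch estimate $\frac{1}{b}\sum_{i\in\mathcal{B}}\nabla^j f_i(\cdot)$ (cost $\mathcal{O}(n d_j)$ per block, hence $\mathcal{O}(nd)$ per cycle) and, with probability $1-p$, a mini-batch difference on $b'=\sqrt n$ samples (cost $\mathcal{O}(\sqrt n\, d_j)$ per block, hence $\mathcal{O}(\sqrt n\, d)$ per cycle). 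The expected per-iteration cost is therefore
\begin{equation*}
p\cdot nd \;+\; (1-p)\cdot \sqrt n\, d \;=\; \frac{\sqrt n}{n+\sqrt n}\cdot nd \;+\; \frac{n}{n+\sqrt n}\cdot \sqrt n\, d \;=\; \mathcal{O}(\sqrt n\, d).
\end{equation*}
Multiplying by $K$ and adding the $\mathcal{O}(nd)$ warm-up yields the claimed $\mathcal{O}\!\bigl(nd + \frac{\Delta_0 d\sqrt{n(\hat L+\tilde L)}}{\epsilon^2}\bigr)$ total.

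The calculation is essentially bookkeeping given Theorem~\ref{thm:vr-main-result}; the only subtle point is verifying that the choice $p = b'/(b+b')$ simultaneously makes $c_0$ dimension-free in $n$ (via $\frac{1-p}{pb'}=1$) and balances the two branches of the gradient oracle so that the expected per-iteration cost is $\Theta(\sqrt n\, d)$ rather than $\Theta(nd)$. This balancing is the only place where the specific $p$ matters, and it is exactly what drives the $\sqrt n$ speedup over computing the full gradient every iteration.
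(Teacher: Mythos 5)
Your proposal is correct and takes essentially the same route as the paper's proof: specialize Theorem~\ref{thm:vr-main-result} with $b=n$ so the variance terms vanish, use $\frac{1-p}{pb'}=1$ to get $c_0 = 3\hat L + 2\tilde L = \mathcal{O}(\hat L + \tilde L)$, take $\eta$ at the top of its admissible range so $K = \mathcal{O}\big(\Delta_0\sqrt{\hat L+\tilde L}/\epsilon^2\big)$, and bound the expected per-iteration cost by $(pb+(1-p)b')d = \mathcal{O}(\sqrt n\, d)$. The paper carries out the same bookkeeping with explicit constants (e.g., $K(pb+(1-p)b') \le 2(\sqrt{1+4c_0}+1)\Delta_0\cdot 2b'/\epsilon^2$) rather than $\Theta$-notation, but there is no substantive difference.
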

\begin{restatable}[Infinite-sum]{corollary}{onlineComplexity}\label{cor:online}
Choosing $b = \min\{\lceil\frac{12\sigma^2}{\epsilon^2}\rceil, n\}$, $b' = \sqrt{b}$, and $p = \frac{b'}{b + b'}$, and setting $K = \frac{12(F(\vx_0) - F(\vx^*))}{\epsilon^2\eta} + \frac{1}{2p}$, we have 
$
\E\Big[{\dist}^2(\partial F(\hat{\vx}_K), \vzero)\Big] \leq \epsilon^2
$
with $\mathcal{O}\big(bd + \frac{\Delta_0d\sqrt{b(\hat{L} + \tilde{L})}}{\epsilon^2}\big)$ arithmetic operations, where $\Delta_0 = F(\vx_0) - F(\vx^*)$.
\end{restatable}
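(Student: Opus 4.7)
The plan is to specialize the generic rate of Theorem~\ref{thm:vr-main-result} to the infinite-sum regime by substituting the prescribed parameter choices and verifying that each of the three error terms in \eqref{ineq:rate} contributes at most $\epsilon^2/3$, yielding a total bound of $\epsilon^2$. For the stationary minibatch-variance term $\frac{4(n-b)\sigma^2}{b(n-1)}$, I would use $(n-b)/(n-1)\leq 1$ together with $b \geq \lceil 12\sigma^2/\epsilon^2\rceil$ to bound it by $4\sigma^2/b \leq \epsilon^2/3$. For the term $\frac{2(1-p)(n-b)\sigma^2}{pb(n-1)K}$, I would invoke the lower bound $K \geq 1/(2p)$ to obtain $1/(pK)\leq 2$ and then the same inequality $\leq 4\sigma^2/b \leq \epsilon^2/3$. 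Finally, the optimization-gap term $\frac{4\Delta_0}{\eta K}\leq \epsilon^2/3$ is immediate from $K \geq 12\Delta_0/(\epsilon^2\eta)$.

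Next I would simplify the stepsize constant $c_0$ from Theorem~\ref{thm:vr-main-result} under the chosen parameters. Because $p = b'/(b+b')$ implies $(1-p)/p = b/b'$, substituting $b' = \sqrt{b}$ gives $(1-p)/(pb') = b/(b')^2 = 1$, so $\frac{2(1-p)\hat{L}}{pb'} = 2\hat{L}$. Similarly, $\frac{n-b}{b(n-1)} \leq 1/b$ reduces the $\tilde{L}$ contribution to $O(\tilde{L})$, yielding $c_0 = O(\hat{L}+\tilde{L})$. Consequently, the admissible stepsize satisfies $\eta = \Omega(1/\sqrt{1+\hat{L}+\tilde{L}})$, and using $1/p = O(\sqrt{b})$ the iteration count becomes $K = O\bigl(\Delta_0\sqrt{\hat{L}+\tilde{L}}/\epsilon^2 + \sqrt{b}\bigr)$.

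For the arithmetic-operation count, the startup minibatch gradient costs $O(bd)$ operations. At each inner step $j \in [m]$, the PAGE-style estimator draws $b$ samples with probability $p$ and $2b'$ samples with probability $1-p$, so the expected number of block-gradient evaluations is $pb + 2(1-p)b' = \Theta(\sqrt{b})$, contributing $O(\sqrt{b}\,d^j)$ per inner step and $O(\sqrt{b}\,d)$ per outer iteration after summing over $j$. Multiplying by $K$ and absorbing the startup cost yields the claimed complexity $O\bigl(bd + \Delta_0 d\sqrt{b(\hat{L}+\tilde{L})}/\epsilon^2\bigr)$. The delicate aspect is that $pb$, $(1-p)b'$, and $1/p$ must all scale as $\Theta(\sqrt{b})$ simultaneously, so that the same $b$ controls both the variance-induced error (forcing $b = \Omega(\sigma^2/\epsilon^2)$) and the expected per-iteration cost; once this coupling is observed, the remaining steps are routine and no inequality beyond those already in Theorem~\ref{thm:vr-main-result} and Lemma~\ref{lem:vr} is required.
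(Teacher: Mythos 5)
Your proposal is correct and follows essentially the same route as the paper's proof: specialize Theorem~\ref{thm:vr-main-result}, bound each of the three error terms by $\epsilon^2/3$ using $b\geq 12\sigma^2/\epsilon^2$ and $K\geq \max\{12\Delta_0/(\epsilon^2\eta),\,1/(2p)\}$, simplify $c_0=\mathcal{O}(\hat{L}+\tilde{L})$ via $(1-p)/(pb')=b/(b')^2$, and multiply the expected per-iteration cost $\mathcal{O}\big((pb+(1-p)b')d\big)=\mathcal{O}(\sqrt{b}\,d)$ by $K$. The only cosmetic difference is that the paper keeps the exact expression $K(pb+(1-p)b')=K\cdot\frac{2bb'}{b+b'}$ and bounds it directly, whereas you pass to order notation earlier; both yield the stated complexity.
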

Under the P{\L} condition, a faster convergence rate can be proved for VR-CCD, as we show next.
\begin{restatable}{corollary}{thmccdvrpl}\label{thm:ccd-vr-pl}
Suppose that F further satisfies Assumption~\ref{assmpt:PL-condition}, then we have for
Algorithm~\ref{alg:vr-ccd},
\begin{equation*}
\begin{aligned}
    \;& \ee[ F(\vx_K) - F(\vx^*)] \\
    \leq \;& \big(1 + \frac{\eta\mu}{2}\big)^{-K}\Big(\Delta_0 + \frac{\sigma^2\eta(1 - p)(n - b)}{pb(n - 1)}\Big) + \frac{4(n - b)\sigma^2}{b\mu(n - 1)},
\end{aligned}
\end{equation*}
where $0 < \eta \leq \min\Big\{\frac{p}{\mu(1 - p)}, \frac{-1 + \sqrt{1 + 4c_0}}{2c_0}\Big\}$ with $c_0 = \hat{L} + \frac{4\hat{L}}{pb'} + \frac{4\tilde{L}}{p}\big(\frac{p(n - b)}{b(n - 1)} + \frac{1 - p}{b'}\big)$, and $\Delta_0 = F(\vx_0) - F(\vx^*)$.
\end{restatable}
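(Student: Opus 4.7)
The plan is to adapt the potential-function argument underlying Theorem~\ref{thm:vr-main-result} and to inject the P{\L} condition so as to convert a rate on the squared gradient norm $\ee[s_k]$ into a geometric decay of the suboptimality gap $\ee[F(\vx_k) - F(\vx^*)]$. First I would combine Lemmas~\ref{lem:vr-descent} and~\ref{lem:vr-grad-norm} exactly as in the proof of Theorem~\ref{thm:vr-main-result}, using Lemma~\ref{lem:vr-grad-norm} to upper-bound the negative block-gradient sum in Lemma~\ref{lem:vr-descent} by $\tfrac{\eta}{2}(\hat{L}v_k - \tfrac{1}{2}s_k)$. Taking expectation and subtracting $F(\vx^*)$ from both sides yields a per-iteration inequality of the form
\[
\ee[F(\vx_k) - F(\vx^*)] + \tfrac{\eta}{4}\ee[s_k] \;\leq\; \ee[F(\vx_{k-1}) - F(\vx^*)] + \tfrac{\eta}{2}\ee[u_k] - \Big(\tfrac{1-\eta}{2\eta}- \tfrac{\eta\hat{L}}{2}\Big)\ee[v_k].
\]

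Next, applying the P{\L} inequality $s_k \ge 2\mu(F(\vx_k) - F(\vx^*))$ would replace $\tfrac{\eta}{4}\ee[s_k]$ by $\tfrac{\eta\mu}{2}\ee[F(\vx_k) - F(\vx^*)]$, producing a $(1 + \eta\mu/2)$ prefactor on the left-hand side. I would then build a two-term Lyapunov function $\Phi_k := \ee[F(\vx_k) - F(\vx^*)] + A\,\ee[u_k]$ and invoke the variance recursion of Lemma~\ref{lem:vr} to rewrite $\ee[u_k]$ on the right in terms of $\ee[u_{k-1}]$, $\ee[v_k]$, $\ee[v_{k-1}]$, and a persistent noise $\tfrac{2p(n-b)\sigma^2}{b(n-1)}$. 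The coefficient $A$ is chosen so that the surviving coefficients of $\ee[v_k]$ and $\ee[v_{k-1}]$ become non-positive; this is precisely where the step-size constraint $\eta \le (-1+\sqrt{1+4c_0})/(2c_0)$ of Theorem~\ref{thm:vr-main-result} re-enters, with $c_0$ updated to account for the extra $u$-weighting, after which the $v$-terms can be dropped.

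What remains is to verify the contraction $\Phi_k \le (1+\eta\mu/2)^{-1}\Phi_{k-1} + \mathcal{N}$, where $\mathcal{N}$ is proportional to $\eta\sigma^2(n-b)/(b(n-1))$. Unrolling $K$ times and bounding the resulting geometric series by $\sum_{k=1}^{K}(1+\eta\mu/2)^{-k} \le 2/(\eta\mu)$ would yield the first claimed term $(1+\eta\mu/2)^{-K}\bigl(\Delta_0 + \sigma^2\eta(1-p)(n-b)/(pb(n-1))\bigr)$ (with the extra additive contribution arising from $A\,\ee[u_0]$, since $\ee[u_0] \le (n-b)\sigma^2/(b(n-1))$ by Lemma~\ref{lem:batch-vr}), and would convert the persistent $\mathcal{N}$ into the noise floor $4(n-b)\sigma^2/(b\mu(n-1))$.

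The main obstacle I anticipate is reconciling two competing contraction rates: the factor $(1-p)$ that Lemma~\ref{lem:vr} applies to $\ee[u_{k-1}]$, and the target rate $(1+\eta\mu/2)^{-1}$ dictated by the P{\L} step. For both components of $\Phi_k$ to contract at the same rate, one needs roughly $(1-p)(1+\eta\mu/2) \le 1$, which (up to constants) is exactly the additional step-size restriction $\eta \le p/(\mu(1-p))$ appearing in the statement. Careful bookkeeping will then be required to pin down $A$ and the updated $c_0$ so that the final constants $2$, $4$, and the coefficient of $\sigma^2$ in the stated bound come out correctly.
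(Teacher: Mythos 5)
Your overall route is the paper's route: combine Lemmas~\ref{lem:vr-descent} and~\ref{lem:vr-grad-norm}, apply the P{\L} inequality to turn $-\tfrac{\eta}{4}s_k$ into a $(1+\tfrac{\eta\mu}{2})$ prefactor, fold in the variance recursion of Lemma~\ref{lem:vr}, and unroll a contraction with the geometric series bounded by $2/(\eta\mu)$; your identification of where $\eta \le p/(\mu(1-p))$ comes from (making the $u$-component contract at rate $(1+\eta\mu/2)^{-1}$ despite the $(1-p)$ factor in Lemma~\ref{lem:vr}), and of $A\,\ee[u_0]$ with $\ee[u_0]\le \tfrac{(n-b)\sigma^2}{b(n-1)}$ as the source of the extra additive term, are exactly what the paper does with $A=\tfrac{\eta(1-p)}{p}$.

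The genuine gap is in your treatment of the $v$-terms. You propose a \emph{two}-term potential $\Phi_k = \ee[F(\vx_k)-F(\vx^*)] + A\,\ee[u_k]$ and claim that $A$ can be chosen so that "the surviving coefficients of $\ee[v_k]$ and $\ee[v_{k-1}]$ become non-positive, after which the $v$-terms can be dropped." That cannot work as stated: the term $\tfrac{2(1-p)\hat{L}}{b'}\ee[v_{k-1}]$ produced by Lemma~\ref{lem:vr} enters the right-hand side with a strictly positive coefficient (scaled by whatever weight you put on the recursion), and within a single iteration there is no negative contribution to $\ee[v_{k-1}]$ that any choice of $A$ could use to cancel it --- the descent lemma only supplies a negative $\ee[v_k]$ term. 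The fix is to carry a third term in the potential, $c\,\ee[v_k]$ with $c=\tfrac{2\eta\hat{L}(1-p)}{pb'}$ as in the paper (or, equivalently, to do a weighted index-shift when summing), and the condition that makes this work, $(1+\tfrac{\eta\mu}{2})\tfrac{2\eta\hat{L}(1-p)}{pb'} \le \tfrac{1}{2}\big(\tfrac{1}{\eta}-1-\hat{L}\eta\big) - \tfrac{2\eta\tilde{L}}{p}\big(\tfrac{p(n-b)}{b(n-1)}+\tfrac{1-p}{b'}\big)$ together with $1+\tfrac{\eta\mu}{2}\le \tfrac{1}{1-p}$, is precisely what generates the constant $c_0$ in the statement. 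With that third term added (and $v_0=0$ so it contributes nothing at initialization), the rest of your bookkeeping goes through and reproduces the paper's proof.
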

In the following, we further provide arithmetic operation complexity results under P{\L} condition with specifying the parameters of the algorithm for both finite-sum and infinite-sum problems.
\begin{restatable}[Finite-sum]{corollary}{finiteComplexityPL}
Choosing $b = n$, $b' = \sqrt{n}$, and $p = \frac{b'}{b + b'}$, and setting $K = \big(1 + \frac{2}{\eta\mu}\big)\log\big(\frac{\Delta_0}{\epsilon}\big)$, we have 
$
\E[F(\vx_K) - F(\vx^*)] \leq \epsilon
$
with $\mathcal{O}\Big(nd + \big(\frac{\sqrt{n(\hat{L} + \tilde{L})}}{\mu} + n\big)d\log\big(\frac{\Delta_0}{\epsilon}\big)\Big)$ arithmetic operations, where $\Delta_0 = F(\vx_0) - F(\vx^*)$.
\end{restatable}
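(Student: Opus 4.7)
My plan is to specialize Corollary~\ref{thm:ccd-vr-pl} to the prescribed parameters and then convert the iteration count into an arithmetic-operation count. The crucial simplification is that $b = n$ makes $(n-b)/(b(n-1)) = 0$, which annihilates both error terms in Corollary~\ref{thm:ccd-vr-pl}. The guarantee therefore collapses to the clean geometric bound
\begin{equation*}
    \ee[F(\vx_K) - F(\vx^*)] \leq (1 + \eta\mu/2)^{-K}\Delta_0.
\end{equation*}
Applying the elementary inequality $\log(1+x) \geq x/(1+x)$ for $x>0$, so that $1/\log(1+\eta\mu/2) \leq 1+2/(\eta\mu)$, shows that the stated choice $K = (1 + 2/(\eta\mu))\log(\Delta_0/\epsilon)$ is sufficient to drive the right-hand side below $\epsilon$.

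Next I would quantify the largest admissible step size. Because $b=n$ eliminates the $p(n-b)/(b(n-1))$ term in $c_0$, and because $p = b'/(b+b') = \sqrt{n}/(n+\sqrt{n})$ with $b' = \sqrt{n}$ yields $pb' = n/(n+\sqrt{n}) \in [\tfrac{1}{2}, 1]$, one obtains $c_0 = \hat{L} + 4\hat{L}/(pb') + 4\tilde{L}(1-p)/(pb') = \Theta(\hat{L} + \tilde{L})$. Hence the first bound in Corollary~\ref{thm:ccd-vr-pl} permits $1/\eta = O(\sqrt{\hat{L}+\tilde{L}})$. The second bound $\eta \leq p/(\mu(1-p)) = \Theta(1/(\mu\sqrt{n}))$ may be tighter, so overall $1/(\eta\mu) = O(\sqrt{n} + \sqrt{\hat{L}+\tilde{L}}/\mu)$.

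Finally I would count arithmetic operations. Initializing $\vg_{-1}$ costs $O(nd)$. In each outer iteration, computing the $m$ block gradients that define $\vg_{k-1}^j$ on a shared minibatch of size $|\gB|$ costs $O(|\gB|\,d)$ in total; thus the expected per-iteration cost is $p\cdot O(nd) + (1-p)\cdot O(\sqrt{n}\,d) = O(\sqrt{n}\,d)$, using $pn = n\sqrt{n}/(n+\sqrt{n}) \leq \sqrt{n}$. Multiplying by $K$ and substituting the bound on $1/(\eta\mu)$ yields
\begin{equation*}
    O\bigl(nd\bigr) + O\bigl(\sqrt{n}\,d\bigr)\cdot K \;=\; O\Bigl(nd + \Bigl(n + \tfrac{\sqrt{n(\hat{L}+\tilde{L})}}{\mu}\Bigr) d\log(\Delta_0/\epsilon)\Bigr),
\end{equation*}
which matches the claim (using $\sqrt{n}\leq n$ to absorb the pure-$\log$ term).

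The main obstacle, which is modest, is handling the two-sided step-size constraint from Corollary~\ref{thm:ccd-vr-pl} cleanly: it is the $\eta \leq p/(\mu(1-p))$ branch, and not the $c_0$-based one, that produces the $n\,d\log(\Delta_0/\epsilon)$ summand in the final complexity. Everything else reduces to substitution, and verifying that the chosen $p$ keeps $pb' = \Theta(1)$ so that the $1/(pb')$ factors inside $c_0$ do not blow up.
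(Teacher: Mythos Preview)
Your proposal is correct and follows essentially the same route as the paper: you specialize Corollary~\ref{thm:ccd-vr-pl} with $b=n$ to kill the $(n-b)$ terms, bound $c_0 = \Theta(\hat L + \tilde L)$ via $pb' \in [1/2,1]$, handle both branches of the step-size constraint (the $p/(\mu(1-p))$ branch producing the $n$ term and the $c_0$ branch producing the $\sqrt{n(\hat L+\tilde L)}/\mu$ term), and multiply by the $O(\sqrt n\,d)$ expected per-iteration cost. One cosmetic remark: the algorithm draws a fresh coin and minibatch for \emph{each} block $j$, not a single shared minibatch per outer iteration, but the expected per-iteration cost is still $(pb+(1-p)b')d = O(\sqrt n\,d)$, so your accounting is unaffected.
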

\begin{restatable}[Infinite-sum]{corollary}{onlineComplexityPL}
Choosing $b = \min\{\lceil\frac{12\sigma^2}{\mu\epsilon}\rceil, n\}$, $b' = \sqrt{b}$, and $p = \frac{b'}{b + b'}$, and setting $K = \big(1 + \frac{2}{\eta\mu}\big)\log\big(\frac{3\Delta_0}{\epsilon}\big)$, we have 
$
\E[F(\vx_K) - F(\vx^*)] \leq \epsilon
$
with $\mathcal{O}\Big(bd + \big(\frac{\sqrt{b(\hat{L} + \tilde{L})}}{\mu} + b\big)d\log\big(\frac{\Delta_0}{\epsilon}\big)\Big)$ arithmetic operations, where $\Delta_0 = F(\vx_0) - F(\vx^*)$.
\end{restatable}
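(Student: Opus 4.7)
The plan is to specialize the general PL-regime guarantee of Corollary~\ref{thm:ccd-vr-pl} to the chosen parameters $b = \min\{\lceil 12\sigma^2/(\mu\epsilon)\rceil, n\}$, $b' = \sqrt{b}$, and $p = b'/(b+b') = 1/(1+\sqrt{b})$, then balance the three sources of error so that they sum to at most $\epsilon$. The algebraic identities driving all the estimates are $(1-p)/p = \sqrt{b}$, $1/(pb') = 1 + 1/\sqrt{b} = O(1)$, and the expected per-outer-iteration sample cost $pb + 2(1-p)b' = O(\sqrt{b})$.

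First I would bound the non-contracting additive term $4(n-b)\sigma^2/(b\mu(n-1))$: when $b = n$ it vanishes, and otherwise $b \geq 12\sigma^2/(\mu\epsilon)$ gives $4\sigma^2/(b\mu) \leq \epsilon/3$. Next, the variance inflation $\sigma^2\eta(1-p)(n-b)/(pb(n-1))$ inside the contracting factor, using $(1-p)/p = \sqrt{b}$ and the step-size constraint $\eta \leq p/(\mu(1-p)) = 1/(\mu\sqrt{b})$ from Corollary~\ref{thm:ccd-vr-pl}, collapses to at most $\sigma^2/(\mu b) \leq \epsilon/12$; assuming without loss of generality that $\Delta_0 \geq \epsilon/12$ (else $\vx_0$ already satisfies the bound), this is at most $\Delta_0$. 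Finally, the choice $K = (1 + 2/(\eta\mu))\log(3\Delta_0/\epsilon)$ together with $(1 + \eta\mu/2)^{-K} \leq \exp(-K\eta\mu/(2+\eta\mu))$ yields a contraction factor of $\epsilon/(3\Delta_0)$. Summing, Corollary~\ref{thm:ccd-vr-pl} produces at most $(\epsilon/(3\Delta_0))(\Delta_0 + \Delta_0) + \epsilon/3 = \epsilon$, as required.

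For the arithmetic operation complexity, the initial full-batch gradient costs $O(bd)$ and each subsequent outer iteration costs $(pb + 2(1-p)b')d = O(\sqrt{b}d)$ in expectation, so the total is $O(bd + K\sqrt{b}d)$. I would then quantify $K$ through the step-size constraint by bounding $c_0 = \hat{L} + 4\hat{L}/(pb') + (4\tilde{L}/p)(p(n-b)/(b(n-1)) + (1-p)/b')$: the chosen parameters give $4\hat{L}/(pb') = O(\hat{L})$, $(4\tilde{L}/p)(p(n-b)/(b(n-1))) = O(\tilde{L}/b)$, and $(4\tilde{L}/p)(1-p)/b' = 4\tilde{L}\sqrt{b}/b' = 4\tilde{L}$, so $c_0 = O(\hat{L} + \tilde{L})$ and $(-1 + \sqrt{1 + 4c_0})/(2c_0) = \Omega(1/\sqrt{\hat{L}+\tilde{L}})$. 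The admissible step-size is $\eta = \Theta(\min\{1/(\mu\sqrt{b}), 1/\sqrt{\hat{L}+\tilde{L}}\})$, so $1/(\eta\mu) = O(\max\{\sqrt{b}, \sqrt{\hat{L}+\tilde{L}}/\mu\})$, and hence $K\sqrt{b}d = O(d(b + \sqrt{b(\hat{L}+\tilde{L})}/\mu)\log(\Delta_0/\epsilon))$. Adding the initial $O(bd)$ cost yields the claimed bound.

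The main obstacle is the first-half bookkeeping: the inflation term inside the contraction must be shown to be dominated by $\Delta_0$, so that it only inflates the effective initial gap by a constant factor rather than costing a polynomial factor in $1/\epsilon$ inside the iteration count; this is precisely why the step-size constraint $\eta \leq 1/(\mu\sqrt{b})$ is built into Corollary~\ref{thm:ccd-vr-pl}. A secondary but important check is verifying that the cyclic cross-term $(4\tilde{L}/p)(1-p)/b' = 4\tilde{L}$ stays $O(\tilde{L})$, which is exactly what fixes the scaling $b' = \sqrt{b}$; any substantially smaller $b'$ would inflate $c_0$ and spoil the step-size bound.
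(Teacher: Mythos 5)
Your proposal is correct and follows essentially the same route as the paper: specialize the P{\L} bound of Corollary~\ref{thm:ccd-vr-pl} to the chosen $b, b', p$, bound the three error terms by fractions of $\epsilon$ using $b \geq 12\sigma^2/(\mu\epsilon)$ (or $(n-b)=0$) and $\eta \leq p/(\mu(1-p)) = 1/(\mu\sqrt{b})$, then bound $c_0 = \mathcal{O}(\hat{L}+\tilde{L})$, $1/(\eta\mu) = \mathcal{O}(\sqrt{b} + \sqrt{\hat{L}+\tilde{L}}/\mu)$, and the expected per-cycle cost $pb + (1-p)b' = \mathcal{O}(\sqrt{b})$ to get the arithmetic complexity. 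The only cosmetic difference is your detour through the assumption $\Delta_0 \geq \epsilon/12$ for the contracted variance-inflation term, which is unnecessary since your own bound of $\epsilon/12$ on that term already suffices (this is how the paper handles it).
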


A few remarks are in order here. 
In addition to requiring matrices $\mLambda_j,$ VR-CCD also requires constants $\hat{L}$ and $\tilde{L}$ to set the learning rate, but these constants are often not readily available in practice. Instead, one can tune the value of the learning rate $\eta,$ as is frequently done in practice for other optimization methods. 
Additionally, our methods require a fresh sample for each block in the cyclic update --- an undesirable feature, because the sample complexity increases with the number of blocks. 
However, this requirement appears only to be an artifact of the analysis. In practice, we can implement a variant of VR-CCD (VRO-CCD) which re-uses the same sample for all the blocks. 
Interestingly, this variant shows even better empirical performance than VR-CCD (see Fig.~\ref{fig:vr-compare} in Appendix~\ref{appx:exp}). Analysis of this variant is beyond the scope of this paper and is an interesting direction for future research. 

\section{Numerical Experiments and Discussion}\label{sec:num-exp}
\begin{figure*}[ht]
    % \centering
    \hspace*{\fill}\subfigure[Train Loss]{\includegraphics[width=0.25\textwidth]{./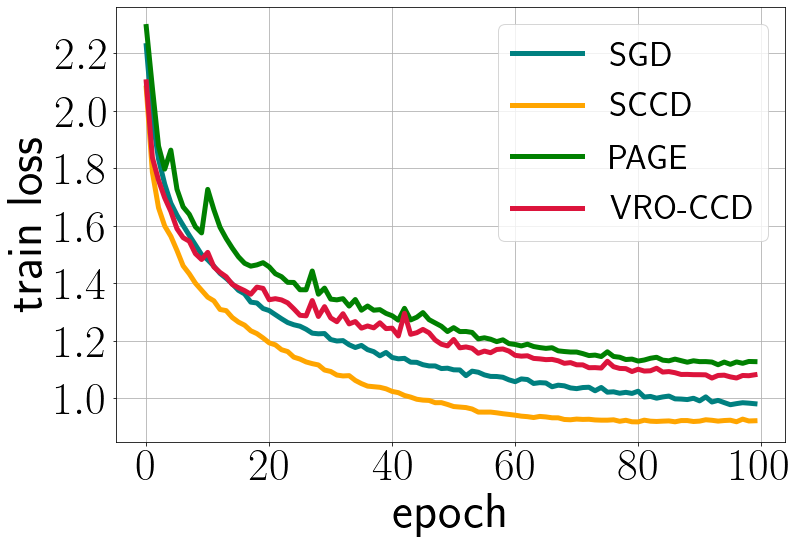}\label{fig:train-loss}}\hfill
    % \subfigure[Test loss of LeNet on MNIST dataset]{\includegraphics[width=0.4\textwidth]{AISTATS/test_loss_mnist.png}\label{fig:test-loss}}\hspace*{\fill}\\
    % \hspace*{\fill}\subfigure[Training accuracy of LeNet on MNIST dataset]{\includegraphics[width=0.4\textwidth]{AISTATS/train_accuracy_mnist.png}\label{fig:train-acc}}\hfill
    \subfigure[Test Accuracy]{\includegraphics[width=0.25\textwidth]{./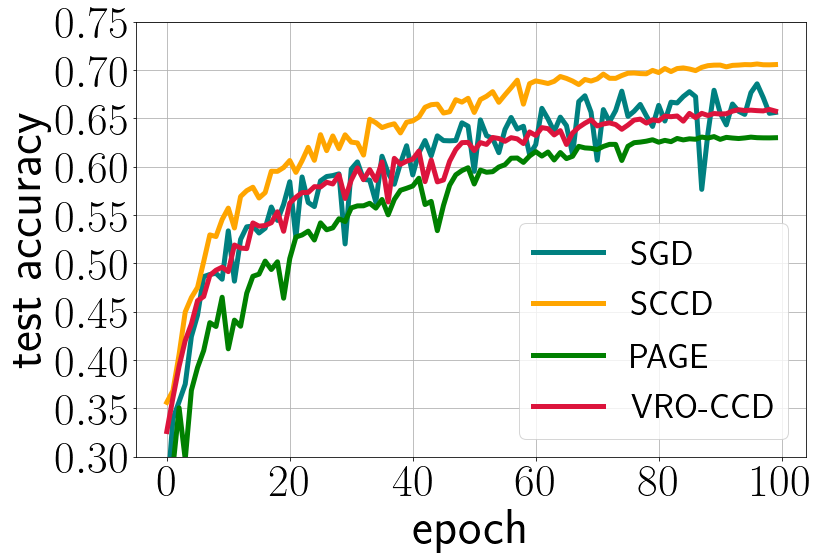}\label{fig:test-acc}}\hfill
    \subfigure[Train Loss (SN)]{\includegraphics[width=0.25\textwidth]{./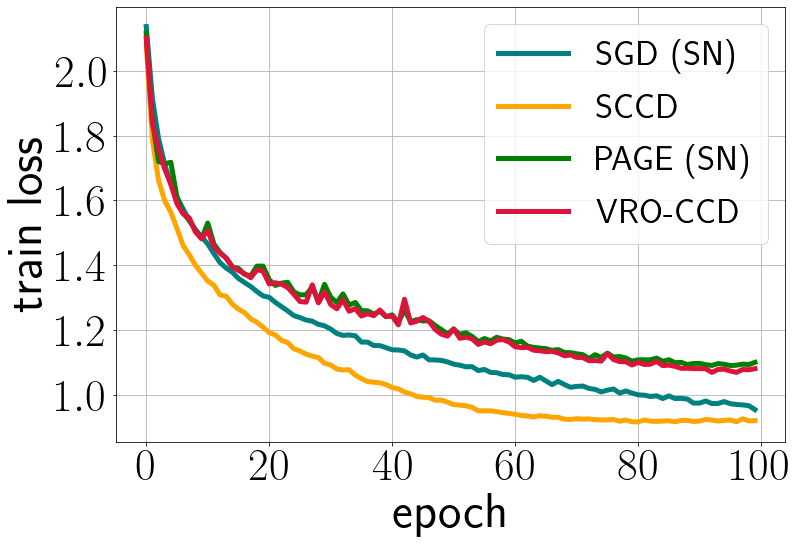}\label{fig:train-loss-sn}}\hfill
    \subfigure[Test Accuracy (SN)]{\includegraphics[width=0.25\textwidth]{./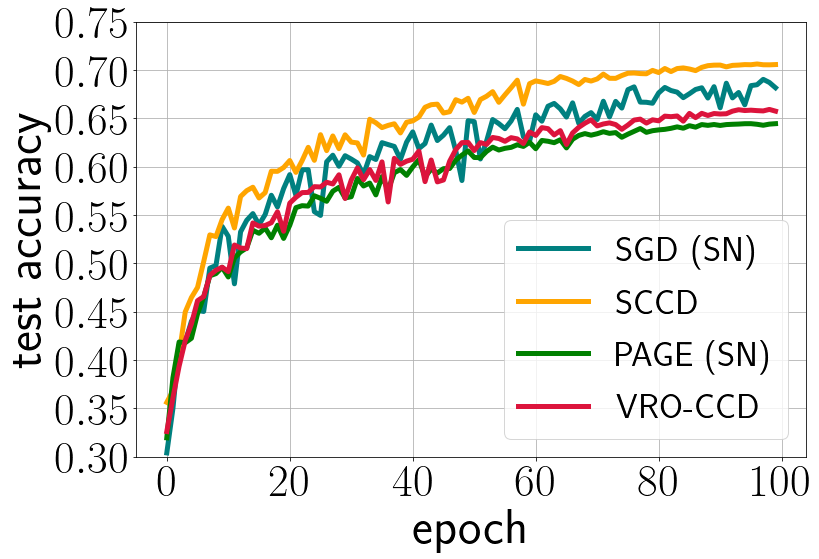}\label{fig:test-acc-sn}}
    \hspace*{\fill}
    \caption{Comparison of SGD, SCCD, PAGE and VRO-CCD on training LeNet on CIFAR-10.
    }
    \label{fig:cifar}
\end{figure*}
We now describe numerical experiments on training neural networks to evaluate the cyclic block coordinate update scheme. In particular, we train LeNet~\citep{lecun1998gradient} with weight decay for the image classification task on CIFAR-10~\citep{krizhevsky2009learning} for our experiments.
% We now describe numerical experiments on training LeNet~\citep{lecun1998gradient} with weight decay for image classification on CIFAR-10~\citep{krizhevsky2009learning} to evaluate the cyclic block coordinate update scheme in deep learning. 
% evaluate the empirical performance of our proposed VRO-CCD (Algorithm~\ref{alg:vr-ccd}).
% and validate the efficacy of cyclic block coordinate update scheme in deep learning. 
Details of the architecture are provided in Appendix~\ref{appx:exp}. 
% We now describe numerical experiments on training neural networks to (i) evaluate the empirical performance of our proposed VR-CCD (Algorithm~\ref{alg:vr-ccd}), and (ii) validate the efficacy of cyclic block coordinate update scheme in deep learning. In particular, we train LeNet~\citep{lecun1998gradient} for the image classification task on the MNIST~\citep{lecun1998gradient} and CIFAR-10~\citep{krizhevsky2009learning} datasets for our experiments. The detailed neural network architecture is provided in Appendix~\ref{appx:exp}. 
% The MNIST dataset is a large collection of black and white handwritten digits 0--9 with 60,000 training examples and 10,000 test examples. CIFAR-10 is a more challenging dataset of color images in 10 classes, which has a training set of 50,000 examples and a test set of 10,000 examples.
% We compare VR-CCD and VRO-CCD with stochastic gradient descent (SGD) and the PAGE algorithm~\citep{li2020page}. Note that VRO-CCD and PAGE only differ in whether the blocks of the variables are cyclically updated or not, thus being a fair comparison to justify the efficacy of the cyclic scheme. All the algorithms are implemented using PyTorch~\citep{paszke2019pytorch}, and we run the experiments on a Linux server with two 32-core AMD 7542 CPUs at 2.9GHz, two NVIDIA Tesla V100S PCIe 32GB GPUs, and 512GB ECC Memory. 
% We compare VRO-CCD with SGD and the PAGE algorithm~\citep{li2020page}. We also implement an empirically efficient implication of VRO-CCD with $p = 1$ (SCCD).
We implement VRO-CCD and its special case SCCD (obtained by setting $p = 1$), and compare them with SGD and PAGE~\citep{li2020page}. 
Note that VRO-CCD and PAGE, SCCD and SGD  differ only in whether the variables are cyclically updated or not, so provide a fair comparison to justify the efficacy of the cyclic scheme. We implement all the algorithms using PyTorch~\citep{paszke2019pytorch}, and run the experiments on Google Colab standard GPU backend.

For all algorithms, we set the mini-batch size $b$ to be $512$ and the weight decay parameter to be $0.0005$. We repeat the experiments 3 times with 100 epochs and average the results. For the learning rate, we use the cosine learning rate scheduler~\citep{loshchilov2016sgdr}, which is tuned  separately for each method via a grid search. For VRO-CCD and PAGE methods, we set  $b' = \sqrt{b}$ and $p = \frac{b'}{b + b'}$ according to the theoretical results. For VRO-CCD and SCCD, we split the neural network parameters with each layer as a block ($m = 5$), and estimate the $\mLambda_j$ of fully connected layers by spectral norms of the weights using spectral normalization (SN)~\citep{miyato2018spectral}. VRO-CCD and SCCD are also compared with SGD and PAGE with the same spectral normalization. We report and plot the train loss and test accuracy against the epoch numbers in Fig.~\ref{fig:cifar}, where one epoch corresponds to the number of arithmetic operations in one data pass, in the order $\mathcal{O}(Nd)$ where $N$ is the size of the training set. We summarize the runtime of each algorithm per iteration\footnote{One iteration for cyclic methods refers to one cycle of block coordinate updates. Since PAGE switches between large and small batches, we only summarize the mean runtime per iteration here.} and per epoch in Table~\ref{table:time}, which is averaged by the results of $100$ epochs. 

\begin{table}[ht]
\caption{Runtime (seconds) per iteration and per epoch.}
\label{table:time}
\vskip 0.15in
\begin{center}
\begin{small}
\begin{sc}
\begin{tabular}{lcccr}
\toprule
Algorithm & Runtime (Iter) & Runtime (Epoch) \\
\midrule
SGD     & 0.172 & 16.9 $\pm$ 0.9 \\
SCCD    & 0.185 & 18.1 $\pm$ 0.6 \\
PAGE    & 0.017 & 20.7 $\pm$ 0.9 \\
% VR-CCD  & 61.9 $\pm$ 1.4 \\
VRO-CCD & 0.041 & 49.5 $\pm$ 2.3 \\
\bottomrule
\end{tabular}
\end{sc}
\end{small}
\end{center}
\vskip -0.1in
\end{table}
From Fig.~\ref{fig:cifar}, we observe that (i) SCCD and VRO-CCD with cyclic scheme exhibit faster convergence with better generalization than SGD and PAGE, respectively, in Figures~\ref{fig:train-loss}~and~\ref{fig:test-acc}; (ii) The edge of cyclic scheme is still noticeable comparing to SGD and PAGE with the same spectral normalization in Figures~\ref{fig:train-loss-sn}~and~\ref{fig:test-acc-sn}. All of these validate the efficacy of the cyclic update scheme. Note that in this experiment SGD and SCCD can admit larger stepsize, thus showing faster convergence (see Fig.~\ref{fig:cifar-same} in Appendix~\ref{appx:exp} for further numerical comparison with same stepsizes).  

When using small batches, VRO-CCD is around $2.4$ times slower than PAGE as in Table~\ref{table:time}, because the cyclic update becomes the major computational bottleneck in each iteration.
% The generation of $p$ and additional batch operations to form $\gB$ and $\gB'$ for PAGE estimator in each iteration is another causes.
However, for large batches, SCCD  sacrifices only a marginal $7.5\%$ runtime per iteration in comparison with SGD. SCCD also converges fastest in terms of wall-clock time (see Fig.~\ref{fig:cifar-time} in Appendix~\ref{appx:exp}, as one may worry about the accumulation of marginal time increase over epochs). 
Our conclusion is that the cyclic scheme can be an efficient and effective alternative for large-batch methods.

%\section{Conclusion}
\section*{Acknowledgements}

XC and CS acknowledge support from NSF DMS 2023239. 
SW acknowledges support from NSF DMS 2023239 and CCF 2224213, AFOSR under subcontract UTA20-001224 from University of Texas-Austin, DOE under subcontract 8F-30039 from Argonne National Laboratory. JD acknowledges support from the U.S.~Office of Naval Research under contract number N000142212348 and from NSF Award CCF 2007757.

\bibliographystyle{plainnat}
\bibliography{reference}

\begin{thebibliography}{62}
\providecommand{\natexlab}[1]{#1}
\providecommand{\url}[1]{\texttt{#1}}
\expandafter\ifx\csname urlstyle\endcsname\relax
  \providecommand{\doi}[1]{doi: #1}\else
  \providecommand{\doi}{doi: \begingroup \urlstyle{rm}\Url}\fi

\bibitem[Alacaoglu et~al.(2017)Alacaoglu, Tran~Dinh, Fercoq, and
  Cevher]{alacaoglu2017smooth}
Ahmet Alacaoglu, Quoc Tran~Dinh, Olivier Fercoq, and Volkan Cevher.
\newblock Smooth primal-dual coordinate descent algorithms for nonsmooth convex
  optimization.
\newblock In \emph{Proc.~NeurIPS'17}, 2017.

\bibitem[Allen-Zhu(2017)]{allen2017katyusha}
Zeyuan Allen-Zhu.
\newblock Katyusha: The first direct acceleration of stochastic gradient
  methods.
\newblock \emph{The Journal of Machine Learning Research}, 18\penalty0
  (1):\penalty0 8194--8244, 2017.

\bibitem[Allen-Zhu et~al.(2016)Allen-Zhu, Qu, Richt{\'a}rik, and
  Yuan]{allen2016even}
Zeyuan Allen-Zhu, Zheng Qu, Peter Richt{\'a}rik, and Yang Yuan.
\newblock Even faster accelerated coordinate descent using non-uniform
  sampling.
\newblock In \emph{Proc.~ICML'16}, 2016.

\bibitem[Beck and Tetruashvili(2013)]{beck2013convergence}
Amir Beck and Luba Tetruashvili.
\newblock On the convergence of block coordinate descent type methods.
\newblock \emph{SIAM journal on Optimization}, 23\penalty0 (4):\penalty0
  2037--2060, 2013.

\bibitem[Blei et~al.(2017)Blei, Kucukelbir, and McAuliffe]{blei2017variational}
David~M Blei, Alp Kucukelbir, and Jon~D McAuliffe.
\newblock Variational inference: A review for statisticians.
\newblock \emph{Journal of the American statistical Association}, 112\penalty0
  (518):\penalty0 859--877, 2017.

\bibitem[Chauhan et~al.(2017)Chauhan, Dahiya, and Sharma]{chauhan2017mini}
Vinod~Kumar Chauhan, Kalpana Dahiya, and Anuj Sharma.
\newblock Mini-batch block-coordinate based stochastic average adjusted
  gradient methods to solve big data problems.
\newblock In \emph{Proc.~ACML'16}, 2017.

\bibitem[Chen and Gu(2016)]{chen2016accelerated}
Jinghui Chen and Quanquan Gu.
\newblock Accelerated stochastic block coordinate gradient descent for sparsity
  constrained nonconvex optimization.
\newblock In \emph{Proc.~UAI'16}, 2016.

\bibitem[Chen et~al.(2021)Chen, Li, and Lu]{chen2021global}
Ziang Chen, Yingzhou Li, and Jianfeng Lu.
\newblock On the global convergence of randomized coordinate gradient descent
  for non-convex optimization.
\newblock \emph{arXiv preprint arXiv:2101.01323}, 2021.

\bibitem[Chow et~al.(2017)Chow, Wu, and Yin]{chow2017cyclic}
Yat~Tin Chow, Tianyu Wu, and Wotao Yin.
\newblock Cyclic coordinate-update algorithms for fixed-point problems:
  Analysis and applications.
\newblock \emph{SIAM Journal on Scientific Computing}, 39\penalty0
  (4):\penalty0 A1280--A1300, 2017.

\bibitem[Cutkosky and Orabona(2019)]{cutkosky2019momentum}
Ashok Cutkosky and Francesco Orabona.
\newblock Momentum-based variance reduction in non-convex {SGD}.
\newblock In \emph{Proc.~NeurIPS'19}, 2019.

\bibitem[Defazio et~al.(2014)Defazio, Bach, and
  Lacoste-Julien]{defazio2014saga}
Aaron Defazio, Francis Bach, and Simon Lacoste-Julien.
\newblock {SAGA}: A fast incremental gradient method with support for
  non-strongly convex composite objectives.
\newblock In \emph{Proc.~NeurIPS'14}, 2014.

\bibitem[Diakonikolas and Orecchia(2018)]{diakonikolas2018alternating}
Jelena Diakonikolas and Lorenzo Orecchia.
\newblock Alternating randomized block coordinate descent.
\newblock In \emph{Proc.~ICML'18}, 2018.

\bibitem[Fang et~al.(2018)Fang, Li, Lin, and Zhang]{fang2018spider}
Cong Fang, Chris~Junchi Li, Zhouchen Lin, and Tong Zhang.
\newblock Spider: Near-optimal non-convex optimization via stochastic
  path-integrated differential estimator.
\newblock In \emph{Proc.~NeurIPS'18}, 2018.

\bibitem[Friedman et~al.(2010)Friedman, Hastie, and
  Tibshirani]{friedman2010regularization}
Jerome Friedman, Trevor Hastie, and Rob Tibshirani.
\newblock Regularization paths for generalized linear models via coordinate
  descent.
\newblock \emph{Journal of Statistical Software}, 33\penalty0 (1):\penalty0 1,
  2010.

\bibitem[Fu et~al.(2020)Fu, Ibrahim, Wai, Gao, and Huang]{fu2020block}
Xiao Fu, Shahana Ibrahim, Hoi-To Wai, Cheng Gao, and Kejun Huang.
\newblock Block-randomized stochastic proximal gradient for low-rank tensor
  factorization.
\newblock \emph{IEEE Transactions on Signal Processing}, 68:\penalty0
  2170--2185, 2020.

\bibitem[Gorbunov et~al.(2020)Gorbunov, Hanzely, and
  Richt{\'a}rik]{gorbunov2020unified}
Eduard Gorbunov, Filip Hanzely, and Peter Richt{\'a}rik.
\newblock A unified theory of sgd: Variance reduction, sampling, quantization
  and coordinate descent.
\newblock In \emph{Proc.~AISTATS'20}, 2020.

\bibitem[Grippof and Sciandrone(1999)]{grippof1999globally}
Luigi Grippof and Marco Sciandrone.
\newblock Globally convergent block-coordinate techniques for unconstrained
  optimization.
\newblock \emph{Optimization Methods and Software}, 10\penalty0 (4):\penalty0
  587--637, 1999.

\bibitem[Gurbuzbalaban et~al.(2017)Gurbuzbalaban, Ozdaglar, Parrilo, and
  Vanli]{gurbuzbalaban2017cyclic}
Mert Gurbuzbalaban, Asuman Ozdaglar, Pablo~A Parrilo, and Nuri Vanli.
\newblock When cyclic coordinate descent outperforms randomized coordinate
  descent.
\newblock In \emph{Proc.~NeurIPS'17}, 2017.

\bibitem[Hong et~al.(2017)Hong, Wang, Razaviyayn, and Luo]{hong2017iteration}
Mingyi Hong, Xiangfeng Wang, Meisam Razaviyayn, and Zhi-Quan Luo.
\newblock Iteration complexity analysis of block coordinate descent methods.
\newblock \emph{Mathematical Programming}, 163\penalty0 (1):\penalty0 85--114,
  2017.

\bibitem[Johnson and Zhang(2013)]{johnson2013accelerating}
Rie Johnson and Tong Zhang.
\newblock Accelerating stochastic gradient descent using predictive variance
  reduction.
\newblock In \emph{Proc.~NeurIPS'13}, 2013.

\bibitem[Kamri et~al.(2022)Kamri, Hendrickx, and Glineur]{kamri2022worst}
Yassine Kamri, Julien~M Hendrickx, and Fran{\c{c}}ois Glineur.
\newblock On the worst-case analysis of cyclic coordinate-wise algorithms on
  smooth convex functions.
\newblock \emph{arXiv preprint arXiv:2211.17018}, 2022.

\bibitem[Keskar et~al.(2017)Keskar, Mudigere, Nocedal, Smelyanskiy, and
  Tang]{keskar2017on}
Nitish~Shirish Keskar, Dheevatsa Mudigere, Jorge Nocedal, Mikhail Smelyanskiy,
  and Ping Tak~Peter Tang.
\newblock On large-batch training for deep learning: Generalization gap and
  sharp minima.
\newblock In \emph{Proc.~ICLR'17}, 2017.

\bibitem[Krizhevsky et~al.(2009)Krizhevsky, Hinton,
  et~al.]{krizhevsky2009learning}
Alex Krizhevsky, Geoffrey Hinton, et~al.
\newblock Learning multiple layers of features from tiny images.
\newblock 2009.

\bibitem[LeCun et~al.(1998)LeCun, Bottou, Bengio, and
  Haffner]{lecun1998gradient}
Yann LeCun, L{\'e}on Bottou, Yoshua Bengio, and Patrick Haffner.
\newblock Gradient-based learning applied to document recognition.
\newblock \emph{Proceedings of the IEEE}, 86\penalty0 (11):\penalty0
  2278--2324, 1998.

\bibitem[Lee and Wright(2019)]{lee2019random}
Ching-Pei Lee and Stephen~J Wright.
\newblock Random permutations fix a worst case for cyclic coordinate descent.
\newblock \emph{IMA Journal of Numerical Analysis}, 39\penalty0 (3):\penalty0
  1246--1275, 2019.

\bibitem[Lei and Shanbhag(2020)]{lei2020asynchronous}
Jinlong Lei and Uday~V Shanbhag.
\newblock Asynchronous variance-reduced block schemes for composite non-convex
  stochastic optimization: block-specific steplengths and adapted batch-sizes.
\newblock \emph{Optimization Methods and Software}, pages 1--31, 2020.

\bibitem[Lei et~al.(2017)Lei, Ju, Chen, and Jordan]{lei2017non}
Lihua Lei, Cheng Ju, Jianbo Chen, and Michael~I Jordan.
\newblock Non-convex finite-sum optimization via {SCSG} methods.
\newblock In \emph{Proc.~NeurIPS'17}, 2017.

\bibitem[Li et~al.(2017)Li, Zhao, Arora, Liu, and Hong]{li2017faster}
Xingguo Li, Tuo Zhao, Raman Arora, Han Liu, and Mingyi Hong.
\newblock On faster convergence of cyclic block coordinate descent-type methods
  for strongly convex minimization.
\newblock \emph{The Journal of Machine Learning Research}, 18\penalty0
  (1):\penalty0 6741--6764, 2017.

\bibitem[Li et~al.(2021)Li, Bao, Zhang, and Richt{\'a}rik]{li2020page}
Zhize Li, Hongyan Bao, Xiangliang Zhang, and Peter Richt{\'a}rik.
\newblock {PAGE}: A simple and optimal probabilistic gradient estimator for
  nonconvex optimization.
\newblock In \emph{Proc.~ICML21}, 2021.

\bibitem[Lin et~al.(2015)Lin, Lu, and Xiao]{lin2015accelerated}
Qihang Lin, Zhaosong Lu, and Lin Xiao.
\newblock An accelerated randomized proximal coordinate gradient method and its
  application to regularized empirical risk minimization.
\newblock \emph{SIAM Journal on Optimization}, 25\penalty0 (4):\penalty0
  2244--2273, 2015.

\bibitem[Loshchilov and Hutter(2017)]{loshchilov2016sgdr}
Ilya Loshchilov and Frank Hutter.
\newblock {SGDR:} stochastic gradient descent with warm restarts.
\newblock In \emph{Proc.~ICLR'17,}, 2017.

\bibitem[Mazumder et~al.(2011)Mazumder, Friedman, and
  Hastie]{mazumder2011sparsenet}
Rahul Mazumder, Jerome~H Friedman, and Trevor Hastie.
\newblock Sparsenet: Coordinate descent with nonconvex penalties.
\newblock \emph{Journal of the American Statistical Association}, 106\penalty0
  (495):\penalty0 1125--1138, 2011.

\bibitem[Miyato et~al.(2018)Miyato, Kataoka, Koyama, and
  Yoshida]{miyato2018spectral}
Takeru Miyato, Toshiki Kataoka, Masanori Koyama, and Yuichi Yoshida.
\newblock Spectral normalization for generative adversarial networks.
\newblock In \emph{Proc.~ICLR'18}, 2018.

\bibitem[Nakamura et~al.(2021)Nakamura, Soatto, and Hong]{nakamura2021block}
Kensuke Nakamura, Stefano Soatto, and Byung-Woo Hong.
\newblock Block-cyclic stochastic coordinate descent for deep neural networks.
\newblock \emph{Neural Networks}, 139:\penalty0 348--357, 2021.

\bibitem[Nesterov(2012)]{nesterov2012efficiency}
Yu~Nesterov.
\newblock Efficiency of coordinate descent methods on huge-scale optimization
  problems.
\newblock \emph{SIAM Journal on Optimization}, 22\penalty0 (2):\penalty0
  341--362, 2012.

\bibitem[Nesterov and Stich(2017)]{nesterov2017efficiency}
Yurii Nesterov and Sebastian~U Stich.
\newblock Efficiency of the accelerated coordinate descent method on structured
  optimization problems.
\newblock \emph{SIAM Journal on Optimization}, 27\penalty0 (1):\penalty0
  110--123, 2017.

\bibitem[Nguyen et~al.(2017)Nguyen, Liu, Scheinberg, and
  Tak{\'a}{\v{c}}]{nguyen2017sarah}
Lam~M Nguyen, Jie Liu, Katya Scheinberg, and Martin Tak{\'a}{\v{c}}.
\newblock Sarah: A novel method for machine learning problems using stochastic
  recursive gradient.
\newblock In \emph{Proc.~ICML'17}, 2017.

\bibitem[Nie et~al.(2021)Nie, Xue, Wu, Wang, Li, and Li]{nie2021coordinate}
Feiping Nie, Jingjing Xue, Danyang Wu, Rong Wang, Hui Li, and Xuelong Li.
\newblock Coordinate descent method for $ k $ k-means.
\newblock \emph{IEEE Transactions on Pattern Analysis and Machine
  Intelligence}, 44\penalty0 (5):\penalty0 2371--2385, 2021.

\bibitem[Nutini et~al.(2015)Nutini, Schmidt, Laradji, Friedlander, and
  Koepke]{nutini2015coordinate}
Julie Nutini, Mark Schmidt, Issam Laradji, Michael Friedlander, and Hoyt
  Koepke.
\newblock Coordinate descent converges faster with the {G}auss-{S}outhwell rule
  than random selection.
\newblock In \emph{Proc.~ICML'15}, 2015.

\bibitem[Paszke et~al.(2019)Paszke, Gross, Massa, Lerer, Bradbury, Chanan,
  Killeen, Lin, Gimelshein, Antiga, et~al.]{paszke2019pytorch}
Adam Paszke, Sam Gross, Francisco Massa, Adam Lerer, James Bradbury, Gregory
  Chanan, Trevor Killeen, Zeming Lin, Natalia Gimelshein, Luca Antiga, et~al.
\newblock Pytorch: An imperative style, high-performance deep learning library.
\newblock In \emph{Proc.~NeurIPS'19}, 2019.

\bibitem[Plummer et~al.(2020)Plummer, Pati, and
  Bhattacharya]{plummer2020dynamics}
Sean Plummer, Debdeep Pati, and Anirban Bhattacharya.
\newblock Dynamics of coordinate ascent variational inference: A case study in
  2d ising models.
\newblock \emph{Entropy}, 22\penalty0 (11):\penalty0 1263, 2020.

\bibitem[Razaviyayn et~al.(2013)Razaviyayn, Hong, and
  Luo]{razaviyayn2013unified}
Meisam Razaviyayn, Mingyi Hong, and Zhi-Quan Luo.
\newblock A unified convergence analysis of block successive minimization
  methods for nonsmooth optimization.
\newblock \emph{SIAM Journal on Optimization}, 23\penalty0 (2):\penalty0
  1126--1153, 2013.

\bibitem[Reddi et~al.(2016)Reddi, Hefny, Sra, Poczos, and
  Smola]{reddi2016stochastic}
Sashank~J Reddi, Ahmed Hefny, Suvrit Sra, Barnabas Poczos, and Alex Smola.
\newblock Stochastic variance reduction for nonconvex optimization.
\newblock In \emph{Proc.~ICML'16}, 2016.

\bibitem[Saha and Tewari(2013)]{saha2013nonasymptotic}
Ankan Saha and Ambuj Tewari.
\newblock On the nonasymptotic convergence of cyclic coordinate descent
  methods.
\newblock \emph{SIAM Journal on Optimization}, 23\penalty0 (1):\penalty0
  576--601, 2013.

\bibitem[Schmidt et~al.(2017)Schmidt, Le~Roux, and Bach]{schmidt2017minimizing}
Mark Schmidt, Nicolas Le~Roux, and Francis Bach.
\newblock Minimizing finite sums with the stochastic average gradient.
\newblock \emph{Mathematical Programming}, 162\penalty0 (1):\penalty0 83--112,
  2017.

\bibitem[Song and Diakonikolas(2021)]{song2021fast}
Chaobing Song and Jelena Diakonikolas.
\newblock Fast cyclic coordinate dual averaging with extrapolation for
  generalized variational inequalities.
\newblock \emph{arXiv preprint arXiv:2102.13244}, 2021.

\bibitem[Song et~al.(2020)Song, Jiang, and Ma]{song2020variance}
Chaobing Song, Yong Jiang, and Yi~Ma.
\newblock Variance reduction via accelerated dual averaging for finite-sum
  optimization.
\newblock In \emph{Proc.~NeurIPS'20}, 2020.

\bibitem[Sun and Ye(2021)]{sun2021worst}
Ruoyu Sun and Yinyu Ye.
\newblock Worst-case complexity of cyclic coordinate descent: {$ O (n^2) $} gap
  with randomized version.
\newblock \emph{Mathematical Programming}, 185\penalty0 (1):\penalty0 487--520,
  2021.

\bibitem[Tseng(2001)]{tseng2001convergence}
Paul Tseng.
\newblock Convergence of a block coordinate descent method for
  nondifferentiable minimization.
\newblock \emph{Journal of Optimization Theory and Applications}, 109\penalty0
  (3):\penalty0 475--494, 2001.

\bibitem[Vandaele et~al.(2016)Vandaele, Gillis, Lei, Zhong, and
  Dhillon]{vandaele2016efficient}
Arnaud Vandaele, Nicolas Gillis, Qi~Lei, Kai Zhong, and Inderjit Dhillon.
\newblock Efficient and non-convex coordinate descent for symmetric nonnegative
  matrix factorization.
\newblock \emph{IEEE Transactions on Signal Processing}, 64\penalty0
  (21):\penalty0 5571--5584, 2016.

\bibitem[Wang et~al.(2016)Wang, Wan, and Chang]{wang2016randomized}
Wenyu Wang, Hong Wan, and Kuo-Hao Chang.
\newblock Randomized block coordinate descendant strong for large-scale
  stochastic optimization.
\newblock In \emph{Proc.~WSC'16}, 2016.

\bibitem[Wright and Lee(2020)]{wright2020analyzing}
Stephen Wright and Ching-pei Lee.
\newblock Analyzing random permutations for cyclic coordinate descent.
\newblock \emph{Mathematics of Computation}, 89\penalty0 (325):\penalty0
  2217--2248, 2020.

\bibitem[Wright(2015)]{wright2015coordinate}
Stephen~J Wright.
\newblock Coordinate descent algorithms.
\newblock \emph{Mathematical Programming}, 151\penalty0 (1):\penalty0 3--34,
  2015.

\bibitem[Wu and Lange(2008)]{wu2008coordinate}
Tong~Tong Wu and Kenneth Lange.
\newblock Coordinate descent algorithms for lasso penalized regression.
\newblock \emph{The Annals of Applied Statistics}, 2\penalty0 (1):\penalty0
  224--244, 2008.

\bibitem[Xu and Yin(2013)]{xu2013block}
Yangyang Xu and Wotao Yin.
\newblock A block coordinate descent method for regularized multiconvex
  optimization with applications to nonnegative tensor factorization and
  completion.
\newblock \emph{SIAM Journal on Imaging Sciences}, 6\penalty0 (3):\penalty0
  1758--1789, 2013.

\bibitem[Xu and Yin(2015)]{xu2015block}
Yangyang Xu and Wotao Yin.
\newblock Block stochastic gradient iteration for convex and nonconvex
  optimization.
\newblock \emph{SIAM Journal on Optimization}, 25\penalty0 (3):\penalty0
  1686--1716, 2015.

\bibitem[Xu and Yin(2017)]{xu2017globally}
Yangyang Xu and Wotao Yin.
\newblock A globally convergent algorithm for nonconvex optimization based on
  block coordinate update.
\newblock \emph{Journal of Scientific Computing}, 72\penalty0 (2):\penalty0
  700--734, 2017.

\bibitem[Zeng et~al.(2014)Zeng, Peng, Lin, and Xu]{zeng2014cyclic}
Jinshan Zeng, Zhimin Peng, Shaobo Lin, and Zongben Xu.
\newblock A cyclic coordinate descent algorithm for lq regularization.
\newblock \emph{arXiv preprint arXiv:1408.0578}, 2014.

\bibitem[Zeng and So(2020)]{zeng2020coordinate}
Wen-Jun Zeng and Hing-Cheung So.
\newblock Coordinate descent algorithms for phase retrieval.
\newblock \emph{Signal Processing}, 169:\penalty0 107418, 2020.

\bibitem[Zheng and Kwok(2016)]{zheng2016fast}
Shuai Zheng and James~T Kwok.
\newblock Fast-and-light stochastic admm.
\newblock In \emph{Proc.~IJCAI'16}, 2016.

\bibitem[Zhou et~al.(2018{\natexlab{a}})Zhou, Xu, and Gu]{zhou2018finding}
Dongruo Zhou, Pan Xu, and Quanquan Gu.
\newblock Finding local minima via stochastic nested variance reduction.
\newblock \emph{arXiv preprint arXiv:1806.08782}, 2018{\natexlab{a}}.

\bibitem[Zhou et~al.(2018{\natexlab{b}})Zhou, Xu, and Gu]{zhou2018stochastic}
Dongruo Zhou, Pan Xu, and Quanquan Gu.
\newblock Stochastic nested variance reduction for nonconvex optimization.
\newblock In \emph{Proc.~NeurIPS'18}, 2018{\natexlab{b}}.

\end{thebibliography}

\newpage
\onecolumn
\appendix
%\section{Omitted Proofs}
\section{Omitted Proofs from Section~\ref{sec:prelim}}\label{appx:prelim}
\finiteVR*
\begin{proof}
Observe first by expanding the square that 
\begin{equation*}
    \begin{aligned}
        \;& \E_{\mathcal{B}}\Big[\big\|\frac{1}{b}\sum_{i\in\gB}\nabla^j f_i(\vx) - \nabla^j f(\vx)\big\|^2\Big] \\
        = \;& \frac{1}{b^2}\E_{\mathcal{B}}\Big[\sum_{i, i' \in \mathcal{B}}\innp{\nabla^j f_i(\vx) - \nabla^j f(\vx), \nabla^j f_{i'}(\vx) - \nabla^j f(\vx)}\Big] \\
        = \;& \frac{1}{b^2}\E_{\mathcal{B}}\Big[\sum_{i \neq i' \in \mathcal{B}}\innp{\nabla^j f_i(\vx) - \nabla^j f(\vx), \nabla^j f_{i'}(\vx) - \nabla^j f(\vx)}\Big] + \frac{1}{b}\E_i\Big[\|\nabla^j f_i(\vx) - \nabla^j f(\vx)\|^2\Big]. 
    \end{aligned}
\end{equation*}
Since the batch $\mathcal{B}$ is drawn independently and uniformly from $[n]$, we know the probability that each pair $(i, i')$ with $i \neq i'$ belongs to $\mathcal{B}$ can be given as $\frac{b(b - 1)}{n(n - 1)}$. Further, by the linearity of expectation, we have 
% \xc{Add more justification here.}
\begin{equation*}
    \begin{aligned}
        \;& \E_{\mathcal{B}}\Big[\sum_{i \neq i' \in \mathcal{B}}\innp{\nabla^j f_i(\vx) - \nabla^j f(\vx), \nabla^j f_{i'}(\vx) - \nabla^j f(\vx)}\Big] \\
        = \;& \E_{\mathcal{B}}\Big[\sum_{i \neq i' \in [n]}\mathbbm{1}_{i \neq i' \in \mathcal{B}}\innp{\nabla^j f_i(\vx) - \nabla^j f(\vx), \nabla^j f_{i'}(\vx) - \nabla^j f(\vx)}\Big]\\
        = \;& \sum_{i \neq i' \in [n]}\E_{\mathcal{B}}\Big[\mathbbm{1}_{i \neq i' \in \mathcal{B}}\innp{\nabla^j f_i(\vx) - \nabla^j f(\vx), \nabla^j f_{i'}(\vx) - \nabla^j f(\vx)}\Big]\\
        = \;& \frac{b(b - 1)}{n(n - 1)}\sum_{i \neq i' \in [n]}\innp{\nabla^j f_i(\vx) - \nabla^j f(\vx), \nabla^j f_{i'}(\vx) - \nabla^j f(\vx)}, 
    \end{aligned}
\end{equation*}
where $\mathbbm{1}$ is the indicator function with $\mathbbm{1}_{i \neq i' \in \mathcal{B}} = 1$ if $i \neq i' \in \mathcal{B}$ otherwise $0$. 
Then we obtain 
\begin{equation*}
    \begin{aligned}
        \;& \E_{\mathcal{B}}\Big[\big\|\frac{1}{b}\sum_{i\in\gB}\nabla^j f_i(\vx) - \nabla^j f(\vx)\big\|^2\Big] \\
        = \;& \frac{b - 1}{bn(n - 1)}\sum_{i \neq i' \in [n]}\innp{\nabla^j f_i(\vx) - \nabla^j f(\vx), \nabla^j f_{i'}(\vx) - \nabla^j f(\vx)} + \frac{1}{b}\E_i\Big[\|\nabla^j f_i(\vx) - \nabla^j f(\vx)\|^2\Big]\\
        = \;& \frac{b - 1}{bn(n - 1)}\sum_{i, i' \in [n]}\innp{\nabla^j f_i(\vx) - \nabla^j f(\vx), \nabla^j f_{i'}(\vx) - \nabla^j f(\vx)} + \Big(\frac{1}{b} - \frac{b - 1}{b(n - 1)}\Big)\E_i\Big[\|\nabla^j f_i(\vx) - \nabla^j f(\vx)\|^2\Big]\\
        \overset{(\romannumeral1)}{=} \;& \frac{n - b}{b(n - 1)}\E_i\Big[\|\nabla^j f_i(\vx) - \nabla^j f(\vx)\|^2\Big], 
    \end{aligned}
\end{equation*}
where $(\romannumeral1)$ is due to the finite-sum structure of $f$, i.e. $f = \frac{1}{n}\sum_{i = 1}^n f_i$. 
\end{proof}
\begin{remark}
The proof for Lemma~\ref{lem:batch-vr} does not involve the specification of the norm, thus applying to $\|\cdot\|_{\mLambda_j^{-1}}$ in the paper.
\end{remark}

\section{Omitted Proofs from Section~\ref{sec:bccd}}\label{appx:bccd}
\lemmaccdgrad*
\begin{proof}
By the definition of $\vx_k^j$ (Step 5 in Algorithm~\ref{alg:ccd}), for each $j \in [m],$ we have that there exists  $r^{j, '}(\vx_k^j) \in \partial r^j(\vx_k^j)$ such that $\vx_k^j = \vx_{k-1}^j - \mLambda_j^{-1}(\nabla^j f(\vx_{k-1}, j) + r^{j, '}(\vx_k^j)).$ Observe that, due to the block separability of $r,$ the vector $r'$ obtained by concatenating such vectors $r^{j, '},$ $j \in [m]$, is a subgradient vector for $r.$  Hence, using block separability of $r$ and  the definitions of the matrix $\mLambda$ and the norm $\|\cdot\|_{\mLambda}$, we have
\begin{align}
\|\nabla f(\vx_k) + r'(\vx_k)\|_{\mLambda^{-1}}^2 = \;&  \sum_{j=1}^m    \|\nabla^j f(\vx_k) + {r^{j, '}}(\vx_k^j)\|_{\mLambda^{-1}_j}^2            \nonumber\\
\le \;& 2 \sum_{j=1}^m   \big( \|\nabla^j f(\vx_k) - \nabla^j f(\vx_{k-1,  j})\|_{\mLambda^{-1}_j}^2 +  \|\nabla^j f(\vx_{k-1,  j}) + {r^{j, '}}(\vx_k^j)\|_{\mLambda_j^{-1}}^2\big)      \nonumber\\
=  \;&  2 \sum_{j=1}^m \big( \|\nabla^j f(\vx_k) - \nabla^j f(\vx_{k-1,  j})\|_{\mLambda^{-1}_j}^2   + \|\mLambda_j  (\vx_k^j - \vx_{k-1}^j)\|_{\mLambda_j^{-1}}^2 \big),
\label{eq:ccd-2}
\end{align}
where the inequality comes from adding and subtracting $\nabla^j f(\vx_{k-1,  j})$ inside the norm terms and using Young's inequality, while the last equality is by $\vx_k^j = \vx_{k-1}^j - \mLambda_j^{-1}(\nabla^j f(\vx_{k-1}, j) + r^{j, '}(\vx_k^j)).$

By Assumption \ref{assmpt:Q-hat}, we have 
\begin{align}
\|\nabla^j f(\vx_k) - \nabla^j f(\vx_{k-1,  j})\|_{\mLambda^{-1}_j}^2 \le (\vx_k -  \vx_{k-1,  j})^T \mQ^j (\vx_k -  \vx_{k-1,  j}),    \label{eq:ccd-grad-Lip}
\end{align}
while using the definition of the norm $\|\cdot\|_{\mLambda_j^{-1}}$, we have
\begin{align}
 \|\mLambda_j (\vx_k^j - \vx_{k-1}^j)\|_{\mLambda_j^{-1}}^2 = \|\vx_k^j - \vx_{k-1}^j\|_{\mLambda_j}^2.    \label{eq:ccd-dist-simplification}
\end{align}

Observe further that $\vx_{k-1,  j}$ has the same elements as $\vx_k$ in the first $j-1$ coordinates. Meanwhile, for $t \ge j,$ let $(\vx_{k-1,  j})_t, (\vx_{k-1})_t$ denote the $t$\textsuperscript{th} blocks of $\vx_{k-1,  j}$ and $\vx_{k-1}$  respectively. Then we have $(\vx_{k-1,  j})_t = (\vx_{k-1})_t,$ thus $(\vx_{k-1,  j})_t - (\vx_{k})_t = (\vx_{k-1})_t - (\vx_{k})_t.$ Thus, based on the definition of $\hat{\mQ}^j$, we have 
\begin{align}
 (\vx_k -  \vx_{k-1,  j})^T \mQ^j (\vx_k -  \vx_{k-1,  j}) = (\vx_k -  \vx_{k-1})^T \hat{\mQ}^j (\vx_k -  \vx_{k-1}).  \label{eq:ccd-Qhat}  
\end{align}

As a result, combining \eqref{eq:ccd-2}--\eqref{eq:ccd-Qhat}, we have
\begin{align}
& \|\nabla f(\vx_k) + r'(\vx_k)\|_{\mLambda^{-1}}^2 \nonumber\\
 \le \;& 2 \sum_{j=1}^m \Big(  (\vx_k - \vx_{k-1})^T \hat{\mQ}^j( \vx_k - \vx_{k-1})  +   \|\vx_k^j - \vx_{k-1}^j\|_{\mLambda_j}^2 \Big)   \nonumber\\
= \;&  2 (\mLambda^{1/2}(\vx_k - \vx_{k-1}))^T \mLambda^{-1/2} \Big( \sum_{j=1}^m  \hat{\mQ}^j\Big)\mLambda^{-1/2}(\mLambda^{1/2}(\vx_k - \vx_{k-1})) 
+ 2\|\vx_k - \vx_{k-1}\|^2_{\mLambda}    \nonumber\\
\leq \;& 2 \Big\| \mLambda^{-1/2} \Big(\sum_{j=1}^m  \hat{\mQ}^j \Big) \mLambda^{-1/2} \Big\|  \|\vx_k - \vx_{k-1}\|^2_{\mLambda}    + 2\|\vx_k - \vx_{k-1}\|^2_{\mLambda}              \nonumber\\
= \;&  2\Big(  \Big\| \mLambda^{-1/2} \Big(\sum_{j=1}^m \hat{\mQ}^j\Big) \mLambda^{-1/2} \Big\|  + 1 \Big) \|\vx_k - \vx_{k-1}\|^2_{\mLambda}. \notag
\end{align}
To complete the proof, it remains to note that, by definition, $\hat{L} = \Big\| \mLambda^{-1/2} \Big(\sum_{j=1}^m \hat{\mQ}^j\Big) \mLambda^{-1/2} \Big\|$ and observe that ${\dist}^2 (\partial F(\vx_k), \vzero) \leq \|\nabla f(\vx_k) + r'(\vx_k)\|_{\mLambda^{-1}},$ for any $r'(\vx_k) \in \partial r(\vx_k).$ 
\end{proof}

\lemmaccddescent*
\begin{proof}
Let $r^{j, '}(\vx_k^j) \in \partial r^j(\vx_k^j)$ be such that $\vx_k^j = \vx_{k-1}^j - \mLambda_j^{-1}(\nabla^j f(\vx_{k-1}, j) + r^{j, '}(\vx_k^j)).$ By Assumption \ref{assmpt:coordinate-smooth} and the definition of $\vx_{k-1, j+1}$, we have 
\begin{align}
F(\vx_{k-1,  j+1}) = \;& f(\vx_{k-1,  j+1}) + r(\vx_{k-1,  j+1}) \nonumber  \\
\le\;&  f(\vx_{k-1,  j}) + \langle \nabla^j  f(\vx_{k-1,  j}), \vx_k^j - \vx_{k-1}^j\rangle + \frac{1}{2}\|\vx_k^j - \vx_{k-1}^j\|_{\mLambda_j}^2   \nonumber  \\
\;& + \sum_{i=1}^{j-1} r^i(\vx_k^i)  + r^j(\vx_k^j) + \sum_{i=j+1}^{m} r^i(\vx_{k-1}^i)   \nonumber  \\  
\le\;&  f(\vx_{k-1,  j}) + r(\vx_{k-1,  j}) + \langle \nabla^j f(\vx_{k-1,  j}) + r^{j,'}(\vx_{k}^j), \vx_k^j - \vx_{k-1}^j\rangle +\frac{1}{2}\|\vx_k^j - \vx_{k-1}^j\|_{\mLambda_j}^2   \nonumber  \\  
=\;&  F(\vx_{k-1,  j}) - \frac{1}{2}\|\vx_k^j - \vx_{k-1}^j\|_{\mLambda_j}^2,   \label{eq:descent}
\end{align}
where the second inequality is by the definition of a subgradient and last equality is by $\vx_k^j = \vx_{k-1}^j - \mLambda_j^{-1}(\nabla^j f(\vx_{k-1}, j) + r^{j, '}(\vx_k^j)).$  
Applying~\eqref{eq:descent} recursively over $j=1,2,\dotsc,m$, we have 
\begin{align}
F(\vx_k) =   F(\vx_{k-1,m+1}) \le\;&  F(\vx_{k-1,1}) - \sum_{j=1}^m \frac{1}{2}\|\vx_k^j - \vx_{k-1}^j\|_{\mLambda_j}^2 \nonumber\\
=\;& F(\vx_{k-1}) -\frac{1}{2} \|\vx_k - \vx_{k-1}\|_{\mLambda }^2.  \label{eq:descent2}
\end{align} 
where the last equality is by $\vx_{k-1,1} = \vx_{k-1}$ and the definition of $\mLambda.$

Telescoping \eqref{eq:descent2} from $i=1$ to $k$, we have
\begin{align}
F(\vx_k) \le F(\vx_0) - \sum_{i=1}^k \frac{1}{2} \|\vx_i - \vx_{i-1}\|_{\mLambda }^2.    
\end{align}

It remains to use that $F(\vx_k)\ge F(\vx^*)$, by the definition of $\vx^*$. 
\end{proof}

\thmccdpl*
\begin{proof}
Combining Lemma~\ref{lem:ccd-grad} and Inequality~\eqref{eq:descent2}, we have
\begin{equation*}
\begin{aligned}
    {\dist}^2(\partial F(\vx_k), \vzero) \le \;&  2(  \hat{L}  + 1) \|\vx_k - \vx_{k-1}\|^2_{\mLambda} \\
    \leq \;& 4(  \hat{L}  + 1) (F(\vx_{k - 1}) - F(\vx_k)).
\end{aligned}
\end{equation*}
Using Assumption~\ref{assmpt:PL-condition} and the last inequality, we have 
\begin{equation*}
    2\mu(F(\vx_k) - F(\vx^*)) \leq 4(  \hat{L}  + 1) (F(\vx_{k - 1}) - F(\vx_k)), 
\end{equation*}
which leads to 
\begin{equation*}
    F(\vx_k) - F(\vx^*) \leq \frac{2(  \hat{L}  + 1)}{2(  \hat{L}  + 1) + \mu}(F(\vx_{k - 1}) - F(\vx^*)).
\end{equation*}
Applying the last inequality recursively from $K$ down to 1, we obtain 
\begin{equation*}
    F(\vx_K) - F(\vx^*) \leq \Big(\frac{2(  \hat{L}  + 1)}{2(  \hat{L}  + 1) + \mu}\Big)^{K}(F(\vx_{0}) - F(\vx^*)), 
\end{equation*}
which completes the proof.
\end{proof}

% \subsection{Omitted Proofs from Section~\ref{sec:pf-bccd}}\label{appx:pf-bccd}

\section{Omitted Proofs from Section~\ref{sec:vr-bccd}}\label{appx:vr-bccd}

To prove Theorem~\ref{thm:vr-main-result}, we first prove the following auxiliary lemma. 
%%%%%%%%%%%%%%%%%%%%%%%%%%%%%%%%%%%%%

\lemmavr*
\begin{proof}
Let $\gF_{k, j - 1}$ denote the natural filtration, containing all algorithm randomness up to and including outer iteration $k$ and inner iteration $j - 1.$ By the definition of $\vg_{k-1}^j,$ we have
\begin{align}
&\E[\|\vg_{k-1}^j - \nabla^j  f(\vx_{k-1,  j})\|_{\mLambda_j^{-1}}^2| \gF_{k, j - 1}]    \nonumber\\   
=\;&  p\E\Big[\big\|\frac{1}{b}\sum_{i=1}^b \nabla^j f_i(\vx_{k-1,  j}) - \nabla^j  f(\vx_{k-1,  j})\big\|_{\mLambda_j^{-1}}^2| \gF_{k, j - 1}\Big] \nonumber\\   
&\;+ (1-p)\E\Big[\big\|\vg_{k-2}^j + \frac{1}{b'} \sum_{i=1}^{b'}  (\nabla^j f_i(\vx_{k-1,  j}) - \nabla^j f_i(\vx_{k-2,j})) - \nabla^j  f(\vx_{k - 1,  j})\big\|_{\mLambda_j^{-1}}^2|\gF_{k, j - 1}\Big]      \nonumber\\   
=\;&p\E\Big[\big\|\frac{1}{b}\sum_{i=1}^b \nabla^j f_i(\vx_{k-1,  j}) - \nabla^j  f(\vx_{k-1,  j})\big\|_{\mLambda_j^{-1}}^2|\gF_{k, j - 1}\Big] +  (1-p)\|\vg_{k-2}^j -  \nabla^j  f(\vx_{k-2,j})\|_{\mLambda_j^{-1}}^2  \nonumber\\   
& + (1-p)\E\Big[\big\| \frac{1}{b'} \sum_{i=1}^{b'}  (\nabla^j f_i(\vx_{k-1,  j}) - \nabla^j f_i(\vx_{k-2,j})) - \nabla^j  f(\vx_{k-1,  j}) +  \nabla^j f(\vx_{k-2,j})\|_{\mLambda_j^{-1}}^2|\gF_{k, j - 1}\Big],\notag
\end{align}
where the last equality uses that $\vg_{k-2}^j -  \nabla^j  f(\vx_{k-2,j})$ is measurable w.r.t.~$\gF_{k, j - 1}$ and $\E[\frac{1}{b'} \sum_{i=1}^{b'}  (\nabla^j f_i(\vx_{k-1,  j}) - \nabla^j f_i(\vx_{k-2,j})) - \nabla^j  f(\vx_{k-1,  j}) +  \nabla^j f(\vx_{k-2,j})|\gF_{k, j - 1}] = 0.$ Using that for any random variable $X,$ $\E[(X - \E[X])^2] \leq \E[X^2]$ and proceeding as in Lemma~\ref{lem:batch-vr} with respect to $\|\cdot\|_{\mLambda_j^{-1}}$, we further have
\begin{align}
&\E[\|\vg_{k-1}^j - \nabla^j  f(\vx_{k-1,  j})\|_{\mLambda_j^{-1}}^2| \gF_{k, j - 1}]    \nonumber\\ 
\le \;& p\E\Big[\big\|\frac{1}{b}\sum_{i=1}^b \nabla^j f_i(\vx_{k-1,  j}) - \nabla^j  f(\vx_{k-1,  j})\big\|_{\mLambda_j^{-1}}^2|\gF_{k, j - 1}\Big] +  (1-p)\|\vg_{k-2}^j -  \nabla^j  f(\vx_{k-2,j})\|_{\mLambda_j^{-1}}^2   \nonumber\\  
\;&+(1-p)\E\Big[\big\| \frac{1}{b'} \sum_{i=1}^{b'}  (\nabla^j f_i(\vx_{k-1,  j}) - \nabla^j f_i(\vx_{k-2,j}))\|_{\mLambda_j^{-1}}^2 |\gF_{k, j - 1}\Big]  %\xc{\text{Maybe better to first take summation outside.}}
\nonumber
\\  
\le\;& \frac{p(n-b)}{b(n-1)}\E_i\big[ \|\nabla^j f_i(\vx_{k-1,  j}) -  \nabla^j  f(\vx_{k-1,  j})\big\|_{\mLambda_j^{-1}}^2\big]           +  (1-p)\big\|\vg_{k-2}^j -  \nabla^j  f(\vx_{k-2,j})\|_{\mLambda_j^{-1}}^2  \nonumber\\           
\;&+\frac{1-p}{b'}\E_i[\| \nabla^j f_i(\vx_{k-1,  j}) - \nabla^j f_i(\vx_{k-2,j})\|_{\mLambda_j^{-1}}^2]. \label{eq:vr-1}
\end{align}

We now proceed to simplify Eq.~\eqref{eq:vr-1}, by simplifying the first and the last term appearing in it. For the last term, we have, using our smoothness assumption, 
% \cb{Here the notation $\nabla^j f_i$ may be not very good.} 
\begin{align}
&\E_i\Big[\big\| \nabla^j f_i(\vx_{k-1,  j}) - \nabla^j f_i(\vx_{k-2,j})\big\|_{\mLambda_j^{-1}}^2\Big] \nonumber\\
\le\;&2\E_i\Big[\big\|\nabla^j f_i(\vx_{k-1,  j}) - \nabla^j f_i(\vx_{k-1})\big\|_{\mLambda_j^{-1}}^2\Big] + 2\E_i\Big[\big\|\nabla^j f_i(\vx_{k-1}) - \nabla^j f_i(\vx_{k-2,j})\big\|_{\mLambda_j^{-1}}^2\Big] \nonumber\\  
\le\;& 2(\vx_{k-1,  j} - \vx_{k-1})^T\mQ^j (\vx_{k-1,  j} - \vx_{k-1}) + 2 (\vx_{k-1} - \vx_{k-2,j})^T \mQ^j(\vx_{k-1} - \vx_{k-2,j})    \nonumber\\
=\;&  2 (\vx_k -  \vx_{k-1})^T \mQt^j (\vx_k -  \vx_{k-1})  + 2 (\vx_{k-1} - \vx_{k-2})^T\mQh^j(\vx_{k-1} - \vx_{k-2}), \label{eq:vr-2} 
\end{align}
where the first inequality comes from adding and subtracting $\nabla^j f_i(\vx_{k-1})$ and using Young's inequality, the second inequality is by Assumption~\ref{assmpt:vr-Q-hat-tilde}, and the last equality is by the definitions of $\vx_{k-1, j}$ and matrices $\mQh^j$ and $\mQt^j.$ 
Summing Eq.~\eqref{eq:vr-2} from $j=1$ to $m$, it follows that 
\begin{align}
& \sum_{j=1}^m  \E_i\Big[ \big\| \nabla^j  f_i(\vx_{k-1,  j}) -\nabla^j  f_i(\vx_{k-2,j})\big\|_{\mLambda_j^{-1}}^2\Big] \nonumber\\   
%\le\;& \sum_{j=1}^m \Big(  (\vx_k -  \vx_{k-1})^T \mQt^j (\vx_k -  \vx_{k-1})  +  (\vx_{k-1} - \vx_{k-2})^T\mQh^j(\vx_{k-1} - \vx_{k-2})\Big) \nonumber\\  
\leq\;&  2(\vx_k -  \vx_{k-1})^T\Big(\sum_{j=1}^m  \mQt^j\Big) (\vx_k -  \vx_{k-1})  +  2(\vx_{k-1} - \vx_{k-2})^T\Big(\sum_{j=1}^m  \mQh^j\Big)(\vx_{k-1} - \vx_{k-2}) \nonumber\\ 
\le\;&2\Big\| \mLambda^{-1/2}\Big( \sum_{j=1}^m   \mQt^j\Big)\mLambda^{-1/2}\Big\|\|\vx_k -  \vx_{k-1}\|_{\mLambda}^2 +  2\Big\| \mLambda^{-1/2}\Big( \sum_{j=1}^m \mQh^j\Big) \mLambda^{-1/2}\Big\|\|\vx_{k-1} - \vx_{k-2}\|_{\mLambda}^2\nonumber\\ 
=\;&2\tilde{L}\|\vx_k -  \vx_{k-1}\|_{\mLambda}^2  + 2\hat{L}\|\vx_{k-1} - \vx_{k-2}\|_{\mLambda}^2. \nonumber
% \le\;& \tilde{L}^2 \|\vx_k -  \vx_{k-1}\|^2 +  \hat{L}^2\|\vx_{k-1} - \vx_{k-2}\|_{\mLambda}^2.\label{eq:vr-ccd-4}
\end{align}
Taking expectation with all the randomness in the algorithm on both sides and applying the tower property of expectation, we have 
\begin{equation}\label{eq:vr-3} 
    \E\Big[\sum_{j=1}^m \big\| \nabla^j  f_i(\vx_{k-1,  j}) -\nabla^j  f_i(\vx_{k-2,j})\big\|_{\mLambda_j^{-1}}^2\Big] \leq 2\tilde{L}\E\|\vx_k -  \vx_{k-1}\|_{\mLambda}^2  + 2\hat{L}\E\|\vx_{k-1} - \vx_{k-2}\|_{\mLambda}^2.
\end{equation}
On the other hand, to simplify the first term, we add and subtract $\nabla^j f_i(\vx_{k-1}) + \nabla^j f(\vx_{k-1})$ and apply Young's inequality to get
\begin{align}
&\E_i\Big[ \big\|\nabla^j f_i(\vx_{k-1,  j}) - \nabla^j f(\vx_{k-1,j})\big\|_{\mLambda_j^{-1}}^2 \Big]\nonumber\\   
=\;& \E_i\Big[ \|\nabla^j f_i(\vx_{k-1,j}) -  \nabla^j f_i(\vx_{k-1}) +   \nabla^j f(\vx_{k-1}) -  \nabla^j f(\vx_{k-1,j}) + \nabla^j f_i(\vx_{k-1}) -  \nabla^j f(\vx_{k-1})\|_{\mLambda_j^{-1}}^2\Big]  \nonumber\\   
\le \;&2\E_i\Big[\|\nabla^j f_i(\vx_{k-1,  j}) -  \nabla^j f_i(\vx_{k-1}) -\big(  \nabla^j f(\vx_{k-1,j}) -  \nabla^j f(\vx_{k-1})\big) \|_{\mLambda_j^{-1}}^2 + \|\nabla^j f_i(\vx_{k-1}) -  \nabla^j f(\vx_{k-1})\|_{\mLambda_j^{-1}}^2\Big].  \nonumber
 \end{align}
 Similarly as before, using that the variance of any random variable is bounded by its second moment, and summing from $j = 1$ to $m$, we further have
 \begin{align}
 &\sum_{j=1}^m\E_i\Big[ \big\|\nabla^j f_i(\vx_{k-1,  j}) - \nabla^j f(\vx_{k-1,j})\big\|_{\mLambda_j^{-1}}^2 \Big]\nonumber\\  
\le \;&2\sum_{j=1}^m\E_i\Big[ \|\nabla^j f_i(\vx_{k-1,  j}) -  \nabla^j f_i(\vx_{k-1})\|_{\mLambda_j^{-1}}^2+  \|\nabla^j f_i(\vx_{k-1}) -  \nabla^j f(\vx_{k-1})\|_{\mLambda_j^{-1}}^2\Big]   \nonumber\\
= \;& 2\sum_{j=1}^m\E_i\Big[\|\nabla^j f_i(\vx_{k-1,  j}) -  \nabla^j f_i(\vx_{k-1})\|_{\mLambda_j^{-1}}^2\Big] + 2\E_i\Big[\|\nabla f_i(\vx_{k-1}) -  \nabla f(\vx_{k-1})\|_{\mLambda^{-1}}^2\Big]\nonumber\\
\le \;&2\tilde{L} \|\vx_k - \vx_{k-1}\|_{\mLambda}^2 + 2\sigma^2, \nonumber
\end{align}
% where in the last inequality we used $\sum_{j=1}^m \|\nabla^j f_i(\vx_{k-1,  j}) -  \nabla^j f_i(\vx_{k-1})\|_{\mLambda_j^{-1}}^2 \leq \tilde{L} \|\vx_k - \vx_{k-1}|\gF_{k-1}\|_{\mLambda}^2$ (by the same arguments as in deriving \eqref{eq:vr-2} and \eqref{eq:vr-3}) 
where in the last inequality we used the arguments as in deriving \eqref{eq:vr-2} and \eqref{eq:vr-3}, and Assumption~\ref{assmpt:finite-var}. Further, taking expectation with all the randomness in the algorithm on both sides and using the tower property of expectation, we have 
\begin{equation}\label{eq:vr-4}
    \E\Big[\sum_{j=1}^m \big\|\nabla^j f_i(\vx_{k-1,  j}) - \nabla^j f(\vx_{k-1,j})\big\|_{\mLambda_j^{-1}}^2\Big] \leq 2\tilde{L} \E\|\vx_k - \vx_{k-1}\|_{\mLambda}^2 + 2\sigma^2.
\end{equation}

Summing Eq.~\eqref{eq:vr-1} from $j=1$ to $m$, taking the expectation w.r.t.~all the randomness in the algorithm on both sides, and applying linearity and the tower property of expectation $\E[\E[\cdot|\gF_{k, j - 1}]] = \E[\cdot]$ for each $j \in [m]$, we have 
\begin{equation}\label{eq:vr-5} 
    \begin{aligned}
        \E\Big[\sum_{j = 1}^m \|\vg_{k-1}^j - \nabla^j  f(\vx_{k-1,  j})\|_{\mLambda_j^{-1}}^2\Big] \leq \;& \frac{p(n-b)}{b(n-1)}\E\Big[\sum_{j = 1}^m \|\nabla^j f_i(\vx_{k-1,  j}) -  \nabla^j  f(\vx_{k-1,  j})\Big\|_{\mLambda_j^{-1}}^2\Big] \\
        & + (1-p)\E\Big[\sum_{j = 1}^m\|\vg_{k-2}^j -  \nabla^j  f(\vx_{k-2,j})\|_{\mLambda_j^{-1}}^2\Big] \\
        & + \frac{1-p}{b'}\E\Big[\sum_{j = 1}^m\| \nabla^j f_i(\vx_{k-1,  j}) - \nabla^j f_i(\vx_{k-2,j})\|_{\mLambda_j^{-1}}^2\Big]
    \end{aligned}
\end{equation}
% \begin{align}
% & \sum_{j=1}^m \E\Big[ \|\vg_{k-1}^j - \nabla^j  f(\vx_{k-1,  j})\|_{\mLambda^{-1}}^2|\gF_{k-1, j}\Big]    \nonumber\\
% \le\;&  \frac{2p(n-b)\sigma^2}{b(n-1)} +   (1-p)\sum_{j=1}^m\big\|\vg_{k-2}^j -  \nabla^j  f(\vx_{k-2,j})\|_{\mLambda^{-1}}^2   \nonumber\\
% & + 2 \Big(\frac{p(n-b)}{b(n-1)}+\frac{1-p}{b'}\Big)\tilde{L}\E[\|\vx_k -  \vx_{k-1}\|_{\mLambda}^2|\gF_{k-1}]  + \frac{2(1-p)\hat{L}}{b'} \|\vx_{k-1} - \vx_{k-2}\|_{\mLambda}^2.  \label{eq:vr-5} 
% \end{align}
% To complete the proof, it remains to take the expectation w.r.t.~all the randomness in the algorithm on both sides of \eqref{eq:vr-5} and apply linearity and the tower property of expectation $\E[\E[\cdot|\gF_{k-1, j}]] = \E[\cdot].$ 
To complete the proof, it remains to plug Inequalities~\eqref{eq:vr-3}--\eqref{eq:vr-4} into Inequality~\eqref{eq:vr-5} and do some rearrangements.
\end{proof}

\lemmavrdescent*
\begin{proof}
By Assumption \ref{assmpt:coordinate-smooth} and the definitions of $\vx_{k-1,  j+1}$ and $\vx_{k-1,  j}$, we have 
\begin{align}
\;& F(\vx_{k-1,  j+1}) = f(\vx_{k-1,  j+1}) + r(\vx_{k-1,  j+1}) \nonumber\\   
\le\;& f(\vx_{k-1,  j}) + \langle \nabla^j  f(\vx_{k-1,  j}), \vx_k^j - \vx_{k-1}^j\rangle + \frac{1}{2}\|\vx_k^j - \vx_{k-1}^j\|_{\mLambda_j}^2 + \sum_{i=1}^{j-1} r^i(\vx_k^i)  + r^j(\vx_k^j) + \sum_{i=j+1}^{m} r^i(\vx_{k-1}^i) \nonumber\\ 
\le \;& f(\vx_{k-1,  j}) + r(\vx_{k-1,  j})  +  \langle \nabla^j  f(\vx_{k-1,  j})  + r^{j, '}(\vx_k^j), \vx_k^j - \vx_{k-1}^j\rangle + \frac{1}{2}\|\vx_k^j - \vx_{k-1}^j\|_{\mLambda_j}^2  \nonumber\\  
 = \;& F(\vx_{k-1,  j}) +  \langle\vg_{k-1}^j +  r^{j, '}(\vx_k^j), \vx_k^j - \vx_{k-1}^j\rangle + \frac{1}{2}\|\vx_k^j - \vx_{k-1}^j\|_{\mLambda_j}^2 +  \langle \nabla^j  f(\vx_{k-1,  j}) - \vg_{k-1}^j, \vx_k^j - \vx_{k-1}^j\rangle.  \nonumber  
  \end{align}
  Since, by assumption, $r^{j, '}(\vx_{k}^j) \in \partial r^j(\vx_{k}^j)$ is such that $\vx_k^j = \vx_{k-1}^j - \eta\mLambda_j^{-1}(\vg_{k-1}^j + r^{j, '}(\vx_k^j)),$ we further have
  \begin{align}
   F(\vx_{k-1,  j+1}) 
\leq\;&  F(\vx_{k-1,  j}) -\big(\frac{1}{\eta} -\frac{1}{2}\big)\|\vx_k^j - \vx_{k-1}^j\|_{\mLambda_j}^2 +  \Big\langle \nabla^j  f(\vx_{k-1,  j}) - \vg_{k-1}^j, -  \eta\mLambda_j^{-1}  (\vg_{k-1}^j + r^{j, '}(\vx_k^j)) \Big\rangle. \label{eq:vr-ccd-1}
\end{align}
To simplify \eqref{eq:vr-ccd-1}, we now further simplify the last term appearing in it, as follows: 
\begin{align}
& \Big\langle \nabla^j  f(\vx_{k-1,  j}) - \vg_{k-1}^j, - \eta\mLambda_j^{-1}  (\vg_{k-1}^j +   r^{j, '}(\vx_k^j)) \Big\rangle \nonumber  \\  
=\;& \eta \| \nabla^j  f(\vx_{k-1,  j}) - \vg_{k-1}^j\|_{\mLambda_j^{-1}}^2 - \eta \langle \nabla^j  f(\vx_{k-1,  j}) - \vg_{k-1}^j, \mLambda_j^{-1}  \big(\nabla^j f(\vx_{k-1,  j}) + r^{j, '}(\vx_k^j)\big)\rangle     \nonumber  \\
= \;& \eta \| \nabla^j  f(\vx_{k-1,  j}) - \vg_{k-1}^j\|_{\mLambda_j^{-1}}^2 \nonumber  \\
& - \frac{\eta}{2}\Big(\|\nabla^j f(\vx_{k-1,  j}) - \vg_{k-1}^j\|_{\mLambda_j^{-1}}^2 +  \|\nabla^j f(\vx_{k-1,  j}) + r^{j, '}(\vx_k^j)\|_{\mLambda_j^{-1}}^2 - \|\vg_{k-1}^j+ r^{j, '}(\vx_k^j)\|_{\mLambda_j^{-1}}^2\Big)      \nonumber  \\ 
=\;& \frac{\eta}{2}\| \nabla^j  f(\vx_{k-1,  j}) - \vg_{k-1}^j\|_{\mLambda_j^{-1}}^2 - \frac{\eta}{2}\|\nabla^j f(\vx_{k-1,  j}) + r^{j, '}(\vx_k^j)\|_{\mLambda_j^{-1}}^2 + \frac{\eta}{2}\|\vg_{k-1}^j + r^{j, '}(\vx_k^j) \|_{\mLambda_j^{-1}}^2\nonumber  \\
=\;& \frac{\eta}{2}\| \nabla^j  f(\vx_{k-1,  j}) - \vg_{k-1}^j\|_{\mLambda_j^{-1}}^2 -\frac{\eta}{2} \|\nabla^j f(\vx_{k-1,  j}) + r^{j, '}(\vx_k^j)\|_{\mLambda_j^{-1}}^2 + \frac{1}{2\eta} \|\vx_k^j - \vx_{k-1}^j\|_{\mLambda_j}^2.  \label{eq:vr-ccd-2}
\end{align}
Thus, combining \eqref{eq:vr-ccd-1} and \eqref{eq:vr-ccd-2}, we have 
\begin{align}
 F(\vx_{k-1,  j+1}) \le\;&  F(\vx_{k-1,  j}) -\frac{1}{2}\big(\frac{1}{\eta} - 1\big) \|\vx_k^j - \vx_{k-1}^j\|_{\mLambda_j}^2 + \frac{\eta}{2} \| \nabla^j  f(\vx_{k-1,  j}) - \vg_{k-1}^j\|_{\mLambda_j^{-1}}^2 \nonumber  \\
\;&  - \frac{\eta}{2} \|\nabla^j f(\vx_{k-1,  j}) + r^{j, '}(\vx_k^j)\|_{\mLambda_j^{-1}}^2. \label{eq:vr-ccd-22}
\end{align}

Summing \eqref{eq:vr-ccd-22} from $j=1$ to $m$, 
\begin{align}
 F(\vx_{k-1,  m+1}) \le\;&  F(\vx_{k-1, 1})  - \frac{1}{2} \big(\frac{1}{\eta} - 1\big)\|\vx_k - \vx_{k-1}\|_{\mLambda}^2 + \frac{\eta}{2}\sum_{j=1}^m \| \nabla^j  f(\vx_{k-1,  j}) - \vg_{k-1}^j\|_{\mLambda_j^{-1}}^2 \nonumber  \\
& - \frac{\eta}{2}\sum_{j=1}^m\|\nabla^j f(\vx_{k-1,  j}) + r^{j, '}(\vx_k^j)\|_{\mLambda_j^{-1}}^2. \label{eq:vr-ccd-5}
\end{align}
To complete the proof, it remains to observe that $\vx_k = \vx_{k-1,  m+1}$ and $\vx_{k-1} = \vx_{k-1, 1}.$ 
\end{proof}

\lemmavrgradnorm*
\begin{proof}
Observe first that  
\begin{align}
s_k \leq &\;\|\nabla f(\vx_k) + r'(\vx_k)\|_{\mLambda^{-1}}^2 \nonumber\\
= \;&\sum_{j=1}^m  \| \nabla^j f(\vx_k) + r^{j,'}(\vx_k)\|_{\mLambda_j^{-1}}^2    \nonumber\\
\le\;& \sum_{j=1}^m 2 \big(\|\nabla^j f(\vx_k) - \nabla^j f(\vx_{k-1,j})\|_{\mLambda_j^{-1}}^2 + \| \nabla^j f(\vx_{k-1,j}) + r^{j,'}(\vx_k)\|_{\mLambda_j^{-1}}^2\big) ,\label{eq:last-lem-1}
\end{align}
where we have used Young's inequality. On the other hand, by Assumption~\ref{assmpt:vr-Q-hat-tilde} and the definitions of $\vx_{k-1,j}$ and $\mQh^j,$ we have 
\begin{align}
\sum_{j=1}^m \|\nabla^j f(\vx_k) - \nabla^j f(\vx_{k-1,j})\|_{\mLambda_j^{-1}}^2 
\le\;& \sum_{j=1}^m (\vx_k - \vx_{k-1, j})^T \mQ^j (\vx_k - \vx_{k-1, j})\notag\\
=&\; \sum_{j=1}^m (\vx_k - \vx_{k-1})^T \mQh^j (\vx_k - \vx_{k-1}) \notag\\
\leq&\; \Big\|\mLambda^{-1/2}  \sum_{j=1}^m \hat{\mQ}^j  \mLambda^{-1/2}\Big\|\|\vx_k - \vx_{k-1}\|_{\mLambda}^2
 \nonumber\\
=\;&  \hat{L} \|\vx_k - \vx_{k-1}\|_{\mLambda}^2.  \label{eq:last-lem-2}
\end{align}
Combining \eqref{eq:last-lem-1} and \eqref{eq:last-lem-2} completes the proof. 
\end{proof}

\theoremvrmainresult*
\begin{proof}
Combining Lemmas \ref{lem:vr-descent} and \ref{lem:vr-grad-norm}, we have 
\begin{align}
F(\vx_{k}) \le\;&  F(\vx_{k-1})  - \frac{1}{2} \big(\frac{1}{\eta} - 1 - \hat{L}\eta\big)v_k + \frac{\eta}{2}u_k - \frac{\eta}{4}s_k. \label{eq:vr-main-result1}
\end{align}
For notational convenience, denote $r_k = F(\vx_k)$; then 
% In the following, for simplicity, we denote 
% \begin{align}
% &u_k = \sum_{j=1}^m  \| \nabla^j  f(\vx_{k-1,  j}) - \vg_{k-1}^j\|_{\mLambda_j}^2, \quad v_k= \|\vx_k - \vx_{k-1}\|_{\mLambda}^2,  \nonumber  \\   
% &r_k = F(\vx_k), s_k =\|\nabla f(\vx_k) + r'(\vx_k)\|_{\mLambda^{-1}}^2. \label{eq:vr-main-result2}
% \end{align}
Eq.~\eqref{eq:vr-main-result1} is equivalent to 
\begin{align}
r_k \le r_{k-1} -  \frac{1}{2} \big(\frac{1}{\eta} - 1 - \hat{L}\eta \big)v_k + \frac{\eta}{2}u_k - \frac{\eta}{4}s_k.  \label{eq:vr-main-result3}
\end{align}
% Then Eq.~\eqref{eq:vr-bound} in Lemma \ref{lem:vr} is equivalent to 
% \begin{align}
% \E[u_k] \le  \frac{2p(n-b)\sigma^2}{b(n-1)} + (1-p) \E[u_{k-1}] +    2\Big(\frac{p(n-b)}{b(n-1)}+\frac{1-p}{b'}\Big)\tilde{L}\E[v_k] +  \frac{2(1-p)\hat{L}}{b'}\E[v_{k-1}].   \label{eq:vr-main-result4}
% \end{align}
Taking expectation on both sides of Eq.~\eqref{eq:vr-main-result3} with respect to all randomness of the algorithm, and adding $\frac{\eta}{2p}\times \eqref{eq:vr-bound}$ to \eqref{eq:vr-main-result3}, we have 
\begin{align}
\E[r_k] +  \frac{(1-p)\eta}{2p} \E[u_k] \le\;& \E[r_{k-1}] + \frac{(1-p)\eta}{2p}\E[u_{k-1}] + \frac{(n-b)\sigma^2 \eta}{b(n-1)}  - \frac{\eta}{4}\E[s_k]  \nonumber \\
& - \frac{1}{2}\bigg(\frac{1}{\eta} - 1 - \hat{L}\eta - 4\Big(\frac{p(n-b)}{b(n-1)}+\frac{1-p}{b'}\Big)\frac{\tilde{L} \eta}{2p} \bigg)\E[v_k] + \frac{(1-p)\hat{L}\eta}{pb'}\E[v_{k-1}].      \nonumber
\end{align}
% By setting $\eta$ satisfying 
Observe that in the last inequality, the terms corresponding to $\E[r_k]$ and $\E[u_k]$ telescope. To further simplify this inequality, we now make a choice of $\eta$ that ensures that the terms that correspond to $\E[v_k]$ telescope as well. In particular, for the terms corresponding to $\E[v_k]$ and $\E[v_{k-1}]$ to be telescoping, we need
\begin{align}\label{eq:vk-telescoping-cond}
 \frac{1}{\eta} - 1 - \hat{L}\eta - 4  \Big(\frac{p(n-b)}{b(n-1)}+\frac{1-p}{b'}\Big)\frac{\tilde{L} \eta}{2p} \ge \frac{2(1-p)\hat{L}\eta}{p b'}.
\end{align}
To simplify \eqref{eq:vk-telescoping-cond}, 
denote by $c_0 = \frac{2(1-p)\hat{L}}{pb'} + \hat{L} + 2\Big( \frac{p(n-b)}{b(n-1)}+\frac{1-p}{b'}  \Big)\frac{\tilde{L}}{p}$ the coefficient multiplying $\eta$. Then, solving \eqref{eq:vk-telescoping-cond}, which is a quadratic inequality in $\eta,$ we get that it suffices to have  
\begin{align*}
0<\eta \le \frac{-1 + \sqrt{1+4c_0}}{2c_0},      
\end{align*}
as required by the theorem assumptions. 
Thus, we obtain 
\begin{align}
&\frac{\eta}{4}\E[s_k] +  \E[r_k] +  \frac{(1-p)\eta}{2p}\E[u_k] + \frac{(1-p)\hat{L}\eta}{pb'}\E[v_k]\nonumber  \\  
\le\;& \E[r_{k-1}] +  \frac{(1-p)\eta}{2p}\E[u_{k-1}]  
  +  \frac{(1-p)\hat{L}\eta}{pb'}\E[v_{k-1}]
  + \frac{(n-b)\sigma^2\eta}{b(n-1)}.  \label{eq:vr-main-result5}
\end{align}
Telescoping Eq.~\eqref{eq:vr-main-result5} from $1$ to $k,$ we have 
\begin{align}
 &\sum_{i=1}^k \frac{\eta}{4}\E[s_i] +  \E[r_k] + \frac{(1-p)\eta}{2p}\E[u_k] + \frac{(1-p)\hat{L}\eta}{pb'}\E[v_k]\nonumber  \\  
\le\;& \E[r_{0}] + \frac{(1-p)\eta}{2p}\E[u_{0}]  
  +  \frac{(1-p)\hat{L}\eta}{pb'}\E[v_{0}]
  + \frac{(n-b)\sigma^2\eta k}{b(n-1)}.  \label{eq:vr-main-result6}
\end{align}

With the fact that $r_k = F(\vx_k) \ge F(\vx^*), u_k \ge 0, v_k \ge 0$, $\E[u_0 ]= \E\Big[ \sum_{j=1}^m \| \nabla^j  f(\vx_{-1,  j}) - \vg_{-1}^j\|_{\mLambda_j^{-1}}^2\Big]   \le \frac{(n-b)\sigma^2}{b(n-1)}$ (by Lemma~\ref{lem:batch-vr} adapted to $\mLambda_j$ norms),   $v_0= \|\vx_0 - \vx_{-1}\|_{\mLambda}^2 = 0$ and the definition of $s_k,$ we have 
\begin{align}\label{ineq:sum-rate}
\sum_{i=1}^k \frac{\eta}{4}\E\Big[{\dist}^2(\partial F(\vx_i), \vzero)\Big] \le F(\vx_0) - F(\vx^*) +   \frac{(1-p)(n-b)\eta\sigma^2}{2pb(n-1)} + \frac{(n-b)\sigma^2\eta k}{b(n-1)}.     
\end{align}
Further, taking $\hat{\vx}_K$ to be uniformly at random chosen from $\{\vx_k\}_{k \in [K]}$, we have 
\begin{equation*}
\E_k\Big[{\dist}^2(\partial F(\hat{\vx}_K), \vzero)\Big] = \frac{1}{K}\sum_{i = 1}^K{\dist}^2(\partial F(\vx_i), \vzero).
\end{equation*}
Taking expectation w.r.t.~all the randomness up to iteration $K$ on both sides and using Inequality~\eqref{ineq:sum-rate}, then we have 
\begin{equation*}
    \begin{aligned}
        \E\Big[{\dist}^2(\partial F(\hat{\vx}_K), \vzero)\Big] = \;& \frac{1}{K}\sum_{i = 1}^K\E\Big[{\dist}^2(\partial F(\vx_i), \vzero)\Big] \\
        \leq \;& \frac{4\Delta_0}{\eta K} + \frac{2(1-p)(n-b)\sigma^2}{pb(n-1) K} + \frac{4(n-b)\sigma^2}{b(n-1)}, 
    \end{aligned}
\end{equation*}
where $\Delta_0 = F(\vx_0) - F(\vx^*)$, thus completing the proof. 
% Using the definition of $s_k, r_0, \{c_i\}$, we obtain Theorem \ref{thm:vr-main-result}.
\end{proof}

\finiteComplexity*
\begin{proof}
By the chosen parameters and Inequality~\eqref{ineq:rate}, we have 
\begin{equation*}
\begin{aligned}
\E\Big[{\dist}^2(\partial F(\hat{\vx}_K), \vzero)\Big] \leq \frac{4(F(\vx_0) - F(\vx^*))}{\eta K} + \frac{2(1-p)(n-b)\sigma^2}{pb(n-1) K} + \frac{4(n-b)\sigma^2}{b(n-1)} \leq \epsilon^2.
\end{aligned}
\end{equation*}
Further, since $0 < \eta \leq \frac{-1 + \sqrt{1 + 4c_0}}{2c_0}$, we have $K = \frac{4(F(\vx_0) - F(\vx^*))}{\epsilon^2\eta} = \frac{8c_0(F(\vx_0) - F(\vx^*))}{\epsilon^2(-1 + \sqrt{1 + 4c_0})}$. Let $\gF_{k, j - 1}$ denote the natural filtration, containing all algorithm randomness up to and including outer iteration $k$ and inner iteration $j - 1$. Denote $m_k^j$ to be the number of arithmetic operations to update the $j$-th block at $k$-th iteration, then we have for $k \geq 1$
\begin{equation*}
    \E[m_k^j | \gF_{k, j - 1}] = \mathcal{O}\Big((pb + (1 - p)b')d^j\Big).
\end{equation*}
Taking expectation w.r.t to all randomness on both sides, we obtain $\E[m_k^j] = \mathcal{O}\Big((pb + (1 - p)b')d^j\Big).$ Let $m_k$ be the number of arithmetic operations in the $k$-the iteration, then we have for $k \geq 1$
\begin{equation*}
    \E[m_k] = \E\Big[\sum_{j = 1}^m m_k^j\Big] = \mathcal{O}\Big((pb + (1 - p)b')\sum_{j = 1}^md^j\Big) = \mathcal{O}\Big((pb + (1 - p)b')d\Big).
\end{equation*}
Hence, the total number of arithmetic operations $M$ in $K$ iterations to obtain $\epsilon$-accurate solution is 
\begin{equation*}
    \E[M] = \E\Big[\sum_{k = 0}^K m_k\Big] = \mathcal{O}(bd) + \E\Big[\sum_{k = 1}^K m_k\Big] = \mathcal{O}\Big(bd + K(pb + (1 - p)b')d\Big).
\end{equation*}
Since $b = n$, $b' = \sqrt{b}$ and $p = \frac{b'}{b + b'}$, then we have 
\begin{equation*}
K(pb + (1 - p)b') = \frac{8c_0(F(\vx_0) - F(\vx^*))}{\epsilon^2(-1 + \sqrt{1 + 4c_0})}\frac{2bb'}{b + b'} \leq \frac{2(\sqrt{1 + 4c_0} + 1)(F(\vx_0) - F(\vx^*))}{\epsilon^2}2b'. 
\end{equation*}
Note that 
\begin{equation*}
c_0 = \frac{2(1-p)\hat{L}}{pb'} + \hat{L} + 2\Big( \frac{p(n-b)}{b(n-1)}+\frac{1-p}{b'}\Big)\frac{\tilde{L}}{p} = \hat{L} + \frac{1 - p}{pb'}(2\hat{L} + 2\tilde{L}) = \hat{L} + \frac{b}{b'^2}(2\hat{L} + 2\tilde{L}) = 3\hat{L} + 2\tilde{L} = \mathcal{O}(\hat{L} + \tilde{L}),
\end{equation*}
so we obtain 
\begin{equation*}
    K(pb + (1 - p)b') \leq \frac{2(\sqrt{1 + 4c_0} + 1)(F(\vx_0) - F(\vx^*))}{\epsilon^2}2b' = \mathcal{O}\Big(\frac{(F(\vx_0 - \vx^*))\sqrt{n(\hat{L} + \tilde{L})}}{\epsilon^2}\Big), 
\end{equation*}
thus completing the proof.
\end{proof}

\onlineComplexity*
\begin{proof}
By the chosen parameters and Inequality~\eqref{ineq:rate}, we have 
\begin{equation*}
\begin{aligned}
\E\Big[{\dist}^2(\partial F(\hat{\vx}_K), \vzero)\Big] \leq \frac{4(F(\vx_0) - F(\vx^*))}{\eta K} + \frac{2(1-p)(n-b)\sigma^2}{pb(n-1) K} + \frac{4(n-b)\sigma^2}{b(n-1)} \leq \frac{\epsilon^2}{3} + \frac{4\sigma^2}{b} + \frac{4\sigma^2}{b} \leq \epsilon^2.
\end{aligned}
\end{equation*}
Further, same as in the proof of  Corollary~\ref{cor:finite-sum}, we have that the total number of arithmetic operations $M$ in $K$ iterations to obtain an $\epsilon$-accurate solution is 
\begin{equation*}
    \E[M] = \E\Big[\sum_{k = 0}^K m_k\Big] = \mathcal{O}(bd) + \E\Big[\sum_{k = 1}^K m_k\Big] = \mathcal{O}\Big(bd + K(pb + (1 - p)b')d\Big).
\end{equation*}
Since $0 < \eta \leq \frac{-1 + \sqrt{1 + 4c_0}}{2c_0}$, we have 
\begin{equation*}
K = \frac{12(F(\vx_0) - F(\vx^*))}{\epsilon^2\eta} + \frac{1}{2p} = \frac{24c_0(F(\vx_0) - F(\vx^*))}{\epsilon^2(-1 + \sqrt{1 + 4c_0})} + \frac{b + b'}{2b'} = \frac{6(\sqrt{4c_0 + 1} + 1)(F(\vx_0) - F(\vx^*))}{\epsilon^2} + \frac{b + b'}{2b'},
\end{equation*}
which leads to 
\begin{equation*}
    K(pb + (1 - p)b') = \Big(\frac{6(\sqrt{4c_0 + 1} + 1)(F(\vx_0) - F(\vx^*))}{\epsilon^2} + \frac{b + b'}{2b'}\Big)\frac{2bb'}{b + b'} \leq b + \frac{12b'(\sqrt{1 + 4c_0} + 1)(F(\vx_0) - F(\vx^*))}{\epsilon^2}. 
\end{equation*}
Since $\frac{p(n - b)}{b(n - 1)} \leq \frac{p}{b} = \frac{b'}{b(b + b')} \leq \frac{b}{b'(b + b')} = \frac{1 - p}{b'}$ with $b' = \sqrt{b}$ and $p = \frac{b'}{b + b'}$, we have 
\begin{equation*}
c_0 = \frac{2(1-p)\hat{L}}{pb'} + \hat{L} + 2\Big( \frac{p(n-b)}{b(n-1)}+\frac{1-p}{b'}\Big)\frac{\tilde{L}}{p} \leq \hat{L} + \frac{1 - p}{pb'}(2\hat{L} + 4\tilde{L}) \leq \hat{L} + \frac{b}{b'^2}(2\hat{L} + 4\tilde{L}) = 3\hat{L} + 4\tilde{L} = \mathcal{O}(\hat{L} + \tilde{L}).
\end{equation*}
Hence, we obtain 
\begin{equation*}
    K(pb + (1 - p)b') \leq b + \frac{12b'(\sqrt{1 + 4c_0} + 1)(F(\vx_0) - F(\vx^*))}{\epsilon^2} = \mathcal{O}\Big(b + \frac{(F(\vx_0 - \vx^*))\sqrt{b(\hat{L} + \tilde{L})}}{\epsilon^2}\Big), 
\end{equation*}
thus completing the proof.
\end{proof}

\thmccdvrpl*
\begin{proof}
By Lemma~\ref{lem:vr-grad-norm} and Lemma~\ref{lem:vr-descent}, we have 
\begin{equation*}
\begin{aligned}
    F(\vx_{k}) \le\;&  F(\vx_{k-1})  - \frac{1}{2} \big(\frac{1}{\eta} - 1 - \hat{L}\eta\big)v_k + \frac{\eta}{2}u_k - \frac{\eta}{4}s_k.
\end{aligned}
\end{equation*}
By the P{\L} condition, 
\begin{equation*}
   s_k \geq 2\mu(F(\vx) - F(\vx^*)), 
\end{equation*}
we obtain 
\begin{equation*}
\begin{aligned}
    \big(1 + \frac{\eta\mu}{2}\big)(F(\vx_k) - F(\vx^*)) \leq \;& F(\vx_{k - 1}) - F(\vx^*) - \frac{1}{2}\big(\frac{1}{\eta} - 1 - \hat{L}\eta\big)v_k + \frac{\eta}{2}u_k.
\end{aligned}
\end{equation*}
Denoting $r_k = F(\vx_k) - F(\vx^*)$ and taking expectation with respect to all randomness on both sides, and adding $\frac{\eta}{p} \times$~\eqref{eq:vr-bound} in Lemma~\ref{lem:vr}, we obtain 
\begin{equation*}
\begin{aligned}
    \big(1 + \frac{\eta\mu}{2}\big)\ee[r_k] + \frac{\eta(2 - p)}{2p}\ee[u_k] \leq \;& \ee[r_{k - 1}] - \Big[\frac{1}{2}\big(\frac{1}{\eta} - 1 - \hat{L}\eta\big) - \frac{2\eta\tilde{L}}{p}\big(\frac{p(n - b)}{b(n - 1)} + \frac{1 - p}{b'}\big)\Big]\ee[v_k] \\
    & + \frac{\eta(1 - p)}{p}\ee[u_{k - 1}] + \frac{2\eta\hat{L}(1 - p)}{pb'}\ee[v_{k - 1}] + \frac{2\eta(n - b)\sigma^2}{b(n - 1)}.
\end{aligned}
\end{equation*}
Note that when $0 < \eta \leq \frac{p}{\mu(1 - p)}$, we have $\frac{\eta(2 - p)}{2p} \geq \big(1 + \frac{\eta\mu}{2}\big)\frac{\eta(1 - p)}{p}$ and $1 + \frac{\eta\mu}{2} \leq \frac{1}{1 - p}$. On the other hand, denote $$c_0 = \hat{L} + \frac{4\hat{L}}{pb'} + \frac{4\tilde{L}}{p}\big(\frac{p(n - b)}{b(n - 1)} + \frac{1 - p}{b'}\big)$$ for simplicity, then if $\eta \leq \frac{-1 + \sqrt{1 + 4c_0}}{2c_0}$, 
we have 
\begin{equation*}
    \big(1 + \frac{\eta\mu}{2}\big)\frac{2\eta\hat{L}(1 - p)}{pb'} \leq \frac{2\eta\hat{L}}{pb'} \leq \frac{1}{2}\big(\frac{1}{\eta} - 1 - \hat{L}\eta\big) - \frac{2\eta\tilde{L}}{p}\big(\frac{p(n - b)}{b(n - 1)} + \frac{1 - p}{b'}\big). 
\end{equation*}
Hence, when choosing the stepsize $\eta$ such that 
\begin{equation*}
    0 < \eta \leq \min\Big\{\frac{p}{\mu(1 - p)}, \frac{-1 + \sqrt{1 + 4c_0}}{2c_0}\Big\}, 
\end{equation*}
we have 
\begin{equation*}
\begin{aligned}
    \;& \big(1 + \frac{\eta\mu}{2}\big)\Big[\ee[r_k] + \frac{\eta(1 - p)}{p}\ee[u_k] + \frac{2\eta\hat{L}(1 - p)}{pb'}\ee[v_k]\Big] \\
    \leq \;& \big(1 + \frac{\eta\mu}{2}\big)\ee[r_k] + \frac{\eta(2 - p)}{2p}\ee[u_k] + \Big[\frac{1}{2}\big(\frac{1}{\eta} - 1 - \hat{L}\eta\big) - \frac{2\eta\tilde{L}}{p}\big(\frac{p(n - b)}{b(n - 1)} + \frac{1 - p}{b'}\big)\Big]\ee[v_k] \\
    \leq \;& \ee[r_{k - 1}] + \frac{\eta(1 - p)}{p}\ee[u_{k - 1}] + \frac{2\eta\hat{L}(1 - p)}{pb'}\ee[v_{k - 1}] + \frac{2\eta(n - b)\sigma^2}{b(n - 1)}.
\end{aligned}
\end{equation*}
Let $\Phi_k = r_k + \frac{\eta(1 - p)}{p}u_k + \frac{2\eta\hat{L}(1 - p)}{pb'}v_k$, then we obtain 
\begin{equation*}
    \ee[\Phi_k] \leq \big(1 + \frac{\eta\mu}{2}\big)^{-1}\ee[\Phi_{k - 1}] + \big(1 + \frac{\eta\mu}{2}\big)^{-1}\frac{2\eta(n - b)\sigma^2}{b(n - 1)}.
\end{equation*}
Telescoping from $1$ to $K$, we have 
\begin{equation*}
    \ee[\Phi_K] \leq \big(1 + \frac{\eta\mu}{2}\big)^{-K}\ee[\Phi_0] + \frac{4(n - b)\sigma^2}{b\mu(n - 1)}.
\end{equation*}
As $u_K \geq 0$, $v_K \geq 0$, $\E[u_0] = \E\Big[\sum_{j = 1}^m\|\nabla^j f(\vx_{-1, j}) - g_{-1}^j\|_{\mLambda_j^{-1}}^2\Big] \leq \frac{\sigma^2(n - b)}{b(n - 1)}$ and $v_0 = \|\vx_0 - \vx_{-1}\|_\mLambda^2 = 0$, we have 
\begin{equation}\label{eq:vr-rate-pl}
    \ee[F(\vx_K) - F(\vx^*)] \leq \big(1 + \frac{\eta\mu}{2}\big)^{-K}(F(\vx_0) - F(\vx^*)) + \big(1 + \frac{\eta\mu}{2}\big)^{-K}\frac{\sigma^2\eta(1 - p)(n - b)}{pb(n - 1)} + \frac{4(n - b)\sigma^2}{b\mu(n - 1)}, 
\end{equation}
thus completing the proof.
\end{proof}
% In the following, we further provide arithmetic operation complexity results under P{\L} condition with specifying the parameters of the algorithm for both finite-sum and infinite-sum problems, which we do not include in the main paper due to space constraints. 
% \begin{corollary}[Finite-sum]
% Choosing $b = n$, $b' = \sqrt{n}$, and $p = \frac{b'}{b + b'}$, and setting $K = \big(1 + \frac{2}{\eta\mu}\big)\log\big(\frac{\Delta_0}{\epsilon}\big)$, we have 
% $
% \E[F(\vx_K) - F(\vx^*)] \leq \epsilon
% $
% with $\mathcal{O}\Big(nd + \big(\frac{\sqrt{n(\hat{L} + \tilde{L})}}{\mu} + n\big)d\log\big(\frac{\Delta_0}{\epsilon}\big)\Big)$ arithmetic operations, where $\Delta_0 = F(\vx_0) - F(\vx^*)$.
% %
% \end{corollary}
\finiteComplexityPL*
\begin{proof}
By the chosen parameters and Inequality~\eqref{eq:vr-rate-pl}, we have 
\begin{equation*}
    \begin{aligned}
    \ee[F(\vx_K) - F(\vx^*)] \leq \;& \big(1 + \frac{\eta\mu}{2}\big)^{-K}(F(\vx_0) - F(\vx^*)) + \big(1 + \frac{\eta\mu}{2}\big)^{-K}\frac{\sigma^2\eta(1 - p)(n - b)}{pb(n - 1)} + \frac{4(n - b)\sigma^2}{b\mu(n - 1)} \\
    \leq \;& \exp\Big(-\frac{n\mu}{2 + \eta\mu}K\Big)(F(\vx_0) - F(\vx^*)) \\
    \leq \;& \epsilon.
    \end{aligned}
\end{equation*}
Proceeding same as in the proof for Corollary~\ref{cor:finite-sum}, we have the total number of arithmetic operations $M$ in $K$ iterations to obtain an $\epsilon$-accurate solution is 
\begin{equation*}
    \E[M] = \E\Big[\sum_{k = 0}^K m_k\Big] = \mathcal{O}(bd) + \E\Big[\sum_{k = 1}^K m_k\Big] = \mathcal{O}\Big(bd + K(pb + (1 - p)b')d\Big).
\end{equation*}
Since $0 < \eta \leq \min\Big\{\frac{p}{\mu(1 - p)}, \frac{-1 + \sqrt{1 + 4c_0}}{2c_0}\Big\}$, we can bound $K$ by 
\begin{equation*}
    K = \Big(1 + \frac{2}{\eta\mu}\Big)\log\Big(\frac{F(\vx_0) - F(\vx^*)}{\epsilon}\Big) \leq \Big(1 + \frac{\sqrt{4c_0 + 1} + 1}{\mu} + \frac{2(1 - p)}{p}\Big)\log\Big(\frac{F(\vx_0) - F(\vx^*)}{\epsilon}\Big), 
\end{equation*}
which leads to 
\begin{equation*}
\begin{aligned}
    K(pb + (1 - p)b') \leq \;& \Big(1 + \frac{\sqrt{4c_0 + 1} + 1}{\mu} + \frac{2(1 - p)}{p}\Big)\log\Big(\frac{F(\vx_0) - F(\vx^*)}{\epsilon}\Big)\frac{2bb'}{b + b'} \\
    \leq \;& \Big(2b' + 2b'\frac{\sqrt{4c_0 + 1} + 1}{\mu} + 4b\Big)\log\Big(\frac{F(\vx_0) - F(\vx^*)}{\epsilon}\Big).
\end{aligned}
\end{equation*}
Notice that 
\begin{equation*}
    c_0 = \hat{L} + \frac{4\hat{L}}{pb'} + \frac{4\tilde{L}}{p}\big(\frac{p(n - b)}{b(n - 1)} + \frac{1 - p}{b'}\big) \leq \hat{L} + \frac{4}{pb'}(\hat{L} + \tilde{L}) = \hat{L} + \frac{4(b + b')}{b'^2}(\hat{L} + \tilde{L}) \leq \hat{L} + 8(\hat{L} + \tilde{L}) = \mathcal{O}(\hat{L} + \tilde{L}), 
\end{equation*}
so we obtain 
\begin{equation*}
    K(pb + (1 - p)b') \leq \Big(2b' + 2b'\frac{\sqrt{4c_0 + 1} + 1}{\mu} + 4b\Big)\log\Big(\frac{F(\vx_0) - F(\vx^*)}{\epsilon}\Big) = \mathcal{O}\Big(\big(\frac{\sqrt{n(\hat{L} + \tilde{L})}}{\mu} + n\big)\log\big(\frac{F(\vx_0) - F(\vx^*)}{\epsilon}\big)\Big), 
\end{equation*}
thus completing the proof. 
\end{proof}

% \begin{corollary}[Infinite-sum]
% Choosing $b = \min\{\lceil\frac{12\sigma^2}{\mu\epsilon}\rceil, n\}$, $b' = \sqrt{b}$, and $p = \frac{b'}{b + b'}$, and setting $K = \big(1 + \frac{2}{\eta\mu}\big)\log\big(\frac{3\Delta_0}{\epsilon}\big)$, we have 
% $
% \E[F(\vx_K) - F(\vx^*)] \leq \epsilon
% $
% with $\mathcal{O}\Big(bd + \big(\frac{\sqrt{b(\hat{L} + \tilde{L})}}{\mu} + b\big)d\log\big(\frac{\Delta_0}{\epsilon}\big)\Big)$ arithmetic operations, where $\Delta_0 = F(\vx_0) - F(\vx^*)$.
% % 
% \end{corollary}
\onlineComplexityPL*
\begin{proof}
By the chosen parameters and Inequality~\eqref{eq:vr-rate-pl}, we have 
\begin{equation*}
    \begin{aligned}
    \ee[F(\vx_K) - F(\vx^*)] \leq \;& \big(1 + \frac{\eta\mu}{2}\big)^{-K}(F(\vx_0) - F(\vx^*)) + \big(1 + \frac{\eta\mu}{2}\big)^{-K}\frac{\sigma^2\eta(1 - p)(n - b)}{pb(n - 1)} + \frac{4(n - b)\sigma^2}{b\mu(n - 1)} \\
    \leq \;& \exp\Big(-\frac{n\mu}{2 + \eta\mu}K\Big)(F(\vx_0) - F(\vx^*)) + \exp\Big(-\frac{n\mu}{2 + \eta\mu}K\Big)\frac{\sigma^2\eta(1 - p)(n - b)}{pb(n - 1)} + \frac{4(n - b)\sigma^2}{b\mu(n - 1)} \\
    \overset{(\romannumeral1)}{\leq} \;& \frac{\epsilon}{3} + \frac{\epsilon}{3} + \frac{\epsilon}{3} = \epsilon, 
    \end{aligned}
\end{equation*}
where for $(\romannumeral1)$ we use $\eta \leq \frac{p}{\mu(1 - p)}$, thus 
\begin{equation*}
    \exp\Big(-\frac{n\mu}{2 + \eta\mu}K\Big)\frac{\sigma^2\eta(1 - p)(n - b)}{pb(n - 1)} \leq \frac{\sigma^2\eta(1 - p)(n - b)}{pb(n - 1)} \leq \frac{\eta(1 - p)\mu\epsilon}{12p} \leq \frac{\epsilon}{12}.
\end{equation*}
With the same process as in the proof for Corollary~\ref{cor:finite-sum}, we have the total number of arithmetic operations $M$ in $K$ iterations to obtain an $\epsilon$-accurate solution is 
\begin{equation*}
    \E[M] = \E\Big[\sum_{k = 0}^K m_k\Big] = \mathcal{O}(bd) + \E\Big[\sum_{k = 1}^K m_k\Big] = \mathcal{O}\Big(bd + K(pb + (1 - p)b')d\Big).
\end{equation*}
Since $0 < \eta \leq \min\Big\{\frac{p}{\mu(1 - p)}, \frac{-1 + \sqrt{1 + 4c_0}}{2c_0}\Big\}$, we can bound $K$ by 
\begin{equation*}
    K = \Big(1 + \frac{2}{\eta\mu}\Big)\log\Big(\frac{3(F(\vx_0) - F(\vx^*))}{\epsilon}\Big) \leq \Big(1 + \frac{\sqrt{4c_0 + 1} + 1}{\mu} + \frac{2(1 - p)}{p}\Big)\log\Big(\frac{3(F(\vx_0) - F(\vx^*))}{\epsilon}\Big), 
\end{equation*}
which leads to 
\begin{equation*}
\begin{aligned}
    K(pb + (1 - p)b') \leq \;& \Big(1 + \frac{\sqrt{4c_0 + 1} + 1}{\mu} + \frac{2(1 - p)}{p}\Big)\log\Big(\frac{3(F(\vx_0) - F(\vx^*))}{\epsilon}\Big)\frac{2bb'}{b + b'} \\
    \leq \;& \Big(2b' + 2b'\frac{\sqrt{4c_0 + 1} + 1}{\mu} + 4b\Big)\log\Big(\frac{3(F(\vx_0) - F(\vx^*))}{\epsilon}\Big).
\end{aligned}
\end{equation*}
Notice that $\frac{p(n - b)}{b(n - 1)} \leq \frac{p}{b} = \frac{b'}{b(b + b')} \leq \frac{b}{b'(b + b')} = \frac{1 - p}{b'}$, and thus 
\begin{equation*}
    c_0 = \hat{L} + \frac{4\hat{L}}{pb'} + \frac{4\tilde{L}}{p}\big(\frac{p(n - b)}{b(n - 1)} + \frac{1 - p}{b'}\big) \leq \hat{L} + \frac{4}{pb'}(\hat{L} + 2\tilde{L}) = \hat{L} + \frac{4(b + b')}{b'^2}(\hat{L} + 2\tilde{L}) = \hat{L} + 8(\hat{L} + 2\tilde{L}) = \mathcal{O}(\hat{L} + \tilde{L}). 
\end{equation*}
Hence, we obtain 
\begin{equation*}
    K(pb + (1 - p)b') \leq \Big(2b' + 2b'\frac{\sqrt{4c_0 + 1} + 1}{\mu} + 4b\Big)\log\Big(\frac{3(F(\vx_0) - F(\vx^*))}{\epsilon}\Big) = \mathcal{O}\Big(\big(\frac{\sqrt{b(\hat{L} + \tilde{L})}}{\mu} + b\big)\log\big(\frac{\Delta_0}{\epsilon}\big)\Big), 
\end{equation*}
thus completing the proof. 
\end{proof}

\section{Additional Experiments and Discussion}\label{appx:exp}
% \subsection{Non-Negative Tucker Decomposition}
% \subsection{Training Deep Neural Networks}
We first present the LeNet architectures used in our experiments for MNIST and CIFAR-10 datasets as in Fig.~\ref{fig:lenet}.
\begin{figure*}[ht]
    % \centering
    \hspace*{\fill}
    \includegraphics[width=0.4\textwidth]{./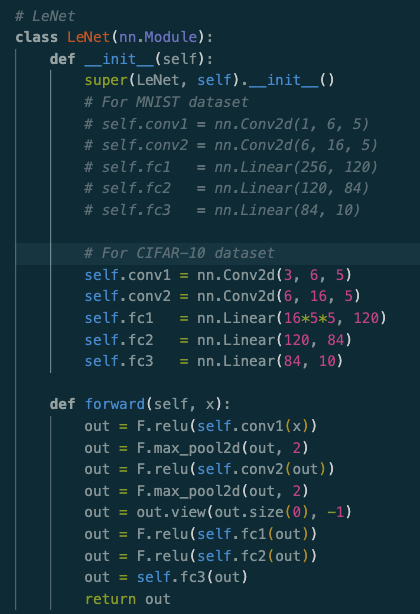}
    \hspace*{\fill}
    \caption{LeNet architectures used for MNIST and CIFAR-10 datasets.
    }
    \label{fig:lenet}
\end{figure*}

We then re-plot the train loss and test accuracy in Fig.~\ref{fig:cifar} against wall-clock time based on the average runtime per epoch of each algorithm in Table~\ref{table:time}, as is shown in Fig.~\ref{fig:cifar-time} below. We observe that (i) SCCD still converges faster to solutions with better generalization, in comparison with SGD and PAGE (whether spectral normalized or not); (ii) VRO-CCD converges slower in terms of wall-clock time, which is due to that cyclic updates become major computation bottleneck using small batch size. Some other causes can be sampling $p$ and additional batch operations to form $\gB$ and $\gB'$ in each iteration for PAGE estimator.
\begin{figure*}[ht]
    % \centering
    \hspace*{\fill}\subfigure[Train Loss]{\includegraphics[width=0.35\textwidth]{./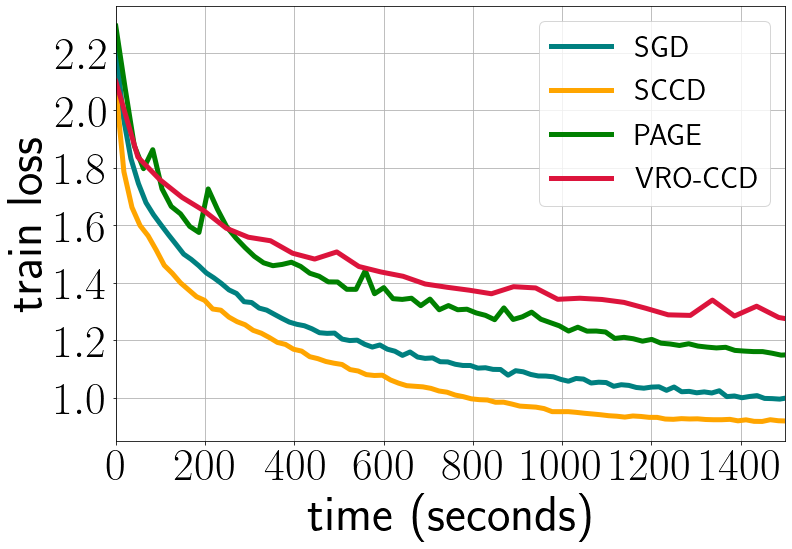}\label{fig:train-loss-time}}\hfill
    % \subfigure[Test loss of LeNet on MNIST dataset]{\includegraphics[width=0.4\textwidth]{AISTATS/test_loss_mnist.png}\label{fig:test-loss}}\hspace*{\fill}\\
    % \hspace*{\fill}\subfigure[Training accuracy of LeNet on MNIST dataset]{\includegraphics[width=0.4\textwidth]{AISTATS/train_accuracy_mnist.png}\label{fig:train-acc}}\hfill
    \subfigure[Test Accuracy]{\includegraphics[width=0.35\textwidth]{./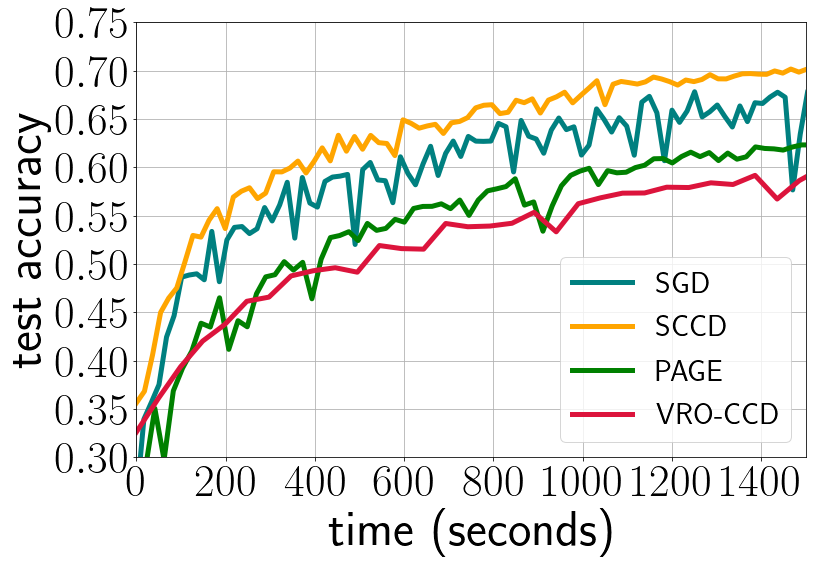}\label{fig:test-acc-time}}\hspace*{\fill}\\
    \hspace*{\fill}\subfigure[Train Loss (SN)]{\includegraphics[width=0.35\textwidth]{./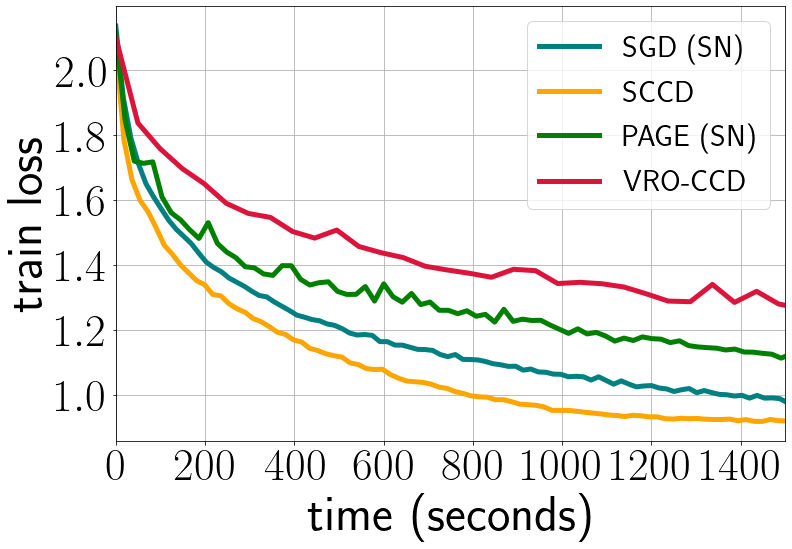}\label{fig:train-loss-sn-time}}\hfill
    \subfigure[Test Accuracy (SN)]{\includegraphics[width=0.35\textwidth]{./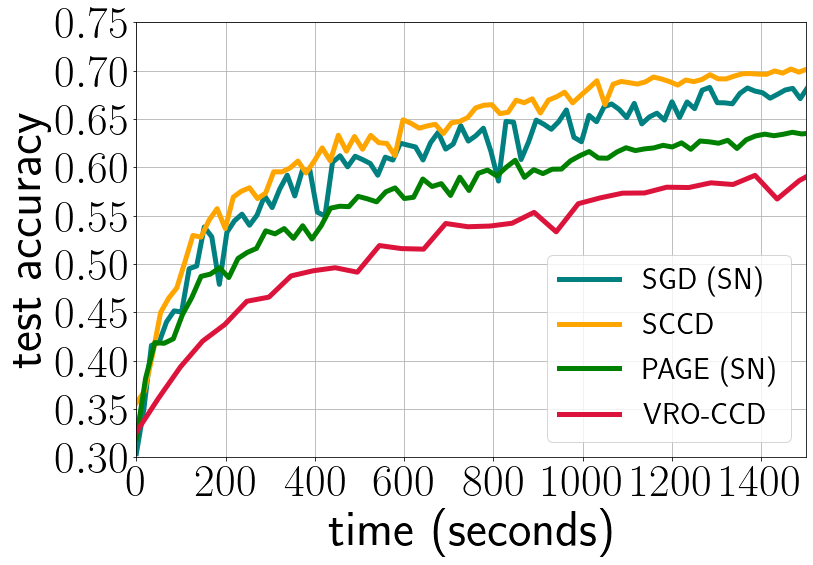}\label{fig:test-acc-sn-time}}
    \hspace*{\fill}
    \caption{Comparison of SGD, SCCD, PAGE and VRO-CCD in wall-clock time on training LeNet on CIFAR-10.}
    \label{fig:cifar-time}
\end{figure*}

We also compare the empirical performance of VR-CCD and VRO-CCD on the MNIST and CIFAR-10 datasets in Fig.~\ref{fig:vr-compare}, in one run and without spectral normalization. We set batch size $b = 64$ for the MNIST dataset and $b = 256$ for the CIFAR-10 dataset, and run for $200$ epochs. We still use the cosine learning rate scheduler, which is tuned for VR-CCD and VRO-CCD separately. We can see that (i) VR-CCD and VRO-CCD exhibits similar performance on the MNIST dataset; (ii) the empirical convergence of VR-CCD is slower than VRO-CCD's on the CIFAR-10 dataset, due to the smaller learning rate used for VR-CCD. We remark that separate sampling for each block may lead to numerical instability of the algorithm, thus requiring smaller learning rate for more complicated problems. Also, separate sampling for each block increases the sample complexity and introduces additional computational cost.
\begin{figure*}[htb]
    % \centering
    \hspace*{\fill}\subfigure[Train Loss (MNIST)]{\includegraphics[width=0.35\textwidth]{./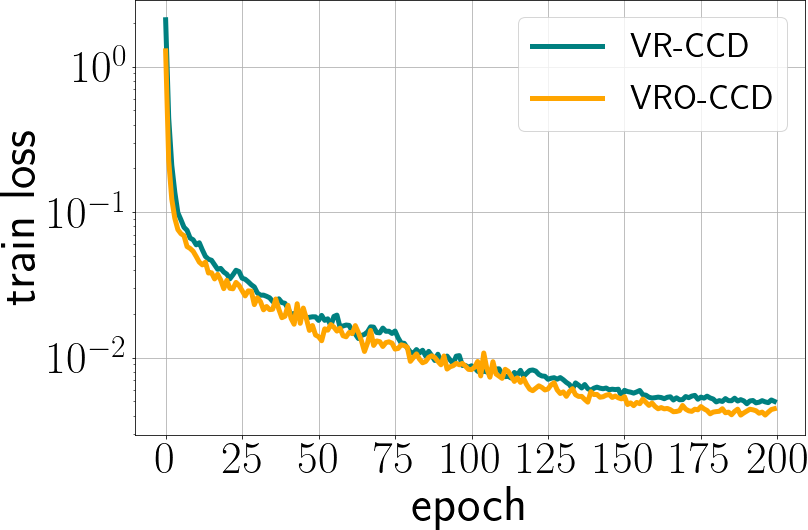}\label{fig:train-loss-mnist}}\hfill
    % \subfigure[Test loss of LeNet on MNIST dataset]{\includegraphics[width=0.4\textwidth]{AISTATS/test_loss_mnist.png}\label{fig:test-loss}}\hspace*{\fill}\\
    % \hspace*{\fill}\subfigure[Training accuracy of LeNet on MNIST dataset]{\includegraphics[width=0.4\textwidth]{AISTATS/train_accuracy_mnist.png}\label{fig:train-acc}}\hfill
    \subfigure[Test Accuracy (MNIST)]{\includegraphics[width=0.35\textwidth]{./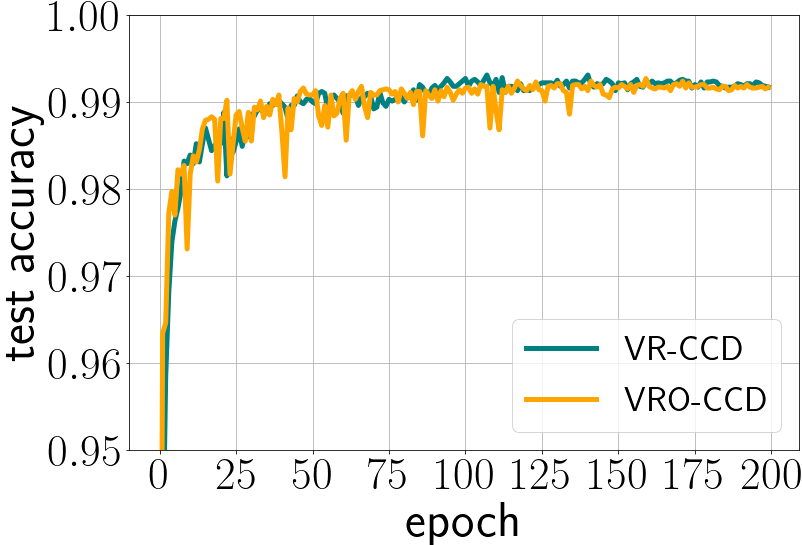}\label{fig:test-acc-mnist}}\hspace*{\fill}\\
    \hspace*{\fill}\subfigure[Train Loss (CIFAR-10)]{\includegraphics[width=0.35\textwidth]{./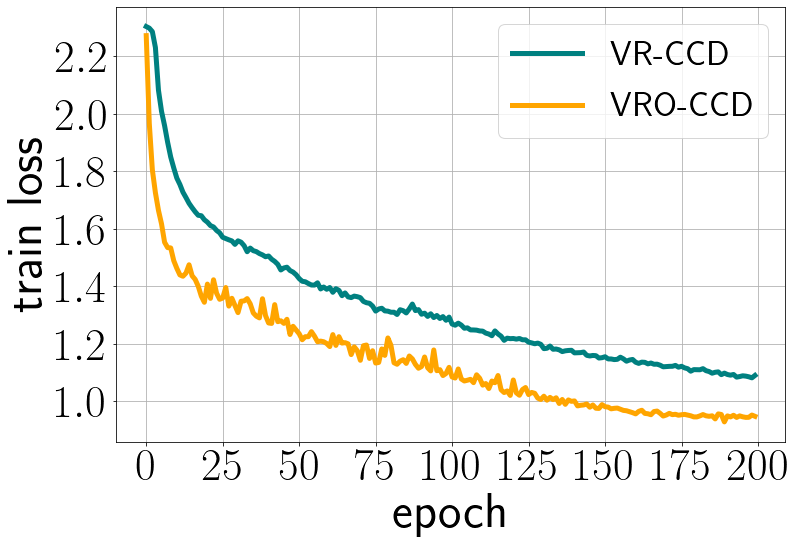}\label{fig:train-loss-cifar}}\hfill
    \subfigure[Test Accuracy (CIFAR-10)]{\includegraphics[width=0.35\textwidth]{./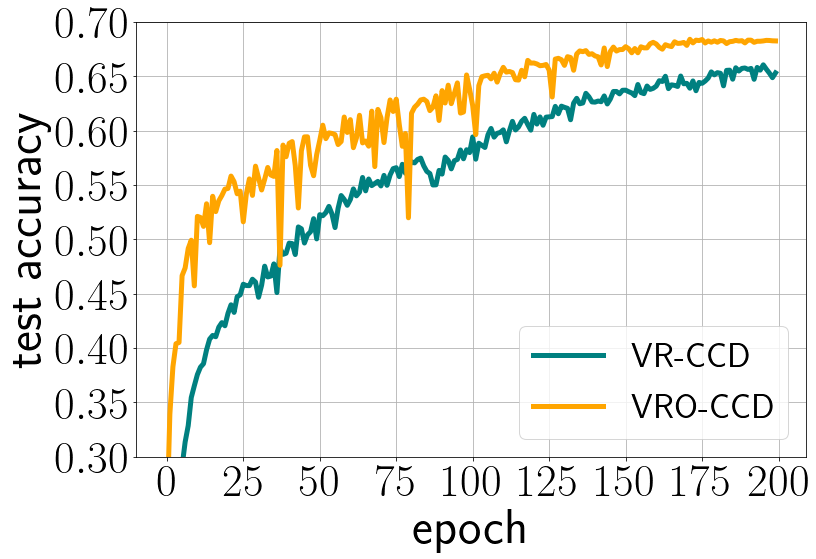}\label{fig:test-acc-cifar}}
    \hspace*{\fill}
    \caption{Comparison of VR-CCD and VRO-CCD on training LeNet on MNIST and CIFAR-10.}
    \label{fig:vr-compare}
\end{figure*}

Finally, we provide a further comparison between SCCD, VRO-CCD, SGD and PAGE algorithms with the same learning rate to illustrate the efficacy of variance reduction methods, motivated by the experimental setup in~\citet{li2020page}. We use the initial learning rate $\eta_{\text{ini}} = 0.01$ in cosine scheduler for all the algorithms except PAGE without spectral normalization which only admits $\eta_{\text{ini}} = 0.005$ most. We can see that SCCD and VRO-CCD demonstrate better performance than SGD and PAGE do, respectively. Here SCCD stagnates earlier than SGD, because the learning rate with cosine scheduler is too small for each block in SCCD to make progress.
\begin{figure*}[htb]
    % \centering
    \hspace*{\fill}\subfigure[Train Loss]{\includegraphics[width=0.35\textwidth]{./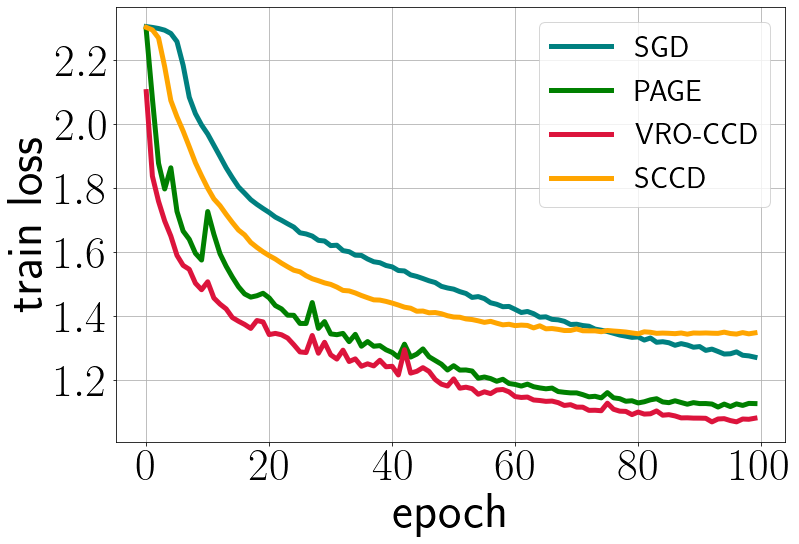}\label{fig:train-loss-same}}\hfill
    % \subfigure[Test loss of LeNet on MNIST dataset]{\includegraphics[width=0.4\textwidth]{AISTATS/test_loss_mnist.png}\label{fig:test-loss}}\hspace*{\fill}\\
    % \hspace*{\fill}\subfigure[Training accuracy of LeNet on MNIST dataset]{\includegraphics[width=0.4\textwidth]{AISTATS/train_accuracy_mnist.png}\label{fig:train-acc}}\hfill
    \subfigure[Test Accuracy]{\includegraphics[width=0.35\textwidth]{./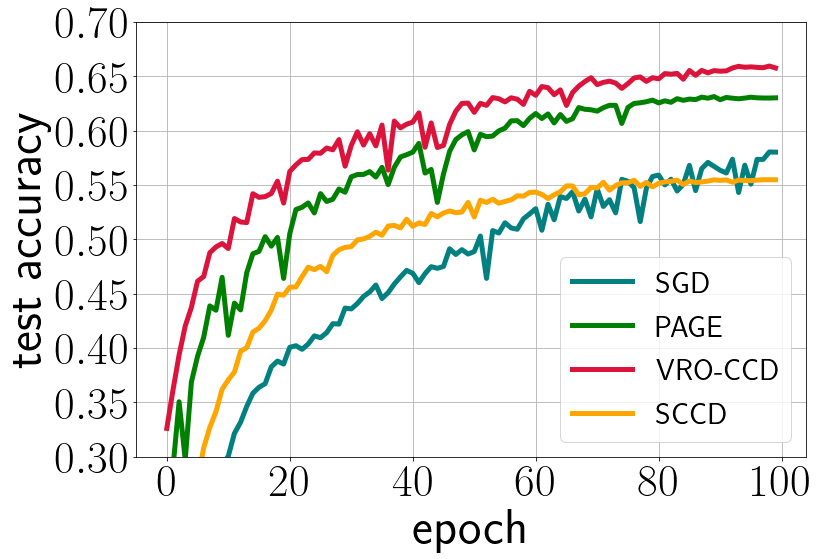}\label{fig:test-acc-same}}\hspace*{\fill}\\
    \hspace*{\fill}\subfigure[Train Loss (SN)]{\includegraphics[width=0.35\textwidth]{./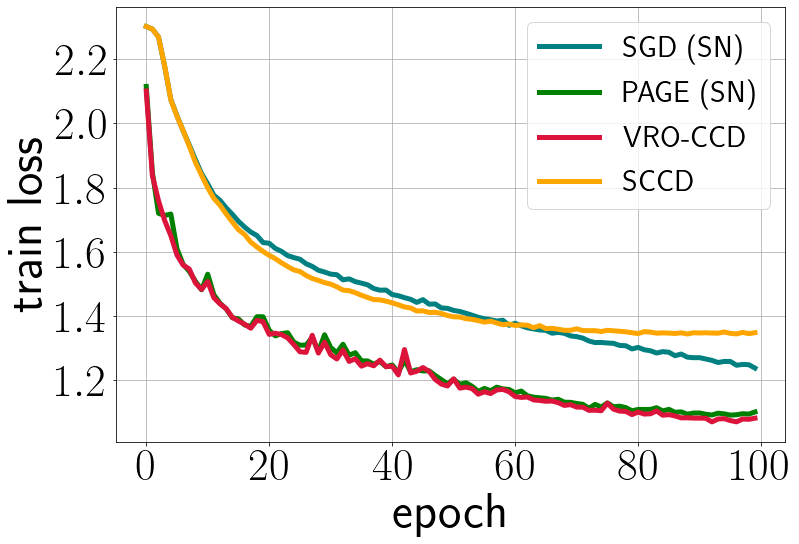}\label{fig:train-loss-sn-same}}\hfill
    \subfigure[Test Accuracy (SN)]{\includegraphics[width=0.35\textwidth]{./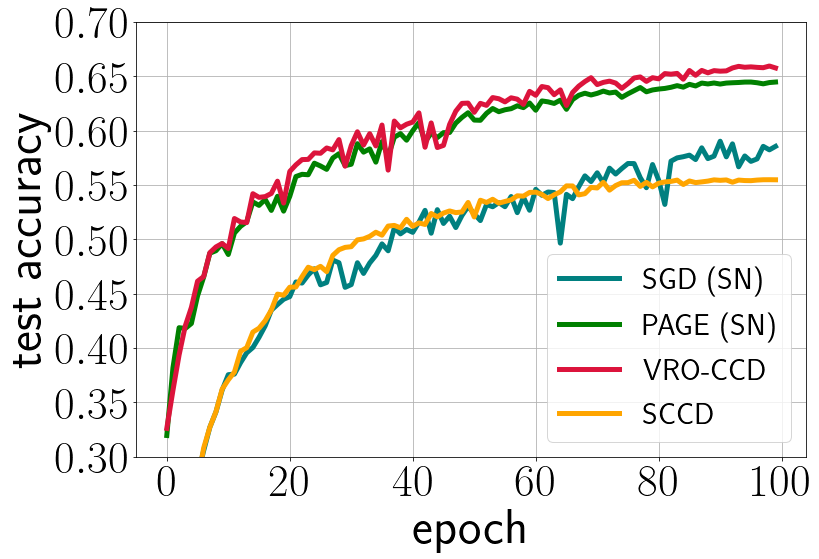}\label{fig:test-acc-sn-same}}
    \hspace*{\fill}
    \caption{Comparison of SGD, SCCD, PAGE and VRO-CCD with same learning rate on training LeNet on CIFAR-10.}
    \label{fig:cifar-same}
\end{figure*}

\end{document}